\newtheorem{theorem}{Theorem}[section]
\newtheorem{proposition}[theorem]{Proposition}
\newtheorem{corollary}[theorem]{Corollary}
\newtheorem{lemma}[theorem]{Lemma}
\newtheorem{remark}[theorem]{Remark}
\newtheorem{assumption}[theorem]{Assumption}
\DeclareMathOperator\dist{dist}
\begin{document}

\title{Large Deviation Principle for Poisson driven SDEs in Epidemic Models.}
\author{
{Etienne Pardoux}\footnote{{Aix Marseille Universit\'e, CNRS, Centrale Marseille, I2M UMR 7373, 13453
Marseille, France; etienne.pardoux@univ-amu.fr; brice.samegni-kepgnou@univ-amu.fr.}}
\and
{Brice Samegni-Kepgnou}\setcounter{footnote}{6}$^\ast$
}

\maketitle

\begin{abstract} 
We consider a general class of epidemic models  obtained by applying the random time changes  of  \cite{ethier2009markov} to a collection of Poisson processes and we show the large deviation principle(LDP) for such models. We generalize to a more general situation the approach of followed by Dolgoashinnykh \cite{dolgoarshinnykhsample}  in the case of the SIR epidemic model. Thanks to an additional assumption which is satisfied in many examples, we simplify the recent work by P.Kratz and E.Pardoux \cite{Kratz2014}.
\end{abstract}

\vskip 3mm
\noindent{\bf Keywords: } Poisson process; Large deviation principle; Law of large number.
\vskip 3mm

\section*{Introduction}

In this paper, we are interested in a class of Poisson Models which arise in many fields such as chemical kinetics, ecological and epidemic models.  It is in fact a $d$ dimensional processes of the type
\begin{equation}\label{EqPoisson1}
Z^{N}(t):=Z^{N,z}(t):=\frac{[Nz]}{N} +\frac{1}{N} \sum_{j=1}^k h_{j} P_{j}\Big(\int_{0}^{t} N \beta_{j}(Z^N(s)) ds \Big).
\end{equation}
The components of the vector $Z^N(t)$ are the proportions of the population in the various compartments, and
 $(P_{j})_{1\le j\le k}$ are i.i.d. standard Poisson processes. The $h_{j}\in \mathds{Z}^{d}$ denote the $k$ distinct jump directions with jump rates $\beta_{j}(z)$ and $z\in A$,
where
\begin{equation}\label{defA}
 A=\Big\{z\in\mathbb{R}_{+}^{d}:\sum_{i=1}^{d}z_{i}\leq1\Big\}
\end{equation}
is the domain of the processes defined by \eqref{EqPoisson1}.

As we shall recall below, it is plain that under mild assumptions, as $N\to\infty$, $Z^{N}_{t}\to Y_{t}$ a.s., locally uniformly for $t>0$,
where $Y_t$ solves the ODE
\begin{equation*}
 \frac{dY_t}{dt}=b(Y_{t}), Y_{0}=z,
 \end{equation*}
where $b(z)=\sum_{j=1}^{k}\beta_{j}(z)h_{j}$. In this paper we want to investigate the large deviations from this law of large numbers.

Let us now be more precise about the initial condition $Z^{N}(0)=[Nz]/N$. In the models we have in mind, since each component of $Z^{N}(t)$ is a proportion in a population of total population size equal to $N$, we want $Z^N(t)$ to take its values in the set 
$A^{(N)}=\{z\in A,\ Nz\in\mathbb{Z}_+^d\}$. In particular, we want the initial condition $Z^{N}(0)$ to belong to this set $A^{(N)}$. If that is not the case, 
some of the components of the vector $Z^{N}(t)$ may become negative, while jumping from $a/N$ to $(a-1)/N$, $0<a<1$, which is not very natural. For that reason, we will use the following convention concerning the initial condition. We assume that there exists $z\in A$ such that for $1\le i\le d$, $N\ge1$,  $Z^{N}_{i}(0)=[Nz_{i}]/N$.

In all what follows, $D_{T,A}$ denotes the set of functions from $[0,T]$ into $A$ which are right continuous and have left limits and let $\mathcal{AC}_{T,A}$  be the subspace of absolutely continuous functions.  

We denote by $\mathcal{B}$  the Borel $\sigma$-field on $D_{T,A}$ and $\mathbb{P}^{N}_{z}$ the probability measure on paths whose initial condition is given by $Z^{N}(0)=[Nz]/N$ defined by 
 \begin{equation*}
 \mathbb{P}^{N}_{z}(B)=\mathbb{P}_{z}(Z^{N}\in B)\quad\forall B\in\mathcal{B}.
\end{equation*}
Our goal is to show that the probability measures $\mathbb{P}^{N}_{z}$, $N>1$, satisfy a large deviation principle with a good rate function $I_{T}$ that we define in subsection \ref{ratefunction}. In other words for any  $G$ open subset of $D_{T,A}$ and $F$ closed subset of $D_{T,A}$ we want to show the following inequalities:
\begin{align}
   -\inf_{\phi\in G}I_{T}(\phi) &\leq \liminf_{N\to\infty} \frac{1}{N}\log\mathbb{P}_{z}(Z^{N}\in G),\label{lbound}  \\
   & \limsup_{N\to\infty} \frac{1}{N}\log\mathbb{P}_{z}(Z^{N}\in F) \le -\inf_{\phi\in F}I_{T}(\phi). \label{ubound}
\end{align}

Large deviation principles is the subject of many treatises, see in particular \cite{Dembo2009}, \cite{Dupuis1997}, \cite{Feng2006}, \cite{Freidlin1998} and \cite{Shwartz1995}. Some of those books study large deviations for Poisson processes, like e.g. \cite{Shwartz1995}. However, in this treatise it is assumed that the rates of the Poisson processes are bounded away from zero, and hence their logarithms are bounded. The case of Poisson processes with vanishing rates is studied 
in \cite{shwartz2005large}. However their assumptions are not satisfied in our situation, as it is explained in  \cite{Kratz2014}. Our result have been already established in \cite{Kratz2014}. Our argument is simpler.
It is based upon an idea from \cite{dolgoarshinnykhsample} and forces us to add an assumption, which
is satisfied in all examples we have in mind.

That additional assumption is the following. We suppose that there exists a collection of mappings $\Phi_{a}:A\to A$, defined for each $a>0$, which are such that $z^{a}=\Phi_{a}(z)$ satisfies for each $a>0$
\begin{align*}
    |z-z^{a}|&\leq c_{1} a  \\
    \dist(z^{a},\partial A)&\geq c_{2}a  
\end{align*}
for some $0<\kappa_{2}<\kappa_{1}$. 
We now introduce the sets defined for all $a>0$ by
\begin{equation}\label{leB}
  B^{a}=\Big\{z\in A:\dist(z,\partial A)\geq c_{2}a\Big\}
\end{equation}
and
\begin{equation}\label{leR}
  R^{a}=\Big\{\phi\in\mathcal{AC}_{T,A}:\phi_{t}\in B^{a}\quad \forall t\in[0,T]\Big\}
\end{equation}
hence $\Phi_{a}$ maps $A$ into $B^{a}$.
\begin{remark}
Since our domain $A$ is convex, one can always define $\Phi_{a}=z+a(z_{0}-z)$, for some fixed $z_{0}\in \mathring{A}$. The same construction is possible for many non necessarily convex sets, provided $A$ is compact, and there is a point $z_{0}$ in its interior which is such that for each $z\in\partial A$, the segment joining $z_{0}$ and $z$  does not touch any other point of the boundary $\partial A$. We also note that for  such a choice of $\Phi_{a}$  and $A$ given by \eqref{defA} the constants $c_{1}$, $c_{1}$ can be defined by
\begin{align*}
c_{1}&=\sup_{z\in A}|z-z_{0}| \\
c_{2}&=\sin(\theta_{0})\inf_{z\in \partial A}|z-z_{0}|\leq\inf_{z\in \partial A}|z-z_{0}|\times\sin(\theta(z)).
\end{align*}
where $\theta(z)$ is the most acute angle between the boundary $\partial A$ and the vector $z_{0}-z$ and $\theta_{0}$ is a angle such that for all $z\in\partial A$, $\theta_{0}\leq\theta(z)\leq \pi/2$. For instance $\theta_{0}=\min_{1\leq\ell\leq6}\theta_{\ell}$.
\end{remark}

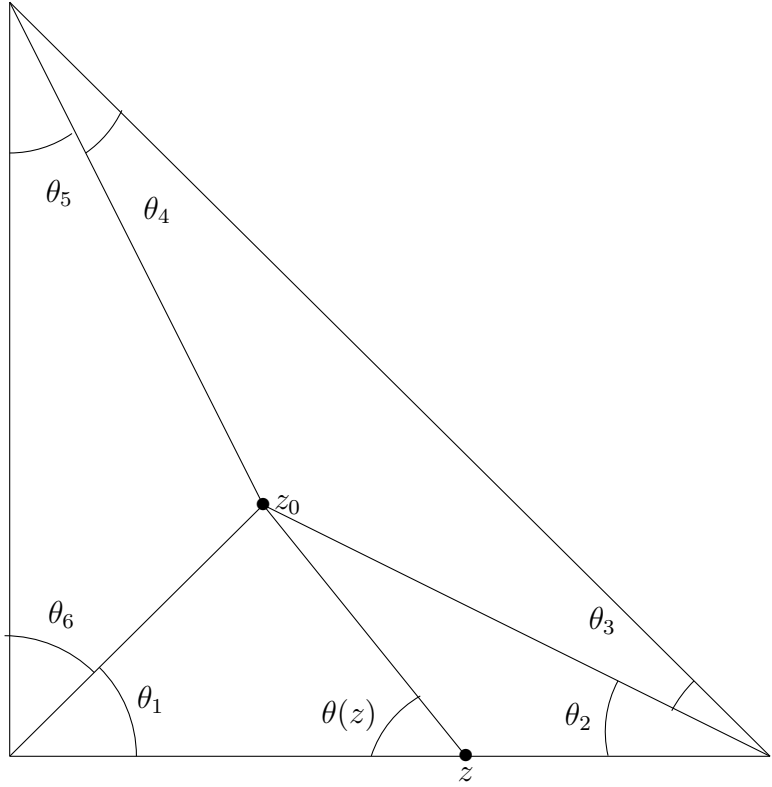
\begin{figure}
\begin{center}
\begin{tikzpicture}[scale=10]
\draw (0,0) -- (1,0);
\draw (0,0) -- (0,1);
\draw (1,0) -- (0,1);
\draw (0,0) -- (1/3,1/3);
\draw (1,0) -- (1/3,1/3);
\draw (0,1) -- (1/3,1/3);
\draw (1/3,1/3) node[right] {$z_{0}$};
\draw (1/3,1/3) node {$\bullet$};
\draw (1/6,0) arc (0:45:1/6) ;
\draw (1/9,1/9) arc (45:90:1/6); 
\draw (22:1/5) node {$\theta_{1}$};
\draw (70:1/5) node {$\theta_{6}$};
\draw (0.9,0.1) arc (135:152:1/6);
\draw (13:0.8) node {$\theta_{3}$};
\draw (0.8,0.1) arc (152:193:1/7);
\draw (4:0.75) node {$\theta_{2}$};
\draw (0,0.8) arc (270:305:1/7);
\draw (85:0.75) node {$\theta_{5}$};
\draw (0.1,0.8) arc (305:335:1/7);
\draw (75:0.75) node {$\theta_{4}$};
\draw (3/5,0) -- (1/3,1/3);
\draw (3/5,0) node[below] {$z$};
\draw (3/5,0) node {$\bullet$};
\draw (0.54,0.08) arc (120:162:1/7);
\draw (7:0.45) node {$\theta(z)$};
\end{tikzpicture}
\end{center}
\caption{Domain $A$}
\label{FigA}
\end{figure}

Moreover  for all $a>0$ we define
\begin{equation}\label{leC}
  C_{a}=\inf_{j}\inf_{z\in B^a}\beta_{j}(z).
\end{equation}
We remark that for all $a>0$,  $C_{a}>0$ and $\lim_{a\rightarrow 0}C_{a}=0$.

We make the following assumptions
\begin{assumption}\label{assump1}
\begin{enumerate}
 \item The rate functions $\beta_{j}$ are Lipschitz continuous with the Lipschitz constant equal to $C$.  \label{assump11}
 \item The $\beta_{j}$ are bounded by a positive constant $\sigma$. \label{assump12}
 \item There exist two constants $\lambda_{1}$ and $\lambda_{2}$ such that whenever $z\in A$ is such that $\beta_{j}(z)<\lambda_{1}$, $\beta_{j}(z^{a})>\beta_{j}(z)$ for all $a\in]0, \lambda_{2}[$ . \label{assump13}
 \item There exists constant $\nu\in]0,1/2[$  such that  \label{assump14}
 \begin{equation*}
  \lim_{a\rightarrow 0}a^{\nu}\log C_{a}=0.
 \end{equation*}
 This means in particular that there exists $a_{0}>0$ such that for all $a<a_{0}$, $C_{a}\geq e^{-a^{-\nu}}$.
 \end{enumerate}
\end{assumption}
Let us comment on Assumption \ref{assump1}. Assumption \ref{assump1}.\ref{assump11} is quite standard and ensures in particular that the ODE \eqref{ODE} admits a unique solution. For the compartmental epidemiological models we consider, this assumption is always true because the $\beta_{j}(z)$ are usually polynomials and $A$ is compact. Also the assumption  \ref{assump1}.\ref{assump12} is always true because the domain of our process is compact. Assumption  \ref{assump1}.\ref{assump13} will follow from the fact that close to the boundary, "small" rates are increasing when we follow a direction towards the inside of the domain. Concerning the assumption  \ref{assump1}.\ref{assump14}, such an assumption is true for the models we study because the rates are usually polynomials.

For all $\phi, \psi \in D_{T,A}$ we will define the distance between $\phi$ and $\psi$ by
\begin{equation*}
 \|\phi-\psi\|_{T}=\sup_{t\leq T}|\phi_{t}-\psi_{t}|
\end{equation*}
where $|.|$ denotes the Euclidean norm in $\mathbb{R}^{d}$. 

The remainder of this paper is structured as follows. In section 1, we formulate the law of large numbers, we define a good rate function for our large deviation principle and we establish some properties that it satisfies. The second section concerns the proof of the lower bound \eqref{lbound} and the third one the proof of the upper bound \eqref{ubound}. The last section of this paper states a result concerning the asymptotic behavior of the exit time from the domain of attraction of a stable point for the dynamical system \eqref{ODE} as well as the exponential asymptotic of its mean $\mathbb{E}_{z}(\tau^{N}_{O})$. For epidemic models, this exit time is the time of extinction of an endemic disease.

\section{Somes Important Results} \label{secresults}

\subsection{Law of Large Number and Change of Measure}

We now prove the law of large number.
\begin{theorem}\label{LLN}
Let $Z^{N,z}(t)$  the solution of stochastic differential equation Poissonian \eqref{EqPoisson1} with an initial condition $[Nz]/N$. Assume that  the assumption \ref{assump1}.\ref{assump11} holds.  Then 
\begin{equation*}
  \lim_{N\rightarrow \infty}\|Z^{N,z}-Y^{z}\|_{T}=0\quad\text{a.s}.
 \end{equation*}
 Where $Y^{z}(.)$ is the solution of the ODE
 \begin{equation}\label{ODE}
\frac{dY^{z}(t)}{dt}:=b(Y^{z}(t)) 
\end{equation}
with an initial condition $z$ and where
\[
b(z):=\sum_{j=1}^k \beta_{j}(z) h_{j}.
\]
\end{theorem}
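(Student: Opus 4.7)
The plan is to use the Kurtz martingale/random time-change representation combined with the strong law of large numbers for the standard Poisson process, followed by a Gronwall argument. First, I introduce the accumulated rates
\[
V_j^N(t) = \int_0^t \beta_j(Z^N(s))\,ds, \qquad 1 \le j \le k,
\]
which are uniformly bounded by $\sigma T$ on $[0,T]$ (here $\sigma$ is a bound for each $\beta_j$ on the compact set $A$, which follows from Assumption \ref{assump1}.\ref{assump11} together with compactness, without requiring Assumption \ref{assump1}.\ref{assump12} explicitly). Rewriting \eqref{EqPoisson1} as
\[
Z^N(t) = \frac{[Nz]}{N} + \sum_{j=1}^k h_j \left[\frac{1}{N}P_j\bigl(N V_j^N(t)\bigr) - V_j^N(t)\right] + \int_0^t b(Z^N(s))\,ds,
\]
I isolate a deterministic integral against $b$ and an ``error'' term built from the normalized fluctuations of the Poisson processes.

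Next, the classical strong law of large numbers for the standard Poisson process asserts $N^{-1}P_j(Nu) \to u$ almost surely, locally uniformly in $u \ge 0$ (for instance via Doob's inequality and Borel--Cantelli applied on a dense countable set, combined with monotonicity of $P_j$). Because $V_j^N(t) \in [0,\sigma T]$ for every $N$ and every $t \le T$, this yields
\[
\varepsilon^N := \sum_{j=1}^k |h_j| \sup_{t\le T}\left| \frac{1}{N}P_j\bigl(N V_j^N(t)\bigr) - V_j^N(t) \right| \xrightarrow[N\to\infty]{} 0 \quad \text{a.s.}
\]

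Finally, subtracting the ODE representation $Y^z(t) = z + \int_0^t b(Y^z(s))\,ds$ and using the fact that $b=\sum_j \beta_j h_j$ is Lipschitz (with constant $L = C\sum_j |h_j|$) by Assumption \ref{assump1}.\ref{assump11}, one obtains
\[
|Z^N(t) - Y^z(t)| \le \left|\frac{[Nz]}{N} - z\right| + \varepsilon^N + L \int_0^t |Z^N(s) - Y^z(s)|\,ds.
\]
Gronwall's lemma then gives $\|Z^{N,z} - Y^z\|_T \le (d/N + \varepsilon^N)e^{LT}$, and the right-hand side tends to zero almost surely. I do not anticipate a significant obstacle; the only delicate point is the uniform (rather than pointwise) SLLN for the Poisson process, which is standard and essential in order to pass the convergence through the random, data-dependent time changes $V_j^N$.
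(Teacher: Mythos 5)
Your proof is correct and follows essentially the same route as the paper: decompose $Z^N$ into the compensated Poisson fluctuations evaluated at the bounded random time changes $V_j^N(t)\in[0,\sigma T]$, apply the locally uniform strong law of large numbers for the standard Poisson processes, and conclude by Gronwall. The only cosmetic difference is that the paper justifies the uniform SLLN via Dini's second theorem rather than Doob/Borel--Cantelli, and it invokes the boundedness of the $\beta_j$ as a standing assumption rather than deriving it from Lipschitz continuity and compactness of $A$.
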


\begin{proof}
By using the Lipschitz continuity of  $b$, we have with $M_{j}(t)=P_{j}(t)-t$, $\widetilde{M}^{N}_{j}(t)=\frac{1}{N}P_{j}(N.t)-t$
\begin{align}\label{apllygron}
    & |Z^{N}(t)-Y^{z}(t)| \leq \Big|\frac{[Nz]}{N}-z\Big|+\int_{0}^{t}|b(Z^{N}(s))-b(Y^{z}(s))|ds+\frac{1}{N}\Big|\sum_{j=1}^{k}h_{j}M_{j}\Big(N\int_{0}^{t}\beta_{j}(Z^{N}(s))ds\Big)\Big| \nonumber \\
    &  \leq \Big|\frac{[Nz]}{N}-z\Big|+kC\sqrt{d}\int_{0}^{t}|Z^{N}(s)-Y^{z}(s)|ds+\sqrt{d}\sum_{j=1}^{k}\Big|\widetilde{M}^{N}_{j}\Big(\int_{0}^{t}\beta_{j}(Z^{N}(s))ds\Big)\Big| \nonumber \\
    & \leq \Big|\frac{[Nz]}{N}-z\Big|+kC\sqrt{d}\int_{0}^{t}|Z^{N}(s)-Y^{z}(s)|ds+k\sqrt{d}\sup_{j}\sup_{t\leq T}\Big|\widetilde{M}^{N}_{j}\Big(\int_{0}^{t}\beta_{j}(Z^{N}(s))ds\Big)\Big| 
\end{align}
Let $\xi^{N}_{j}(t)=\Big|\widetilde{M}^{N}_{j}\Big(\int_{0}^{t}\beta_{j}(Z^{N}(s))ds\Big)\Big|$. From the strong law of large numbers for a Poisson process, 
we have for all $j=1,...,k$
\begin{equation*}
\frac{P_{j}(Nt)}{N}\to t\quad a.s.\quad\text{as}\quad N\to\infty.
\end{equation*}
As we have pointwise convergence of a sequence of increasing function towards a continuous function we can use the second Dini theorem to conclude that this convergence is uniform on any compact time interval, hence for  $0\leq v<\infty$ and $j=1,...k$
\begin{equation*}
  \lim_{N\rightarrow \infty}\sup_{u\leq v}|\widetilde{M}^{N}_{j}(u)|=0\quad\text{a.s}.
 \end{equation*}

 As the $\beta_{j}$ are bounded by $\sigma$, it follows that
\begin{equation*}
  \lim_{N\rightarrow \infty}\sup_{t\leq T}\xi^{N}_{j}(t)=0\quad\text{a.s}.
 \end{equation*} 
 for $j=1,...,k$. \\ By using by Gronwall's inequality stated above we have
 \begin{equation*}
   |Z^{N}_{t}-Y^{z}_{t}| \leq k\sqrt{d}\Big(\Big|\frac{[Nz]}{N}-z\Big|+\sup_{j}\sup_{t\leq T}\xi^{N}_{j}(t)\Big)\exp\{kC\sqrt{d} t\}
 \end{equation*} 
 and the result follows.
 \end{proof}

We shall need the following Girsanov theorem . Let $Q$ equal to the random number of jumps of $Z^{N}$ in the interval $[0,T]$, $\tau_{p}$  be the time of the $p^{th}$  jump for $p=1,...,Q$ and define
\begin{equation*}
\delta_{p}(j)=
\begin{cases}
      1& \text{if the $p^{th}$ jump is in the direction $h_{j}$ }, \\
      0& \text{otherwise}.
\end{cases}
\end{equation*}
We shall denote $\mathcal{F}^{N}_{t}=\sigma\{Z^{N}(s), 0\leq s\leq t\}$. Consider another set of rates $\tilde{\beta}_{j}(z)$, 
$1\leq j\leq k$. Combining Theorem  VI T3 from \cite{bremaud} and Theorem 2.4 from \cite{sokol}, we have

\begin{theorem}\label{girsanov}
Let $\widetilde{\mathbb{P}}^{N}$ denote the law of $Z^{N}$ when the rates are rates $\tilde{\beta}_{j}(.)$.  Then provided that 
$\sup_{z\in A}\frac{\tilde{\beta}_j(z)}{\beta_j(z)}<\infty$, which implies in particular that $\{z:\beta_{j}(z)=0\}\subset\{z:\tilde{\beta}_{j}(z)=0\}$, on the $\sigma$-algebra $\mathcal{F}^{N}_{t}$, $\widetilde{\mathbb{P}}^{N}\big|_{\mathcal{F}^{N}_{T}}<<\mathbb{P}^{N}\big|_{\mathcal{F}^{N}_{T}}$, and
\begin{align}\label{likelihood}
\xi_{T}&=\xi^{N}_{T}= \frac{d\widetilde{\mathbb{P}}^{N}\big|_{\mathcal{F}^{N}_{T}}}{d\mathbb{P}^{N}\big|_{\mathcal{F}^{N}_{T}}}  \nonumber\\
&=\left(\prod_{p=1}^{Q}\prod_{j=1}^{k}\left[\frac{\tilde{\beta}_{j}(Z^{N}(\tau_{p}^{-}))}{\beta_{j}(Z^{N}(\tau_{p}^{-}))}\right]^{\delta_{p}(j)}\right)  \exp\Big\{N\sum_{j=1}^{k}\int_{0}^{T}(\beta_{j}(Z^{N}(t))-\tilde{\beta}_{j}(Z^{N}(t)))dt\Big\}.
\end{align}
\end{theorem}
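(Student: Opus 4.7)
The plan is to view $Z^N$ as a pure-jump Markov process driven by a multivariate point process and invoke the classical Girsanov theorem for point processes. Under $\mathbb{P}^N$, the counting process $N_j(t)=P_j\!\left(\int_0^t N\beta_j(Z^N(s))\,ds\right)$ of jumps in direction $h_j$ is, by the random-time-change construction of Ethier--Kurtz, adapted to $(\mathcal{F}^N_t)$ with $(\mathcal{F}^N_t)$-predictable intensity $\lambda_j^N(t)=N\beta_j(Z^N(t^-))$. One then wishes to change this intensity to $\tilde\lambda_j^N(t)=N\tilde\beta_j(Z^N(t^-))$, which is exactly the situation of Brémaud's Theorem VI T3.

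Concretely, I would first let
\[
L_t = \prod_{j=1}^k \prod_{p:\,\tau_p\le t}\!\left[\frac{\tilde\beta_j(Z^N(\tau_p^-))}{\beta_j(Z^N(\tau_p^-))}\right]^{\delta_p(j)} \exp\!\Big\{N\sum_{j=1}^k\int_0^t\bigl(\beta_j(Z^N(s))-\tilde\beta_j(Z^N(s))\bigr)ds\Big\},
\]
which is well defined thanks to the hypothesis $\sup_{z\in A}\tilde\beta_j(z)/\beta_j(z)<\infty$ (this makes the ratios in the product bounded and prevents division by zero on the support of the jumps). A direct application of Itô's formula for pure-jump processes, or equivalently the exponential-martingale formula of Brémaud, shows that $L$ is a non-negative local $\mathbb{P}^N$-martingale satisfying $L_0=1$.

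The non-trivial step is promoting $L_T$ from a local martingale to a bona fide Radon--Nikodym derivative, i.e., verifying $\mathbb{E}^{\mathbb{P}^N}[L_T]=1$ so that $d\widetilde{\mathbb{P}}^N=L_T\,d\mathbb{P}^N$ genuinely defines the law in which the intensities of the $N_j$'s become $\tilde\lambda_j^N$. This is the content of Sokol's Theorem 2.4: because the rates $\tilde\beta_j$ are bounded on $A$ (Assumption \ref{assump1}.\ref{assump12}) and the ratio $\tilde\beta_j/\beta_j$ is uniformly bounded, one obtains an a priori control of the total number of jumps under both measures which gives the uniform integrability required for $L$ to be a true martingale on $[0,T]$.

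The main obstacle is precisely this martingale property, since naive application of Girsanov for point processes only yields a local martingale and the rates $\beta_j$ may vanish on $\partial A$. The key leverage is the assumption $\sup_z\tilde\beta_j(z)/\beta_j(z)<\infty$, which ensures that $\{\beta_j=0\}\subset\{\tilde\beta_j=0\}$ so the ratio appearing in the product is never $0/0$ and is bounded, while Sokol's criterion handles the degeneracy near the boundary. Once these two facts are combined, the explicit expression \eqref{likelihood} for $\xi_T=L_T$ is read off, and the identification of $\widetilde{\mathbb{P}}^N$ with the law of $Z^N$ under rates $\tilde\beta_j$ follows because the change of intensity uniquely characterises the distribution of a multivariate point process with bounded intensities.
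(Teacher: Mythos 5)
Your proposal is correct and follows essentially the same route as the paper, which gives no independent proof but simply combines Br\'emaud's Theorem VI T3 (the Girsanov formula for point processes with the stated likelihood ratio) with Theorem 2.4 of Sokol--Hansen (to guarantee that the candidate density is a true martingale, hence a genuine Radon--Nikodym derivative). Your write-up actually spells out more of the logic --- identifying the predictable intensities $N\beta_j(Z^N(t^-))$, the role of the bound on $\tilde\beta_j/\beta_j$, and why uniform integrability is the only delicate point --- than the paper does.
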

\begin{corollary}\label{cogirsanov}
For all non-negative measurable function $X\geq0$,
\begin{equation*}
\mathbb{E}(X)\geq\tilde{\mathbb{E}}(\xi^{-1}_{T}X)
\end{equation*}
\end{corollary}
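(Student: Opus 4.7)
The plan is to unwrap the Radon–Nikodym relation stated in Theorem \ref{girsanov}, taking care to handle the possible null set where $\xi_T$ vanishes. Since $\widetilde{\mathbb{P}}^N\ll \mathbb{P}^N$ on $\mathcal{F}^N_T$ with density $\xi_T$, the basic change-of-variables identity $\widetilde{\mathbb{E}}(Y)=\mathbb{E}(\xi_T Y)$ holds for every nonnegative $\mathcal{F}^N_T$-measurable $Y$. The corollary is then a one-line calculation once one justifies plugging in $Y=\xi_T^{-1}X$.

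First I would record the observation that $\xi_T^{-1}$ is $\widetilde{\mathbb{P}}^N$-almost surely well defined: indeed
\begin{equation*}
\widetilde{\mathbb{P}}^N(\xi_T=0)=\mathbb{E}\bigl(\xi_T\mathbf{1}_{\{\xi_T=0\}}\bigr)=0,
\end{equation*}
so under $\widetilde{\mathbb{P}}^N$ we may identify $\xi_T^{-1}X$ with $\xi_T^{-1}X\mathbf{1}_{\{\xi_T>0\}}$ without changing the expectation on the left-hand side.

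Next I apply the change-of-variables identity to this nonnegative integrand:
\begin{equation*}
\widetilde{\mathbb{E}}(\xi_T^{-1}X)=\widetilde{\mathbb{E}}\bigl(\xi_T^{-1}X\mathbf{1}_{\{\xi_T>0\}}\bigr)=\mathbb{E}\bigl(\xi_T\cdot\xi_T^{-1}X\mathbf{1}_{\{\xi_T>0\}}\bigr)=\mathbb{E}\bigl(X\mathbf{1}_{\{\xi_T>0\}}\bigr).
\end{equation*}
Since $X\geq 0$ and $\mathbf{1}_{\{\xi_T>0\}}\leq 1$, the right-hand side is bounded above by $\mathbb{E}(X)$, which gives the claimed inequality.

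There is no real obstacle here; the only subtle point is the asymmetry between $\mathbb{P}^N$ and $\widetilde{\mathbb{P}}^N$. We only assume $\widetilde{\mathbb{P}}^N\ll\mathbb{P}^N$ (via the hypothesis $\sup_z\tilde{\beta}_j(z)/\beta_j(z)<\infty$), and not the converse, so the mass that $\mathbb{P}^N$ places on $\{\xi_T=0\}$ is precisely what prevents equality and forces the statement to be an inequality rather than an identity.
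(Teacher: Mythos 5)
Your proof is correct and is essentially the same argument as the paper's: both restrict to the event $\{\xi_T\neq 0\}$, apply the change-of-measure identity there, and use $\widetilde{\mathbb{P}}(\xi_T=0)=0$ to remove the indicator on the $\widetilde{\mathbb{E}}$ side; you merely run the chain of (in)equalities in the reverse direction and spell out why $\widetilde{\mathbb{P}}(\xi_T=0)=0$.
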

\begin{proof}
As $X\geq0$, we write
\begin{equation*}
\mathbb{E}(X)\geq\mathbb{E}(X\mathbf{1}_{\{\xi_{T}\neq0\}})=\tilde{\mathbb{E}}(\xi^{-1}_{T}X\mathbf{1}_{\{\xi_{T}\neq0\}})=\tilde{\mathbb{E}}(\xi^{-1}_{T}X).
\end{equation*}
This last equality comes from the fact that $\widetilde{\mathbb{P}}(\xi_{T}=0)=0$ i.e. $\xi^{-1}_{T}$ is well-defined $\widetilde{\mathbb{P}}-$almost surely.
\end{proof}

\subsection{The Rate Function}\label{ratefunction}

For all $\phi\in\mathcal{AC}_{T,A}$, let $\mathcal{A}_{d}(\phi)$ the set of vector valued Borel measurable functions $\mu$ such that for all $j=1,...,k$, $\mu^{j}_{t}\geq0$ and 
\begin{equation*}\label{allowed}
  \frac{d\phi_{t}}{dt}=\sum_{j=1}^{k} \mu^{j}_{t} h_{j}, \quad\text{t a.e}.
\end{equation*}
We define the rate function
\begin{equation*}
I_{T}(\phi) :=
 \begin{cases}
\inf_{\mu\in\mathcal{A}_{d}(\phi)}I_{T}(\phi|\mu),& \text{ if } \phi\in\mathcal{AC}_{T,A}; \\
\infty ,& \text{ else.}
\end{cases}
\end{equation*}
where
\begin{equation*}
  I_{T}(\phi|\mu)=\int_{0}^{T}\sum_{j=1}^{k}f(\mu_{t}^{j},\beta_{j}(\phi_{t}))dt
\end{equation*}
with $f(\nu,\omega)=\nu\log(\nu/\omega)-\nu+\omega$.
 We assume in the definition of $f(\nu,\omega)$ that for all $\nu>0$, $\log(\nu/0)=\infty$ and $0\log(0/0)=0\log(0)=0$.

 By using the Legendre-Fenchel transform we define another rate function by
 \begin{equation*}
\tilde{I}_{T}(\phi) :=
 \begin{cases}
\int_{0}^{T}L(\phi_{t},\phi'_{t})dt& \text{ if } \phi\in\mathcal{AC}_{T,A} \\
\infty & \text{ else.}
\end{cases}
\end{equation*}
where for all $z\in A$, $y\in\mathbb{R}^{d}$ 
\begin{equation*}
L(z,y)=\sup_{\theta\in\mathbb{R}^{d}}\ell(z,y,\theta)
\end{equation*}
with for all $z\in A$, $y\in\mathbb{R}^{d}$ and $\theta\in\mathbb{R}^{d}$
\begin{equation*}
\ell(z,y,\theta)=\big<\theta,y\big>-\sum_{j=1}^{k}\beta_{j}(z)(e^{\big<\theta,h_{j}\big>}-1)
\end{equation*}

We now show the equality between these two definitions of the rate function.
\begin{lemma}
For all  $\phi\in\mathcal{AC}_{T,A} $ and $\mu\in\mathcal{A}_{d}(\phi)$ we have
\begin{equation*}
\tilde{I}_{T}(\phi)\leq I_{T}(\phi|\mu).
\end{equation*}
In particular $\tilde{I}_{T}(\phi)\leq I_{T}(\phi)$
\end{lemma}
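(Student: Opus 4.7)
The plan is to prove the pointwise inequality $L(\phi_t,\phi'_t)\le \sum_{j=1}^k f(\mu^j_t,\beta_j(\phi_t))$ for almost every $t$, and then integrate over $[0,T]$. The key observation is that $f$ is, up to a change of variable, the Legendre-Fenchel transform of the scalar cumulant generating function $s\mapsto \omega(e^s-1)$ that appears jump-by-jump inside $\ell$, so the multivariate Legendre transform defining $L$ can be bounded above by separating one jump direction at a time.

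First, I would verify the one-dimensional duality: for $\nu\ge 0$ and $\omega\ge 0$,
\[
\sup_{s\in\mathbb{R}}\bigl[\nu s-\omega(e^s-1)\bigr]=f(\nu,\omega),
\]
with both sides equal to $+\infty$ when $\omega=0<\nu$, both equal to $0$ when $\nu=\omega=0$, and both equal to $\nu\log(\nu/\omega)-\nu+\omega$ when $\omega>0$ (the supremum being achieved at $s=\log(\nu/\omega)$, with the convention $0\log 0=0$). These are the conventions already adopted in the definition of $f$.

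Next, fix $\phi\in\mathcal{AC}_{T,A}$, $\mu\in\mathcal{A}_d(\phi)$ and $\theta\in\mathbb{R}^d$. Using $\phi'_t=\sum_j\mu^j_t h_j$ at almost every $t$,
\[
\ell(\phi_t,\phi'_t,\theta)=\sum_{j=1}^k\Bigl[\mu^j_t\langle\theta,h_j\rangle-\beta_j(\phi_t)\bigl(e^{\langle\theta,h_j\rangle}-1\bigr)\Bigr].
\]
Applying the one-dimensional inequality above with $s=\langle\theta,h_j\rangle$, $\nu=\mu^j_t$ and $\omega=\beta_j(\phi_t)$ to each term and summing gives $\ell(\phi_t,\phi'_t,\theta)\le \sum_j f(\mu^j_t,\beta_j(\phi_t))$. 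Since the right-hand side does not depend on $\theta$, taking the supremum over $\theta\in\mathbb{R}^d$ yields
\[
L(\phi_t,\phi'_t)\le \sum_{j=1}^k f(\mu^j_t,\beta_j(\phi_t))\quad\text{for a.e. }t\in[0,T].
\]
Integrating over $[0,T]$ produces $\tilde I_T(\phi)\le I_T(\phi|\mu)$. The second assertion $\tilde I_T(\phi)\le I_T(\phi)$ follows by taking the infimum of the right-hand side over $\mu\in\mathcal{A}_d(\phi)$.

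There is really no hard step here: the whole argument is convex-duality bookkeeping. The only point requiring a moment of care is the treatment of the degenerate case $\beta_j(\phi_t)=0$ with $\mu^j_t>0$, but this is handled trivially since then both sides are $+\infty$ by the stated conventions, and the inequality holds a fortiori.
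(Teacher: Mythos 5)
Your argument is correct and is essentially the paper's own proof: both decompose $\ell(\phi_t,\phi'_t,\theta)$ jump direction by jump direction using $\phi'_t=\sum_j\mu^j_t h_j$, and bound each term by the one-dimensional supremum $\sup_s[\nu s-\omega(e^s-1)]=f(\nu,\omega)$ (the paper maximizes $g_{\nu,\beta}$ at $s=\log(\nu/\beta)$, which is the same computation). Your handling of the degenerate case $\beta_j(\phi_t)=0<\mu^j_t$ at the pointwise level is a harmless variant of the paper's treatment of it at the integral level.
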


\begin{proof}
Assume first that for some $B\in\mathcal{B}([0,T])$, with $\int_{B}dt>0$ such that for all $t\in B$ there exists $1\leq j\leq k$ such that $\mu_{t}^{j}>0$ and $\beta_{j}(\phi_{t})=0$ then $I_{T}(\phi|\mu)=\infty$ and the inequality is true. We now assume that for almost all $t\in[0,T]$ and for all 
$j\in{1,...,k}$ , $\mu_{t}^{j}>0$ only if $\beta_{j}(\phi_{t})>0$ then for all $\theta\in\mathbb{R}^{d}$
\begin{align*}
    \ell(\phi_{t},\phi_{t},\theta)&= \sum_{j=1}^{k}\mu_{t}^{j}\big<\theta,h_{j}\big>-\beta_{j}(x)(e^{\big<\theta,h_{j}\big>}-1) \\
    & = \sum_{j=1}^{k}g_{\mu_{t}^{j},\beta_{j}(\phi_{t})}(\big<\theta,h_{j}\big>) \\
    &\leq \sum_{j=1}^{k}g_{\mu_{t}^{j},\beta_{j}(\phi_{t})}\Big(\log\frac{\mu_{t}^{j}}{\beta_{j}(\phi_{t})}\Big) \\
    &= \sum_{j=1}^{k} f(\mu_{t}^{j},\beta_{j}(\phi_{t})),   
    \end{align*}
    since $g_{\nu,\beta}(z)=\nu z-\beta(e^{z}-1)$ is a function which achieves its maximum at $z=\log\frac{\nu}{\beta}$.
\end{proof}

\begin{lemma}
For all $\phi\in\mathcal{AC}_{T,A} $, 
\begin{equation*}
I_{T}(\phi)\leq\tilde{I}_{T}(\phi).
\end{equation*}
\end{lemma}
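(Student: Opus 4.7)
The plan is to construct, for any $\phi\in\mathcal{AC}_{T,A}$ with $\tilde{I}_{T}(\phi)<\infty$, a measurable selection $\mu\in\mathcal{A}_{d}(\phi)$ satisfying $I_T(\phi|\mu)\leq\tilde{I}_T(\phi)$; the inequality is trivial when $\tilde{I}_T(\phi)=\infty$. The key observation is that for fixed $z\in A$ the function $\Lambda(z,\theta):=\sum_{j}\beta_{j}(z)(e^{\langle\theta,h_{j}\rangle}-1)$ is a smooth convex function of $\theta$, and $L(z,y)$ is by definition its convex conjugate; so by Fenchel--Moreau the primal
\[
G(z,y):=\inf\Bigl\{\sum_{j=1}^{k}f(\mu_{j},\beta_{j}(z)):\mu\geq 0,\ \sum_{j}\mu_{j}h_{j}=y\Bigr\}
\]
should coincide with $L(z,y)$, and the task reduces to exhibiting an explicit pointwise-in-$t$ minimizer and then verifying measurability.

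Fix $t$ with $L(\phi_{t},\phi_{t}')<\infty$ and write $z=\phi_{t}$, $y=\phi_{t}'$. First I would check that the feasibility set $\{\mu\geq 0:\sum_{j}\mu_{j}h_{j}=y\}$ is nonempty: otherwise a separating hyperplane yields $\theta_{0}$ with $\langle\theta_{0},h_{j}\rangle\leq 0$ for every $j$ and $\langle\theta_{0},y\rangle>0$, so that $\ell(z,y,s\theta_{0})\to+\infty$ as $s\to+\infty$, contradicting $L(z,y)<\infty$. When the supremum defining $L(z,y)$ is attained at some $\theta^{\star}$, the natural candidate $\mu^{j}:=\beta_{j}(z)\,e^{\langle\theta^{\star},h_{j}\rangle}$ is nonnegative, satisfies $\sum_{j}\mu^{j}h_{j}=\nabla_{\theta}\Lambda(z,\theta^{\star})=y$, and a direct computation using $\log(\mu^{j}/\beta_{j}(z))=\langle\theta^{\star},h_{j}\rangle$ yields
\[
\sum_{j=1}^{k}f(\mu^{j},\beta_{j}(z))=\langle\theta^{\star},y\rangle-\Lambda(z,\theta^{\star})=L(z,y).
\]
When the supremum is not attained (which can happen when some $\beta_{j}(z)=0$), I would approximate by a sequence of almost-maximizers $\theta_{n}$ and pass to the limit, exploiting the coercivity of $\mu\mapsto\sum_{j}f(\mu_{j},\beta_{j}(z))$, whose super-linear growth forces compactness of sublevel sets.

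Measurability of the selection $t\mapsto\mu_{t}$ then follows from a standard measurable-selection theorem (Aumann/Filippov) applied to the multifunction of optimizers, since $(t,\mu)\mapsto\sum_{j}f(\mu_{j},\beta_{j}(\phi_{t}))$ is a jointly measurable normal integrand and the affine constraint $\sum_{j}\mu_{j}h_{j}=\phi_{t}'$ depends measurably on $t$. Once $\mu\in\mathcal{A}_{d}(\phi)$ is produced, integrating over $[0,T]$ gives
\[
I_{T}(\phi)\leq I_{T}(\phi|\mu)=\int_{0}^{T}\sum_{j=1}^{k}f(\mu_{t}^{j},\beta_{j}(\phi_{t}))\,dt\leq\int_{0}^{T}L(\phi_{t},\phi_{t}')\,dt=\tilde{I}_{T}(\phi),
\]
which is the desired inequality.

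The main obstacle I expect is the degeneracy at times $t$ where one or several of the $\beta_{j}(\phi_{t})$ vanish: there the dual supremum need not be attained, and the Legendre-duality argument must be carried out by approximation, carefully checking that the limiting $\mu$ remains feasible and that the integrand behaves under the limit. A secondary technical worry is the Borel measurability of $t\mapsto\mu_{t}$, which is nevertheless routine thanks to the normal-integrand structure above.
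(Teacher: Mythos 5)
Your proposal is correct and follows essentially the same route as the paper: both construct the pointwise minimizer $\mu_t^{j}=\beta_j(\phi_t)\,e^{\langle\theta,h_j\rangle}$ from a maximizer (or maximizing sequence) of $\ell(\phi_t,\phi_t',\cdot)$, use the first-order condition to check $\mu\in\mathcal{A}_d(\phi)$, and verify that $\sum_j f(\mu_t^j,\beta_j(\phi_t))=L(\phi_t,\phi_t')$ before integrating. Your explicit attention to measurable selection and to the non-attained (degenerate $\beta_j=0$) case is a welcome refinement of details the paper delegates to \cite{Kratz2014} or leaves implicit.
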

\begin{proof}
If  $\tilde{I}_{T}(\phi)=\infty$ the inequality is true. We now assume that $\tilde{I}_{T}(\phi)<\infty$ 
then for almost all $t\in[0,T]$ we have $L(\phi_{t},\phi'_{t})=\sup_{\theta\in\mathbb{R}^{d}}\ell(\phi_{t},\phi'_{t},\theta)<\infty$
then by \cite{Kratz2014} there exists a maximizing sequence $(\theta_{n})_{n}$ of $\ell(\phi_{t},\phi'_{t},.)$ namely $L(\phi_{t},\phi'_{t})=\lim_{n}\ell(\phi_{t},\phi'_{t},\theta_{n})$ and constants $s_{j}$ such that for all $j=1,...,k$,
\begin{equation*}
\lim_{n}\exp\{\big<\theta_{n},h_{j}\big>\}=s_{j}.
\end{equation*}
Then we have 
\begin{equation*}
\lim_{n}\big<\theta_{n},\phi'_{t}\big>=L(\phi_{t},\phi'_{t})+\sum_{j:\beta_{j}(\phi_{t})>0}\beta_{j}(\phi_{t})(s_{j}-1).
\end{equation*}
Moreover we differentiate with respect to $\theta$ and obtain for all $n$
\begin{equation*}
\nabla_{\theta} \ell(\phi_{t},\phi'_{t},\theta_{n})=\frac{d\phi_{t}}{dt}-\sum_{j:\beta_{j}(\phi_{t})>0}\beta_{j}(\phi_{t})h_{j}\exp\{\big<\theta_{n},h_{j}\big>\}.
\end{equation*}
As  $(\theta_{n})_{n}$ is a maximizing sequence we have for all $t\in[0,T]$
\begin{equation*}
\lim_{n}\nabla_{\theta} \ell(\phi_{t},\phi'_{t},\theta_{n})=\frac{d\phi_{t}}{dt}-\sum_{j:\beta_{j}(\phi_{t})>0}\beta_{j}(\phi_{t})s_{j}h_{j}=0.
\end{equation*}
Thus, for almost all $t\in[0,T]$
\begin{equation*}
\frac{d\phi_{t}}{dt}=\sum_{j=1}^{k}\beta_{j}(\phi_{t})s_{j}h_{j}=\sum_{j=1}^{k}\mu^{j}_{t}h_{j}.
\end{equation*}
Where for almost all $t\in[0,T]$ and $j=1,...,k$ 
\begin{equation*}
\mu^{j}_{t}=\beta_{j}(\phi_{t})s_{j}.
\end{equation*}
We deduce that
\begin{align*}
    & I_{T}(\phi)\leq   I_{T}(\phi|\mu)\\
    &  =\int_{0}^{T}\sum_{j=1}^{k}f(\mu_{t}^{j},\beta_{j}(\phi_{t}))dt \\
    &=\int_{0}^{T}\sum_{j=1}^{k}\big\{\mu^{j}_{t}\log s_{j}+\beta_{j}(\phi_{t})(1-s_{j})\big\}dt \\
    &=\int_{0}^{T}L(\phi_{t},\phi'_{t})dt =\tilde{I}_{T}(\phi).
   \end{align*}
\end{proof}
\par The proof of the following theorem can be found in \cite{Kratz2014}.
\begin{theorem}
$I_{T}=\tilde{I}_{T}$ is a good rate function.
\end{theorem}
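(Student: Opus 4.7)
The plan is to combine the two preceding lemmas, which together give $I_T=\tilde I_T$, and then verify the two defining properties of a good rate function using the representation $\tilde I_T(\phi)=\int_0^T L(\phi_t,\phi'_t)\,dt$. Non-negativity is immediate from Gibbs' inequality $f(\nu,\omega)\ge 0$, so what remains is lower semicontinuity on $D_{T,A}$ (equipped with the sup-norm) and compactness of the sub-level sets $\Phi_\alpha=\{\phi:\tilde I_T(\phi)\le\alpha\}$. The workhorse throughout will be the superlinear growth bound
\[
L(z,y)\ge c\,|y|-k\sigma\bigl(e^{c}-1\bigr),\qquad z\in A,\ y\in\mathbb{R}^d,\ c>0,
\]
obtained by plugging $\theta=c\,y/|y|$ into $L(z,y)=\sup_\theta\ell(z,y,\theta)$ and invoking Assumption \ref{assump1}.\ref{assump12}.

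For compactness of $\Phi_\alpha$ I would use Arzela-Ascoli. Uniform boundedness is automatic since $A$ is compact. For equicontinuity, the bound above gives, for any $\phi\in\Phi_\alpha$ and $0\le s<t\le T$,
\[
|\phi_t-\phi_s|\le\int_s^t|\phi'_u|\,du\le\frac{\alpha}{c}+(t-s)\,\frac{k\sigma(e^{c}-1)}{c},
\]
so given $\varepsilon>0$ I first pick $c$ large enough to make the first term $<\varepsilon/2$ and then $\delta$ small enough to make the second $<\varepsilon/2$ whenever $|t-s|<\delta$. This yields uniform equicontinuity across $\Phi_\alpha$, and Arzela-Ascoli extracts a uniformly convergent subsequence from any sequence in $\Phi_\alpha$; the limit is continuous and its membership in $\Phi_\alpha$ will be secured once lower semicontinuity is in hand.

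For lower semicontinuity I would use integration by parts. For any $C^1$ function $\theta:[0,T]\to\mathbb{R}^d$ and any $\phi\in\mathcal{AC}_{T,A}$,
\[
\tilde I_T(\phi)\ge J_T(\phi,\theta):=\langle\theta_T,\phi_T\rangle-\langle\theta_0,\phi_0\rangle-\int_0^T\langle\theta'_t,\phi_t\rangle\,dt-\int_0^T\sum_{j=1}^k\beta_j(\phi_t)\bigl(e^{\langle\theta_t,h_j\rangle}-1\bigr)dt.
\]
For fixed smooth $\theta$, the map $\phi\mapsto J_T(\phi,\theta)$ is continuous on $D_{T,A}$ in sup-norm (the $\beta_j$ are continuous and $A$ is compact). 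Hence whenever $\phi_n\to\phi$ uniformly, $\liminf_n\tilde I_T(\phi_n)\ge J_T(\phi,\theta)$ for every smooth $\theta$, and taking the supremum over $\theta$ would give the desired inequality.

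The main technical obstacle is the identification $\sup_{\theta\in C^1}J_T(\phi,\theta)=\tilde I_T(\phi)$ on all of $D_{T,A}$. For $\phi\in\mathcal{AC}_{T,A}$ one measurably selects a near-maximiser $\theta^{*}_{t}$ of $\ell(\phi_t,\phi'_t,\cdot)$ and mollifies it, using the superlinear growth of $L$ together with dominated convergence to pass to the limit; for $\phi\in D_{T,A}\setminus\mathcal{AC}_{T,A}$ one must exhibit oscillating smooth $\theta$'s along which $J_T(\phi,\theta)\to+\infty$, again exploiting superlinearity to detect jumps or a singular part of $d\phi$. These density arguments are standard in the Freidlin-Wentzell framework and constitute the technically demanding part of the proof, which \cite{Kratz2014} carry out in full detail.
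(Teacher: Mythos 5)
Your proposal follows essentially the same route as the paper: both establish the equality $I_T=\tilde I_T$ from the two preceding lemmas, obtain compactness of the level sets via equicontinuity (from the superlinear growth of $L$) plus Arzel\`a--Ascoli on the compact state space $A$, and defer the hard part of lower semicontinuity to Lemma 4.20 of \cite{Kratz2014}. The only quibble is cosmetic: with $\theta=c\,y/|y|$ the constant should involve $e^{c\max_j|h_j|}$ rather than $e^{c}$, which does not affect the argument.
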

\begin{proof}
As the $\beta_{j}$ are bounded and continuous, we deduce from Lemma 4.20 in \cite{Kratz2014} that $\tilde{I}_{T}$ is lower semicontinuous with respect to Skorokhod's metric on $D_{T,A}$. Therefore the level set $\Phi(s)=\{\phi\in D_{T,A}:\tilde{I}_{T}(\phi)\leq s\}$ are closed and one can show that those sets are equicontinuous. We also know that $A$ is compact and then the relatively compact subsets of $C([0,T],A)$  are exactly the subsets of equicontinuous functions. Thus the level sets $\Phi(s)$ are compact since they are closed and relatively compact.
\end{proof}

The following result is a direct consequence of Lemma 4.22 in \cite{Kratz2014}
\begin{lemma}\label{semiconz}
Let $F$ a closed subset of $D_{T,A}$ and $z\in A$. We have
\begin{equation*}
\lim_{\epsilon\to0}\inf_{y\in A, |y-z|<\epsilon}\inf_{\phi\in F, \phi_{0}=y}I_{T}(\phi)=\inf_{\phi\in F, \phi_{0}=z}I_{T}(\phi).
\end{equation*}
\end{lemma}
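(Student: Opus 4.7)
The statement is an upper semicontinuity property of the map $y\mapsto\inf\{I_T(\phi):\phi\in F,\ \phi_0=y\}$ at the point $z$: as $y$ ranges over a shrinking neighborhood of $z$, the infimum cannot drop strictly below its value at $z$. My plan is to split the equality into two inequalities, one of which is trivial, and treat the non-trivial direction via the good rate function machinery already established.

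The inequality $\le$ is immediate: for every $\epsilon>0$ the point $y=z$ belongs to $\{y\in A:|y-z|<\epsilon\}$, so
\begin{equation*}
\inf_{y\in A,\ |y-z|<\epsilon}\ \inf_{\phi\in F,\ \phi_0=y} I_T(\phi) \ \le\ \inf_{\phi\in F,\ \phi_0=z} I_T(\phi),
\end{equation*}
and one passes to the limit $\epsilon\to 0$. For $\ge$, denote by $L$ the (monotone) limit on the left-hand side; if $L=\infty$ there is nothing to show, so assume $L<\infty$. Choose sequences $\epsilon_n\downarrow 0$, $y_n\in A$ with $|y_n-z|<\epsilon_n$, and $\phi^n\in F$ with $\phi^n_0=y_n$ and $I_T(\phi^n)\le L+1/n$. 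Because $I_T$ is a good rate function, the family $\{\phi^n\}$ lies in the compact level set $\Phi(L+1)$, and a subsequence converges in the Skorokhod topology to some $\phi^*\in D_{T,A}$. Lower semicontinuity of $I_T$ gives $I_T(\phi^*)\le\liminf_n I_T(\phi^n)\le L<\infty$, so $\phi^*\in\mathcal{AC}_{T,A}$ is continuous; therefore Skorokhod convergence upgrades to uniform convergence, which forces $\phi^*_0=\lim_n y_n=z$. Closedness of $F$ yields $\phi^*\in F$, and thus $\inf_{\phi\in F,\ \phi_0=z} I_T(\phi)\le I_T(\phi^*)\le L$, which is the desired inequality.

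The only delicate point is making sure that the limit obtained from the compactness of level sets actually has the correct initial condition. In general, Skorokhod convergence does not preserve values at a fixed time, but since the limit lies in $\mathcal{AC}_{T,A}$ it is continuous and the convergence becomes uniform, so evaluation at $t=0$ is continuous along the subsequence. This is exactly the content one extracts from Lemma 4.22 in \cite{Kratz2014}, which packages the compactness of level sets and the upgrading of Skorokhod to uniform convergence for continuous limits; that is why the authors can advertise the present lemma as a direct consequence of that result.
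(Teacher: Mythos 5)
Your proof is correct. The paper supplies no argument of its own for this lemma (it is dispatched with a citation to Lemma~4.22 of \cite{Kratz2014}), and your self-contained argument via monotonicity of the inner infimum, compactness of the level sets of the good rate function, lower semicontinuity, the upgrade of Skorokhod to uniform convergence at a continuous limit, and closedness of $F$ is precisely the standard route that the cited lemma packages, so it matches the intended approach.
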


\begin{lemma}\label{le10}
Let $s>0$, $\phi\in D_{T,A}$ and $\mu\in\mathcal{A}_{d}(\phi)$ such that $I_{T}(\phi|\mu)\leq s$ then for all $0\leq t_{1}, t_{2}\leq T$ such that $t_{2}-t_{1}\leq1/\sigma$,
\begin{equation*}
  \int_{t_{1}}^{t_{2}}\mu^{j}_{t}dt\leq\frac{s+1}{-\log(\sigma(t_{2}-t_{1}))}\quad\forall j=1,...,k.
\end{equation*}
\end{lemma}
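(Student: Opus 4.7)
The plan is to apply the hypothesis $I_T(\phi\mid\mu)\le s$ just to a single species $j$ and a single time interval $[t_1,t_2]$, and then convert the resulting integral inequality into a bound on the total mass $M:=\int_{t_1}^{t_2}\mu_t^j\,dt$ by squeezing $f$ from below by something convex in $\mu_t^j$ alone. Writing $\Delta:=t_2-t_1$, we are asked to show $ML\le s+1$ with $L:=-\log(\sigma\Delta)\ge 0$.

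First I would record two elementary facts about $f(\nu,\omega)=\nu\log(\nu/\omega)-\nu+\omega$. The first is nonnegativity $f(\nu,\omega)\ge 0$ for all $\nu,\omega\ge 0$ (a Poisson relative-entropy argument, or equivalently the tangent-line bound $\log x\ge 1-1/x$). Since $f\ge 0$ pointwise in $j$ and $t$, the assumption $I_T(\phi\mid\mu)\le s$ gives in particular
\begin{equation*}
\int_{t_1}^{t_2} f(\mu_t^j,\beta_j(\phi_t))\,dt\le s.
\end{equation*}
The second fact uses Assumption \ref{assump1}.\ref{assump12}: since $\beta_j(\phi_t)\le\sigma$, we have $-\log\beta_j(\phi_t)\ge-\log\sigma$, so on $\{\beta_j(\phi_t)>0\}$
\begin{equation*}
f(\mu_t^j,\beta_j(\phi_t))\ge \mu_t^j\log(\mu_t^j/\sigma)-\mu_t^j,
\end{equation*}
the inequality being automatic (both sides handled by convention) when $\mu_t^j=0$, and trivial when $\beta_j(\phi_t)=0$ and $\mu_t^j>0$ since $f=\infty$ there.

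Next I would apply Jensen's inequality to the convex map $x\mapsto x\log x$ against the uniform probability measure $dt/\Delta$ on $[t_1,t_2]$:
\begin{equation*}
\frac{1}{\Delta}\int_{t_1}^{t_2}\mu_t^j\log\mu_t^j\,dt\ \ge\ \frac{M}{\Delta}\log\frac{M}{\Delta}.
\end{equation*}
Combining this with the lower bound on $f$ from the previous step yields
\begin{equation*}
s\ \ge\ \int_{t_1}^{t_2}f(\mu_t^j,\beta_j(\phi_t))\,dt\ \ge\ M\log\frac{M}{\sigma\Delta}-M\ =\ ML+M\log M-M.
\end{equation*}

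The final step is the elementary inequality $x\log x\ge x-1$ for $x\ge 0$ (with the convention $0\log 0=0$); equivalently, $M-M\log M\le 1$. Plugging this into the previous display gives $ML\le s+M-M\log M\le s+1$, and dividing by $L=-\log(\sigma\Delta)>0$ (which is legitimate because $\sigma\Delta\le 1$ by hypothesis, and we may assume $\sigma\Delta<1$; the boundary case $\sigma\Delta=1$ makes the claimed bound vacuous) produces the stated estimate. The only subtlety to address cleanly is the handling of the degenerate set where $\beta_j(\phi_t)=0$, but there either $\mu_t^j=0$ (nothing to do) or $f=\infty$ (so $I_T(\phi\mid\mu)=\infty\le s$ is impossible and the hypothesis already excludes this case of positive measure); no real obstacle arises.
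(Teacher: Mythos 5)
Your proof is correct and follows essentially the same route as the paper: localize to one $j$ and to $[t_1,t_2]$ via $f\ge 0$, bound $f(\nu,\omega)\ge \nu\log(\nu/\sigma)-\nu$ using $\omega\le\sigma$, apply Jensen, and finish with an elementary convexity inequality. Your closing step $x\log x\ge x-1$ is just an equivalent reformulation of the paper's bound $h(x)\ge\alpha x-\sigma e^{\alpha}$ evaluated at $\alpha=-\log(\sigma(t_2-t_1))$, so the two arguments coincide.
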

\begin{proof}
We have 
\begin{equation*}
  \int_{0}^{T}f(\mu^{j}_{t},\beta_{j}(\phi_{t}))dt\leq I_{T}(\phi|\mu)\leq s.
\end{equation*}
moreover, the function $h(x)=x\log(x/\sigma)-x$ is convex in $x$ so that for all $0\leq t_{1}, t_{2}\leq T$
\begin{align*}
  &h\Big(\frac{1}{t_{2}-t_{1}}\int_{t_{1}}^{t_{2}}\mu^{j}_{t}dt\Big)
  \leq \frac{1}{t_{2}-t_{1}}\int_{t_{1}}^{t_{2}}h(\mu^{j}_{t})dt\\
  &\leq\frac{1}{t_{2}-t_{1}}\int_{t_{1}}^{t_{2}}\Big(\mu^{j}_{t}\log\frac{\mu^{j}_{t}}{\beta_{j}(\phi_{t})}-\mu^{j}_{t}+\beta_{j}(\phi_{t})\Big)dt\\
  &\leq\frac{s}{t_{2}-t_{1}}.
\end{align*}
It is easy to show that for all $\alpha>0$, $h(x)\geq\alpha x-\sigma\exp\{\alpha\}$ and then for all $\alpha>0$
\begin{equation*}
  \int_{t_{1}}^{t_{2}}\mu^{j}_{t}dt\leq\frac{1}{\alpha}(s+(t_{2}-t_{1})\sigma\exp\{\alpha\}).
\end{equation*}
Therefore If $t_{2}-t_{1}<1/\sigma$ taking $\alpha=-\log(\sigma(t_{2}-t_{1}))$, the result follows.
\end{proof}

For $\phi\in D_{T,A}$ let $\phi^{a}$ defined by $\phi^{a}_{t}=(1-a)\phi_{t}+az_{0}$  and
we have $\phi^{a}\in R^{a}$.

\begin{lemma}\label{le11}
For all $\phi\in D_{T,A}$ we have $\limsup_{a\rightarrow0}I_{T}(\phi^{a})\leq I_{T}(\phi)$.
\end{lemma}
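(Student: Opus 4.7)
The plan is as follows. When $I_T(\phi) = \infty$ the claim is trivial, so I assume $I_T(\phi) < \infty$. I fix $\epsilon > 0$ and pick $\mu \in \mathcal{A}_d(\phi)$ with $I_T(\phi|\mu) \leq I_T(\phi) + \epsilon$. The natural competitor in $\mathcal{A}_d(\phi^a)$ is
\[
\mu^a_t := (1-a)\mu_t.
\]
Indeed, differentiating $\phi^a_t = (1-a)\phi_t + az_0$ gives $\frac{d\phi^a_t}{dt} = (1-a)\frac{d\phi_t}{dt} = \sum_j (1-a)\mu_t^j h_j$. It therefore suffices to prove $\limsup_{a\to 0} I_T(\phi^a|\mu^a) \leq I_T(\phi|\mu)$, since then $\limsup_a I_T(\phi^a) \leq I_T(\phi) + \epsilon$ for every $\epsilon$.

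I will establish, for each $j$, a.e.\ $t$, and sufficiently small $a$, the pointwise estimate
\[
f\bigl((1-a)\mu_t^j,\,\beta_j(\phi^a_t)\bigr) \;\leq\; (1-a)\,f(\mu_t^j,\beta_j(\phi_t)) \;+\; a\,r_j(t),
\]
with $r_j$ integrable independently of $a$. Two ingredients go into this. First, convexity of $\nu \mapsto f(\nu,\omega)$ together with $f(0,\omega) = \omega$ gives $f((1-a)\mu_t^j, \beta_j(\phi_t)) \leq (1-a)\,f(\mu_t^j,\beta_j(\phi_t)) + a\,\beta_j(\phi_t)$. Second, the identity
\[
f(\nu,\omega_1) - f(\nu,\omega_2) = -\nu\log\tfrac{\omega_1}{\omega_2} + (\omega_1 - \omega_2)
\]
lets me pass from $\beta_j(\phi_t)$ to $\beta_j(\phi^a_t)$ in the second argument. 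The arithmetic difference $\omega_1 - \omega_2$ is $O(a)$ by Lipschitz continuity of $\beta_j$ combined with $|\phi^a_t - \phi_t| = a|z_0 - \phi_t| \leq c_1 a$. The hard part will be the logarithmic term, which can blow up as $\beta_j(\phi_t) \to 0$. This is exactly where Assumption \ref{assump1}.\ref{assump13} enters, via a two-regime split: on $\{t : \beta_j(\phi_t) \geq \lambda_1\}$, the elementary bound $|\log(x/y)| \leq |x-y|/\min(x,y)$ gives a contribution of order $a\mu_t^j$ with denominator $\lambda_1/2$ (valid once $a$ is so small that $|\beta_j(\phi^a_t) - \beta_j(\phi_t)| \leq \lambda_1/2$); on $\{t : \beta_j(\phi_t) < \lambda_1\}$ with $a < \lambda_2$, Assumption \ref{assump1}.\ref{assump13} forces $\beta_j(\phi^a_t) > \beta_j(\phi_t)$, so the logarithm has the favorable sign and the whole term $-(1-a)\mu_t^j \log(\cdot)$ is $\leq 0$ and can be discarded. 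The null set $\{t : \mu_t^j > 0,\,\beta_j(\phi_t) = 0\}$ is harmless since $I_T(\phi|\mu) < \infty$ excludes it up to Lebesgue measure zero. Putting everything together, one may take $r_j(t) = \beta_j(\phi_t) + \tfrac{2Cc_1}{\lambda_1}\mu_t^j + Cc_1$.

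Finally, I integrate over $[0,T]$ and sum over $j$ to obtain $I_T(\phi^a|\mu^a) \leq (1-a)\,I_T(\phi|\mu) + aK$, with $K$ depending on $\mu$, $T$ and the constants of Assumption \ref{assump1} but not on $a$. Finiteness of $K$ reduces to checking $\sum_j \int_0^T \mu_t^j\,dt < \infty$; for this I cover $[0,T]$ by finitely many sub-intervals of length $\leq 1/\sigma$ and invoke Lemma \ref{le10} on each, while $\sum_j \int_0^T \beta_j(\phi_t)\,dt \leq k\sigma T$ by Assumption \ref{assump1}.\ref{assump12}. Letting $a \to 0$ yields $\limsup_a I_T(\phi^a|\mu^a) \leq I_T(\phi|\mu) \leq I_T(\phi) + \epsilon$, and letting $\epsilon \downarrow 0$ concludes.
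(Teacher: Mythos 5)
Your proof is correct and follows essentially the same strategy as the paper: same competitor $\mu^a=(1-a)\mu$, same use of convexity of $f(\cdot,\omega)$, the same two-regime split on $\{\beta_j(\phi_t)<\lambda_1\}$ versus $\{\beta_j(\phi_t)\geq\lambda_1\}$ driven by Assumption \ref{assump1}.\ref{assump13}, and the same appeal to Lemma \ref{le10} for integrability of $\mu^j$. The one cosmetic difference is at the very end: the paper bounds $f(\mu^{j,a}_t,\beta_j(\phi^a_t))$ by an $a$-independent integrable majorant and invokes dominated convergence, whereas you establish the sharper linear-in-$a$ estimate $I_T(\phi^a|\mu^a)\leq(1-a)I_T(\phi|\mu)+aK$ and let $a\to 0$ directly, which dispenses with the DCT step cleanly.
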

\begin{proof}
First if $I_{T}(\phi)=\infty$ the result is easy.  If $I_{T}(\phi)<\infty$, $\forall\eta>0$ there exists $\mu$ such that $I_{T}(\phi|\mu)\leq I_{T}(\phi)+\eta$. Let $\mu^{a}=(1-a)\mu$ then $\mu^{a}\in\mathcal{A}_{d}(\phi^{a})$. We will now show that
\begin{equation}\label{conphia}
I_{T}(\phi^{a}|\mu^{a})\rightarrow I_{T}(\phi|\mu)\quad\text{as}\quad a\rightarrow0,
\end{equation}
which clearly implies the result since
\begin{align*}
\limsup_{a\rightarrow0}I_{T}(\phi^{a})&\leq\limsup_{a\rightarrow0}I_{T}(\phi^{a}|\mu^{a}) \\
&=I_{T}(\phi|\mu)\leq I_{T}(\phi)+\eta.
\end{align*}
 By the convexity of $f(\nu,\omega)$ in $\nu$ and because $0\leq\mu^{j,a}_{t}\leq\mu^{j}_{t}$, we have
\begin{align*}
  &0\leq f(\mu^{j,a}_{t},\beta_{j}(\phi^{a}_{t}))\leq f(0,\beta_{j}(\phi^{a}_{t})) +f(\mu^{j}_{t},\beta_{j}(\phi^{a}_{t})) \\
  &\leq\sigma+f(\mu^{j}_{t},\beta_{j}(\phi^{a}_{t})).
\end{align*}
Moreover we have
\begin{align*}
  &f(\mu^{j}_{t},\beta_{j}(\phi^{a}_{t}))=\mu^{j}_{t}\log\frac{\mu^{j}_{t}}{\beta_{j}(\phi_{t}^{a})}-\mu_{t}^{j}+\beta_{j}(\phi_{t}^{a})\\
  &=\mu^{j}_{t}\log\frac{\mu^{j}_{t}}{\beta_{j}(\phi_{t})}-\mu^{j}_{t}+\beta_{j}(\phi_{t})+\mu^{j}_{t}\log\frac{\beta_{j}(\phi_{t})}{\beta_{j}(\phi_{t}^{a})} +\beta_{j}(\phi^{a}_{t})-\beta_{j}(\phi_{t})\\
  &\leq f(\mu^{j}_{t},\beta_{j}(\phi_{t}))+\sigma+\mu^{j}_{t}\log\frac{\beta_{j}(\phi_{t})}{\beta_{j}(\phi_{t}^{a})}.
\end{align*}
If $\beta_{j}(\phi_{t})<\lambda_{1}$ then $\beta_{j}(\phi_{t})\leq\beta_{j}(\phi^{a}_{t})$  and $\log\frac{\beta_{j}(\phi_{t})}{\beta_{j}(\phi^{a}_{t})}<0$.
\\ If $\beta_{j}(\phi_{t})\geq\lambda_{1}$ then using the Lipschitz continuity of the rates $\beta_{j}$ we have
\begin{align*}
  &\log\frac{\beta_{j}(\phi_{t})}{\beta_{j}(\phi^{a}_{t})}\leq\log\frac{\beta_{j}(\phi_{t})}{\beta_{j}(\phi_{t})-C c_{1}a}\leq\log\frac{\lambda_{1}}{\lambda_{1}-C c_{1}a} \\ 
  &\leq\log\frac{1}{1-C c_{1}a/\lambda_{1}}<\frac{2C c_{1}a}{\lambda_{1}}<\frac{2C c_{1} c_{2}}{\lambda_{1}}.
\end{align*}
Since $\log(1/(1-x))<2x$ for $0<x<1/2$; here, we take $a$ small enough to ensure $C c_{1}a<\lambda_{1}/2$.
Finally for all $a<(\lambda_{1}/2c_{1}C)\wedge\lambda_{2}$
\begin{equation*}
  0\leq f(\mu^{j,a}_{t},\beta_{j}(\phi^{a}_{t}))\leq f(\mu^{j}_{t},\beta_{j}(\phi_{t}))+2\sigma+\frac{2C c_{1}\lambda_{2}}{\lambda_{1}}\mu^{j}_{t}.
\end{equation*}
By Lemma \ref{le10} $\mu^{j}_{t}$ is integrable, we have bounded $f(\mu^{j,a}_{t},\beta_{j}(\phi^{a}_{t}))$ for $0<a<(\lambda_{1}/2 c_{1}C)\wedge\lambda_{2}$ by an integrable function. Moreover $f(\mu^{j,a}_{t},\beta_{j}(\phi^{a}_{t}))\rightarrow f(\mu^{j}_{t},\beta_{j}(\phi_{t}))$ since first $I_{T}(\phi)<\infty$ means that for almost all $t\in[0,T]$ and $1\leq j\leq k$, $\mu^{j}_{t}>0$ only if $\beta_{j}(\phi_{t})>0$ and then 
\begin{align*}
    |f(\mu^{j,a}_{t},\beta_{j}(\phi^{a}_{t}))-f(\mu^{j}_{t},\beta_{j}(\phi_{t}))|&\leq(1-a)\mu^{j}_{t}\log(1-a)+|\beta_{j}(\phi^{a}_{t})-\beta_{j}(\phi_{t})|   \\
    &+|(1-a)\mu^{j}_{t}-\mu^{j}_{t}| +\Big|(1-a)\mu^{j}_{t}\log\frac{\mu^{j}_{t}}{\beta_{j}(\phi^{a}_{t})}-\mu^{j}_{t}\log\frac{\mu^{j}_{t}}{\beta_{j}(\phi_{t})}\Big|. 
\end{align*}
The last term of this inequality is either $0$ or converge to $0$ when $a$ tend to $0$. We deduce from, the dominated convergence theorem that
\begin{equation*}
  \int_{0}^{T}f(\mu^{j,a}_{t},\beta_{j}(\phi^{a}_{t}))dt\rightarrow \int_{0}^{T}f(\mu^{j}_{t},\beta_{j}(\phi_{t}))dt\quad\text{as}\quad a\rightarrow0,
\end{equation*}
from which \eqref{conphia} follows, hence the result.
\end{proof}

\begin{lemma}\label{le12}
Let $a>0$ and $\phi\in R^{a}$ such that $I_{T}(\phi)<\infty$. For all $\eta>0$ there exists $L>0$ and $\phi^{L}\in R^{a/2}$ such that $\|\phi-\phi^{L}\|_{T}<c_{1}\frac{a}{2}$ and $I_{T}(\phi^{L}|\mu^{L})\leq I_{T}(\phi)+\eta$ where $\mu^{L}\in\mathcal{A}_{d}(\phi^{L})$ such that $\mu^{L,j}_{t}<L$, $j=1,...,k$.
\end{lemma}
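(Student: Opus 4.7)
The plan is to truncate a near-optimal control at level $L$ and verify that the resulting path stays close enough to $\phi$ to remain in $R^{a/2}$, while the cost increases by at most $\eta$.

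First I would pick $\mu\in\mathcal{A}_d(\phi)$ with $I_T(\phi|\mu)\leq I_T(\phi)+\eta/2$, set $\mu^{L,j}_t:=\mu^j_t\wedge(L-1)$, and define
\begin{equation*}
\phi^L_t:=\phi_0+\int_0^t\sum_{j=1}^k\mu^{L,j}_s h_j\,ds,
\end{equation*}
so that automatically $\mu^L\in\mathcal{A}_d(\phi^L)$ and $\mu^{L,j}_t<L$. From
\begin{equation*}
\|\phi-\phi^L\|_T\leq\sum_{j=1}^k|h_j|\int_{\{\mu^j_s>L-1\}}\mu^j_s\,ds
\end{equation*}
and the elementary inequality $f(\nu,\omega)\geq\nu(\log(\nu/\sigma)-1)$ valid for $\nu\geq e\sigma$, the finiteness of $I_T(\phi|\mu)$ gives $\int_{\{\mu^j>M\}}\mu^j\,ds\leq I_T(\phi|\mu)/(\log(M/\sigma)-1)\to0$ as $M\to\infty$, hence $\|\phi-\phi^L\|_T\to0$ as $L\to\infty$. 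Taking $L$ large enough to ensure $\|\phi-\phi^L\|_T<c_2 a/2$, the triangle inequality combined with $\phi\in R^a$ yields $\phi^L\in R^{a/2}$, and since $c_2<c_1$ the desired bound $\|\phi-\phi^L\|_T<c_1 a/2$ holds a fortiori.

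It then remains to control
\begin{equation*}
I_T(\phi^L|\mu^L)-I_T(\phi|\mu)=\sum_{j=1}^k\int_0^T\bigl[f(\mu^{L,j}_t,\beta_j(\phi^L_t))-f(\mu^j_t,\beta_j(\phi_t))\bigr]dt.
\end{equation*}
On $\{\mu^j_t\leq L-1\}$ the first argument of $f$ is unchanged. On $\{\mu^j_t>L-1\}$, as soon as $L-1\geq\sigma\geq\beta_j(\phi^L_t)$ the convex function $\nu\mapsto f(\nu,\beta_j(\phi^L_t))$ is nondecreasing on $[L-1,\infty)$, so $f(L-1,\beta_j(\phi^L_t))\leq f(\mu^j_t,\beta_j(\phi^L_t))$. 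In either case the integrand is dominated by $f(\mu^j_t,\beta_j(\phi^L_t))-f(\mu^j_t,\beta_j(\phi_t))=-\mu^j_t\log(\beta_j(\phi^L_t)/\beta_j(\phi_t))+\beta_j(\phi^L_t)-\beta_j(\phi_t)$. Since $\phi$ and $\phi^L$ both lie in $R^{a/2}$, Assumption \ref{assump1} provides $\beta_j(\phi_t),\beta_j(\phi^L_t)\geq C_{a/2}>0$, and Lipschitz continuity then yields $|\log(\beta_j(\phi^L_t)/\beta_j(\phi_t))|\leq C\|\phi-\phi^L\|_T/C_{a/2}$. Lemma \ref{le10} applied over consecutive subintervals of length $1/(2\sigma)$ makes $\int_0^T\mu^j_t\,dt$ finite, so the whole difference is at most $K\|\phi-\phi^L\|_T$ for some $K=K(a,T,I_T(\phi|\mu))$, and enlarging $L$ once more forces this to be below $\eta/2$.

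The main obstacle is preventing the term $\mu^j_t\log(\beta_j(\phi^L_t)/\beta_j(\phi_t))$ from blowing up, since $\mu^j_t$ can be large: this is precisely why the conclusion places $\phi^L$ only in $R^{a/2}$, so that the rates $\beta_j$ remain bounded below by $C_{a/2}>0$ along both paths. The super-integrability of $\mu$ coming from $I_T(\phi|\mu)<\infty$ plays a dual role, making $\|\phi-\phi^L\|_T\to0$ as $L\to\infty$ and at the same time bounding $\int_0^T\mu^j_t\,dt$ so that its product with the small $\|\phi-\phi^L\|_T$ does tend to zero.
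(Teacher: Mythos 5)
Your proposal is correct and follows essentially the same route as the paper: truncate a near-optimal control $\mu$ at level $L$, define $\phi^{L}$ by the corresponding ODE, use the entropy bound to control $\int_{\{\mu^{j}>L-1\}}\mu^{j}$ so that $\|\phi-\phi^{L}\|_{T}\to0$ (hence $\phi^{L}\in R^{a/2}$), and then bound the change in cost using the lower bound $\beta_{j}\geq C_{a/2}$ along both paths. The only difference is cosmetic: where you invoke monotonicity of $\nu\mapsto f(\nu,\omega)$ above $\omega$ and a quantitative Lipschitz estimate on $\log\beta_{j}$, the paper uses the convexity bound $f(\mu^{L,j},\beta)\leq f(0,\beta)+f(\mu^{j},\beta)$ together with $\partial f/\partial\omega=1-\nu/\omega$ and dominated convergence.
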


\begin{proof}
Let $\eta>0$ and $\mu\in\mathcal{A}_{d}(\phi)$ such that  $I_{T}(\phi|\mu)<I_{T}(\phi)+\eta/2$.
For $L>0$ let $\mu_{t}^{L,j}=\mu_{t}^{j}\wedge L$ and let $\phi^{L}$ a solution of the ODE
\begin{equation*}
  \frac{d\phi^{L}_{t}}{dt}=\sum_{j=1}^{k}\mu^{L,j}_{t}h_{j}.
\end{equation*}
We first show that for $L$ sufficiently large $\phi^{L}$ is close to $\phi$ in supnorm.
Since $\mu^{j}_{t}$ is integrable over $[0,T]$ and $0\leq\mu^{L,j}_{t}\leq\mu^{j}_{t}$, the monotone convergence theorem implies that there exists $L_{a}>0$ such that for all $L>L_{a}$, $j=1,...,k$
\begin{equation*}
  \int_{0}^{T}\big|\mu^{L,j}_{t}-\mu^{j}_{t}\big|dt<\epsilon_{a}=c_{2}\frac{a}{2k\sqrt{d}}.
\end{equation*}
We deduce that
\begin{equation*}
  |\phi^{L,i}_{t}-\phi^{i}_{t}|\leq\sum_{j=1}^{k}|h^{i}_{j}|\int_{0}^{T}\big|\mu^{L,j}_{t}-\mu^{j}_{t}\big|dt<k\epsilon_{a}
\end{equation*}
and then we have for all $L>L_{a}$
$\|\phi^{L}-\phi\|<c_{1}\frac{a}{2}$ since $c_{2}<c_{1}$.
As $\phi\in R^{a}$ the above also ensures that $\phi^{L}\in R^{a/2}$ since for all $t\in[0,T]$
\begin{align*}
\dist(\phi^{L}_{t},\partial A)&\geq \dist(\phi_{t},\partial A)-|\phi^{L}_{t}-\phi_{t}| \\
&\geq c_{2}a-c_{2}\frac{a}{2}= c_{2}\frac{a}{2}.
\end{align*}
To show the convergence of $I_{T}(\phi^{L}|\mu^{L})$ to $I_{T}(\phi|\mu)$ we need to remark first using the convexity of $f(\nu,\omega)$ in $\nu$ that we have
\begin{equation*}
  f(\mu^{L,j}_{t},\beta_{j}(\phi^{L}_{t}))\leq f(0,\beta_{j}(\phi^{L}_{t}))+f(\mu^{j}_{t},\beta_{j}(\phi^{L}_{t})).
\end{equation*}
Since $\phi\in R^{a}$, $C_{a}\leq\beta_{j}(\phi_{t})\leq\sigma$ and $C_{a/2}\leq\beta_{j}(\phi^{L}_{t})\leq\sigma$ for all $L>L_{a}$,
notice that
\begin{equation*}
\frac{\partial f(\nu,\omega)}{\partial\omega}=-\frac{\nu}{\omega}+1
\end{equation*}
and therefore on the interval $[K_{a}, \theta]$ where $K_{a}=C_{a}\wedge C_{a/2}$
\begin{equation*}
  |f(\mu_{t}^{j},\beta_{j}(\phi^{L}_{t}))-f(\mu_{t}^{j},\beta_{j}(\phi_{t}))|<\bar{C}(\mu_{t}^{j}+1)
\end{equation*}
for some constant $\bar{C}>0$. Since $\mu_{t}^{j}$ and $f(\mu_{t}^{j},\beta_{j}(\phi_{t}))$ are integrable the dominated convergence theorem implies that
\begin{equation*}
  \int_{0}^{T}f(\mu^{L,j}_{t},\beta_{j}(\phi^{L}_{t}))dt\rightarrow\int_{0}^{T}f(\mu_{t}^{j},\beta_{j}(\phi_{t}))dt~~ \text{as}~~ L\to\infty.
\end{equation*}
\end{proof}

 Let $\epsilon>0$ be such that $T/\epsilon\in\mathds{N}$ and let the $\phi^{\epsilon}$ be the polygonal approximation of $\phi$ defined  for $t\in[\ell\epsilon,(\ell+1)\epsilon)$ by
\begin{equation}\label{poly1}
  \phi^{\epsilon}_{t}=\phi_{\ell\epsilon}\frac{(\ell+1)\epsilon-t}{\epsilon}+\phi_{(\ell+1)\epsilon}\frac{t-\ell\epsilon}{\epsilon}.
\end{equation}

\begin{lemma}\label{le13}
For any $\eta>0$. Let $0<a<1$, $\phi\in R^{a}$ and $\mu\in\mathcal{A}_{d}(\phi)$ such that $\mu^{j}_{t}<L$, $j=1,...,k$ for some $L>0$ and $I_{T}(\phi|\mu)<\infty$ then there exists $a_{\eta}$ such that for all $a<a_{\eta}$ there exists an $\epsilon_{a}>0$ such that for all $\epsilon<\epsilon_{a}$ the polygonal approximation $\phi^{\epsilon}\in R^{a/2}$ and $\|\phi-\phi^{\epsilon}\|_{T}<c_{2}\frac{a}{2}<c_{1}\frac{a}{2}$. 
Moreover, there exists $\mu^{\epsilon}\in\mathcal{A}_{d}(\phi^{\epsilon})$ such that $\mu^{\epsilon,j}_{t}<L$, $j=1,...,k$ and $I_{T}(\phi^{\epsilon}|\mu^{\epsilon})\leq I_{T}(\phi|\mu)+\eta$.
\end{lemma}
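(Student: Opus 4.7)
The plan is to take $\phi^\epsilon$ as the polygonal interpolant \eqref{poly1} and $\mu^\epsilon$ as the piecewise-constant time average of $\mu$, then combine Jensen's inequality for the convex map $\nu\mapsto f(\nu,\omega)$ with a Lipschitz perturbation in $\omega$. Since $\mu^j_t\leq L$ almost everywhere, $\phi$ is Lipschitz with constant $LM:=L\sum_{j}|h_{j}|$, and the same bound applies to $\phi^\epsilon$ on each subinterval $[\ell\epsilon,(\ell+1)\epsilon)$. Thus on that subinterval both $|\phi_t-\phi_{\ell\epsilon}|$ and $|\phi^\epsilon_t-\phi_{\ell\epsilon}|$ are bounded by $LM\epsilon$, so that $\|\phi-\phi^\epsilon\|_T\leq 2LM\epsilon$. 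Choosing $\epsilon<c_2 a/(4LM)$ gives $\|\phi-\phi^\epsilon\|_T<c_2a/2<c_1a/2$ (since $c_2<c_1$), and combined with $\phi\in R^a$ and the triangle inequality for $\dist(\cdot,\partial A)$ this places $\phi^\epsilon$ in $R^{a/2}$.

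For the rate-function comparison, I would set $\mu^{\epsilon,j}_t := \frac{1}{\epsilon}\int_{\ell\epsilon}^{(\ell+1)\epsilon}\mu^j_s\,ds$ on $[\ell\epsilon,(\ell+1)\epsilon)$. Since $\phi^\epsilon$ is piecewise linear with slope $(\phi_{(\ell+1)\epsilon}-\phi_{\ell\epsilon})/\epsilon = \sum_j \mu^{\epsilon,j}_t h_j$, one has $\mu^\epsilon\in\mathcal{A}_d(\phi^\epsilon)$, and the bound $\mu^{\epsilon,j}_t\leq L$ is immediate. The central identity is Jensen's inequality applied to the convex function $\nu\mapsto f(\nu,\omega)$: for any constant $\omega>0$ and each subinterval,
\[
\int_{\ell\epsilon}^{(\ell+1)\epsilon}f(\mu^{\epsilon,j}_t,\omega)\,dt = \epsilon\, f\Bigl(\tfrac{1}{\epsilon}\int_{\ell\epsilon}^{(\ell+1)\epsilon}\mu^j_s\,ds,\;\omega\Bigr)\leq \int_{\ell\epsilon}^{(\ell+1)\epsilon}f(\mu^j_t,\omega)\,dt.
\]
I would apply this with $\omega = \beta_j(\phi_{\ell\epsilon})$ and then restore the variable second arguments $\beta_j(\phi^\epsilon_t)$ and $\beta_j(\phi_t)$ at the cost of a Lipschitz perturbation in $\omega$. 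Since $\partial_\omega f(\nu,\omega) = 1 - \nu/\omega$ is bounded in absolute value by $1 + L/C_{a/2}$ on $\{\nu\leq L,\,\omega\geq C_{a/2}\}$, and $|\beta_j(\phi_t)-\beta_j(\phi_{\ell\epsilon})|\leq CLM\epsilon$ (likewise for $\phi^\epsilon$), summing the local errors over $\ell$ and $j$ yields
\[
I_T(\phi^\epsilon|\mu^\epsilon)\leq I_T(\phi|\mu) + K_a\,\epsilon
\]
for some constant $K_a$ depending only on $T,k,L,C,M,C_{a/2}$. The lemma then follows by taking $\epsilon_a := \min\bigl(c_2 a/(4LM),\,\eta/K_a\bigr)$.

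The main obstacle is the $\omega$-regularity of $f$: the derivative $\partial_\omega f$ blows up as $\omega\to 0$, which is precisely why one needs the uniform lower bound $\beta_j(\phi^\epsilon_t)\geq C_{a/2}>0$ afforded by the containment $\phi^\epsilon\in R^{a/2}$ established in the first step. The constant $K_a$ is large for small $a$, but since $\epsilon_a$ is allowed to depend on $a$, this causes no trouble — we only need $K_a\epsilon<\eta$ for fixed admissible $a$. The threshold $a_\eta$ in the statement can simply be taken smaller than the $a_0$ of Assumption~\ref{assump1}.\ref{assump14}, which provides the quantitative estimate $C_{a/2}\geq\exp\bigl(-(a/2)^{-\nu}\bigr)$ that makes $K_a$ explicit.
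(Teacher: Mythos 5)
Your proposal is correct and follows essentially the same route as the paper: the same polygonal $\phi^{\epsilon}$, the same averaged control $\mu^{\epsilon,j}_{t}=\frac{1}{\epsilon}\int_{\ell\epsilon}^{(\ell+1)\epsilon}\mu^{j}_{s}\,ds$, Jensen's inequality in the first argument of $f$, and a Lipschitz perturbation in the second argument controlled by the lower bound $C_{a/2}$ on the rates. The only difference is bookkeeping: the paper calibrates $\epsilon_{a}$ so that the modulus of continuity of $\phi$ stays below $c_{2}\,a e^{-a^{-\nu}}/4$ (so the $e^{a^{-\nu}}$ blow-up of $1/C_{a}$ cancels and the final error is of order $a$, whence the restriction $a<\eta/2VT$), whereas you exploit the Lipschitz bound $|\phi'_{t}|\leq LM$ to keep the error in the form $K_{a}\epsilon$ and absorb it by shrinking $\epsilon$ for each fixed $a$ --- both are legitimate given the quantifier structure of the statement.
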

\begin{proof}
Since $\phi$ is uniformly continuous on $[0,T]$ there exists an $\epsilon_{a}$ such that $\forall\epsilon<\epsilon_{a}$
\begin{equation*}
  \sup_{|t-t'|<2\epsilon}|\phi_{t}-\phi_{t'}|<c_{2}\frac{a e^{-a^{-\nu}}}{4}
\end{equation*}
and then  $\|\phi-\phi^{\epsilon}\|_{T}<c_{2}\frac{a}{2}$ and $\phi^{\epsilon}\in R^{a/2}$ since for all $t\in[0,T]$,
\begin{align*}
    \dist(\phi^{\epsilon}_{t},\partial A)&\geq  \dist(\phi_{t},\partial A)-|\phi^{\epsilon}_{t}-\phi_{t}|  \\
    &\geq  \dist(\phi_{t},\partial A)-|\phi_{\ell\epsilon}-\phi_{t}|-|\phi_{(\ell+1)\epsilon}-\phi_{t}|\geq c_{2}\frac{a}{2}.
\end{align*}
For $t\in]\ell\epsilon,(\ell+1)\epsilon[$
\begin{equation*}
  \frac{d\phi^{\epsilon}_{t}}{dt}=\frac{\phi_{(\ell+1)\epsilon}-\phi_{\ell\epsilon}}{\epsilon}=\frac{1}{\epsilon}\sum_{j=1}^{k}h_{j}\int_{\ell\epsilon}^{(\ell+1)\epsilon}\mu^{j}_{t}dt
\end{equation*}
therefore for all $t\in[\ell\epsilon, (\ell+1)\epsilon[$, $\mu^{\epsilon}_{t}$  defined by
\begin{equation*}
  \mu^{\epsilon,j}_{t}=\frac{1}{\epsilon}\int_{\ell\epsilon}^{(\ell+1)\epsilon}\mu^{j}_{t}dt, j=1,...,k
\end{equation*}
is such that $\mu^{\epsilon}\in\mathcal{A}_{d}(\phi^{\epsilon})$ and is constant over $[\ell\epsilon, (\ell+1)\epsilon[$. We also note that $\mu^{\epsilon,j}_{t}\leq L$ for all $j=1,...,k$. Moreover if $0<\nu\leq L$ and $\omega\geq C_{a}$ then
\begin{equation*}
  \Big|\frac{\partial f(\nu,\omega)}{\partial\omega}\Big|=|-\frac{\nu}{\omega}+1|\leq\frac{L}{C_{a}}+1.
\end{equation*}
By the assumption \ref{assump1} \ref{assump14}, there exists $\tilde{a}_{\eta}>0$ such that for all $a<\tilde{a}_{\eta}$ 
\begin{equation*}
\frac{L}{C_{a}}+1\leq L e^{a^{-\nu}}+1
\end{equation*}

Then for $t\in[\ell\epsilon,(\ell+1)\epsilon[$  and $a<\bar{a}_{\eta}$, $\tilde{a}_{\eta}$
\begin{align*}
  &|f(\mu^{\epsilon,j}_{t},\beta_{j}(\phi_{t}^{\epsilon}))-f(\mu^{\epsilon,j}_{t},\beta_{j}(\phi_{\ell\epsilon}))|\leq\frac{1}{2}C(L+1)a=Va \\
  &|f(\mu^{j}_{t},\beta_{j}(\phi_{t}))-f(\mu^{j}_{t},\beta_{j}(\phi_{\ell\epsilon}))|\leq \frac{1}{2}C(L+1)a=Va.
\end{align*}
The above imply that
\begin{align*}
  \int_{\ell\epsilon}^{(\ell+1)\epsilon}f(\mu^{\epsilon,j}_{t},\beta_{j}(\phi_{t}^{\epsilon}))dt &\leq \int_{\ell\epsilon}^{(\ell+1)\epsilon}f(\mu^{\epsilon,j}_{t},\beta_{j}(\phi_{\ell\epsilon}))dt +\epsilon Va\\
   &= \epsilon f(\mu^{\epsilon,j}_{\ell\epsilon},\beta_{j}(\phi_{\ell\epsilon}))+\epsilon Va \\
   &\leq \int_{\ell\epsilon}^{(\ell+1)\epsilon}f(\mu^{j}_{t},\beta_{j}(\phi_{\ell\epsilon}))dt +\epsilon Va \\
   &\leq \int_{\ell\epsilon}^{(\ell+1)\epsilon}f(\mu^{j}_{t},\beta_{j}(\phi_{t}))dt +2Va\epsilon
\end{align*}
where the second inequality follows from Jensen's inequality. Therefore
\begin{equation*}
  I_{T}(\phi^{\epsilon}|\mu^{\epsilon})\leq I_{T}(\phi|\mu)+2VTa
\end{equation*}
We can now choose $a<\min\{\bar{a}_{\eta},\tilde{a}_{\eta}, \eta/2VT\}$ to have our result.
\end{proof}

The next lemma states a large deviation estimate for Poisson random variables.
\begin{lemma}\label{le17}
 Let $Y_{1}$,$Y_{2}$,...be independent Poisson random variables with mean $\sigma\epsilon$. For all $N\in\mathbb{N}$, let
\begin{equation*}
 \bar{Y}^{N}=\frac{1}{N}\sum_{n=0}^{N}Y_{n}.
\end{equation*}
For any $s>0$ there exist $K, \epsilon_{0}>0$ and $N_{0}\in\mathbb{N}$ such that taking $g(\epsilon)=K\sqrt{\log^{-1}(\epsilon^{-1})}$ we have
\begin{equation*}
 \mathbb{P}^{N}(\bar{Y}^{N}>g(\epsilon))<\exp\{-sN\}
\end{equation*}
for all $\epsilon<\epsilon_{0}$ and $N>N_{0}$.
\end{lemma}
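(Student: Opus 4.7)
The approach is a direct exponential Chernoff estimate exploiting the explicit MGF of a Poisson variable. Since the $Y_n$ are i.i.d.\ $\mathrm{Poisson}(\sigma\epsilon)$, the sum $S_N := \sum_{n=0}^{N} Y_n$ is itself $\mathrm{Poisson}((N+1)\sigma\epsilon)$, hence $\mathbb{E}[\exp(\theta S_N)] = \exp\bigl((N+1)\sigma\epsilon(e^\theta-1)\bigr)$ for every $\theta \in \mathbb{R}$. Applying Markov's inequality to $\exp(\theta S_N)$ with $\theta > 0$ yields
\[
  \mathbb{P}(\bar Y^N > g(\epsilon))
    = \mathbb{P}\bigl(S_N > N g(\epsilon)\bigr)
    \le \exp\bigl(-\theta N g(\epsilon) + (N+1)\sigma\epsilon(e^\theta - 1)\bigr).
\]

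The crucial step is to pick $\theta$ matching the scaling of $g(\epsilon)$. I would take $\theta = \log(1/\epsilon)$, so that $\sigma\epsilon(e^\theta-1) = \sigma(1-\epsilon) \le \sigma$, while $\theta g(\epsilon) = K\sqrt{\log(1/\epsilon)}$. Assuming $N \ge N_0 := 1$ gives $(N+1)/N \le 2$, and the estimate above becomes
\[
  \mathbb{P}(\bar Y^N > g(\epsilon)) \le \exp\bigl(-N[K\sqrt{\log(1/\epsilon)} - 2\sigma]\bigr).
\]
Given $s > 0$, fix any $K > 0$ (say $K = 1$) and then choose $\epsilon_0$ small enough that $K\sqrt{\log(1/\epsilon_0)} \ge s + 2\sigma + 1$; the required bound $\mathbb{P}(\bar Y^N > g(\epsilon)) < \exp(-sN)$ holds for every $\epsilon < \epsilon_0$ and every $N > N_0$.

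There is essentially no obstacle: the rate $g(\epsilon) \sim 1/\sqrt{\log(1/\epsilon)}$ is tailored precisely so that Chernoff with $\theta = \log(1/\epsilon)$ balances the linear term $\theta g(\epsilon)$ against the Poisson cumulant $\sigma\epsilon(e^\theta-1)$. One could equivalently use the optimal $\theta = \log\bigl(g(\epsilon)/(\sigma\epsilon)\bigr)$, which has the same leading asymptotics $\theta \sim \log(1/\epsilon)$ and produces the same rate, but the simpler choice above already gives a divergent exponent $K\sqrt{\log(1/\epsilon)} \to \infty$ that dominates any fixed $s$ as soon as $\epsilon$ is small enough. The only book-keeping is keeping the constants $\sigma$ and the factor $(N+1)/N \le 2$ explicit so that the exponent dominates $-sN$ uniformly in $N \ge N_0$.
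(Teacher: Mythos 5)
Your proof is correct, and at heart it is the same idea as the paper's: an exponential (Chernoff/Cram\'er) bound on the Poisson tail, with the scale $g(\epsilon)=K\sqrt{\log^{-1}(\epsilon^{-1})}$ chosen so that the exponent diverges as $\epsilon\to0$. The paper invokes Cram\'er's theorem, computes the rate function $\Lambda^{*}_{\epsilon}(x)=x\log\frac{x}{\sigma\epsilon}-x+\sigma\epsilon$, and evaluates it at $x=g(\epsilon)$ (this corresponds to your remark about the optimal tilt $\theta=\log(g(\epsilon)/\sigma\epsilon)$), whereas you use the suboptimal but explicit tilt $\theta=\log(1/\epsilon)$. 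Your version is arguably cleaner on one point: Cram\'er's theorem as quoted is an asymptotic $\limsup$ statement, so strictly speaking it yields an $N_{0}$ that could depend on $\epsilon$, and one must observe (as the upper half of Cram\'er's bound in fact is) that the estimate is non-asymptotic to get the uniformity in $N$ that the lemma claims. Your direct Markov-inequality computation with explicit constants supplies exactly that uniformity, at the modest cost of tracking the harmless factor $(N+1)/N\le 2$.
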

\begin{proof}
 We apply the Gramer's theorem see e.g \cite{Dembo2009} (chapter 2) \begin{equation*}
 \limsup_{N\to\infty}\frac{1}{N}\log(\mathbb{P}^{N}(\bar{Y}^{N}>g(\epsilon)))\leq-\inf_{x\geq g(\epsilon)}\Lambda^{*}_{\epsilon}(x)
\end{equation*}
where $\Lambda^{*}_{\epsilon}(x)=\sup_{\lambda\in\mathbb{R}}\{\lambda x-\Lambda_{\epsilon}(\lambda)\}$ with 
\begin{align*}
 &\Lambda_{\epsilon}(\lambda)=\log(\mathbb{E}(e^{\lambda Y_{1}})=\sigma\epsilon(e^{\lambda}-1).
\end{align*}
We deduce that 
\begin{equation*}
 \Lambda^{*}_{\epsilon}(x)=x\log\frac{x}{\sigma\epsilon}-x+\sigma\epsilon.
\end{equation*}
This last function is convex It reaches its infimum at $x=\sigma\epsilon$ and as $\lim_{\epsilon\rightarrow0}\frac{g(\epsilon)}{\sigma\epsilon}=+\infty$
there exists $\epsilon_{1}>0$ such that $g(\epsilon)>\sigma\epsilon$ for all $\epsilon<\epsilon_{1}$ and then
\begin{align*}
 \inf_{x\geq g(\epsilon)}\Lambda^{*}_{\epsilon}(x)&=g(\epsilon)\log\frac{g(\epsilon)}{\sigma\epsilon}-g(\epsilon)+\sigma\epsilon\\
 &=g(\epsilon)\log(g(\epsilon))-g(\epsilon)\log(\sigma\epsilon)-g(\epsilon)+\sigma\epsilon\\
 &\approx K\sqrt{\log(1/\epsilon)}\rightarrow\infty\quad\text{as}\quad\epsilon\rightarrow0.
\end{align*}
Then there exists $\epsilon_{2}>0$ such that $\inf_{x\geq g(\epsilon)}\Lambda^{*}_{\epsilon}(x)>s$ for all $\epsilon<\epsilon_{2}$.
\par Taking $\epsilon_{0}=\min\{\epsilon_{1}, \epsilon_{2}\}$, we have the lemma.
\end{proof}

\section{The Lower Bound} \label{Seclower}

 We first prove that for $z\in A$, $\phi\in D_{T,A}$, $\phi_{0}=z$ and any $\eta>0$, $\delta>0$ there exist $\tilde{\delta}>0$ and $N_{\eta,\delta}$, such that for all $y$, $|y-z|<\tilde{\delta}$ and any $N>N_{\eta,\delta}$, we have
\begin{equation}\label{loexplower}
  \mathbb{P}_{y}(\|Z^{N}-\phi\|_{T}<\delta)\geq\exp\{-N(I_{T}(\phi)+\eta)\},
\end{equation}
Where $\xi_{T}$ is defined by \eqref{likelihood}.

To this end, it is enough to prove \eqref{loexplower} considering $\phi\in\mathcal{AC}_{T,A}$ because the inequality is true when $I_{T}(\phi)=\infty$.
We apply some lemmas of the preceding section to show that it is enough to consider some suitable paths $\phi$ with the $\mu\in\mathcal{A}_{d}(\phi)$.

The goal of the next lemma is to establish a crucial inequality to deduce \eqref{loexplower}.
\begin{lemma}\label{preliapprox}
For $z\in A$, $\phi\in\mathcal{AC}_{T,A}$, $\phi_{0}=z$, there exists $a_{0}$ such that for any $a<a_{0}$, $\epsilon>0$ the polygonal approximation $\phi^{\epsilon}$ of $\phi^{a}$  defined by 
\begin{equation}\label{approxa}
  \phi^{\epsilon}_{t}=\phi^{a}_{\ell\epsilon}\frac{(\ell+1)\epsilon-t}{\epsilon}+\phi^{a}_{(\ell+1)\epsilon}\frac{t-\ell\epsilon}{\epsilon}\quad \forall t\in[\ell\epsilon, (\ell+1)\epsilon[,
\end{equation}
satisfies the following assertion:

 For any $\mu^{\epsilon}\in\mathcal{A}_{d}(\phi^{\epsilon})$ constant over the time intervals $[\ell\epsilon, (\ell+1)\epsilon[$ and  bounded above by some constant $L>0$, any $\eta>0$ and suitable small $\delta>0$ there exist $0<\tilde{\delta}<\delta$ and $N_{\eta,\delta,\tilde{\delta}}\in\mathbb{N}$ such that for all $y$, $|y-z|<\tilde{\delta}$ and any $N>N_{\eta,\delta,\tilde{\delta}}$
\begin{equation*}
  \mathbb{P}_{y}(\|Z^{N}-\phi^{\epsilon}\|_{T}<\delta)\geq\exp\{-N(I_{T}(\phi^{\epsilon}|\mu^{\epsilon})+\eta)\}.
\end{equation*}
\end{lemma}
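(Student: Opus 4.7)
The plan is to apply the Girsanov change of measure from Theorem~\ref{girsanov} with the state-independent, piecewise constant intensities $\tilde{\beta}_j(t):=\mu^{\epsilon,j}_t\in[0,L]$, and to control the Radon--Nikodym density $\xi_T$ via a law-of-large-numbers argument under the new measure. Since $\phi^\epsilon\in R^{a/2}$ by Lemma~\ref{le13}, the rates $\beta_j(\phi^\epsilon_t)$ stay bounded below by $C_{a/2}>0$; choosing $\delta$ small enough so that the event $\{\|Z^N-\phi^\epsilon\|_T<\delta\}$ forces $Z^N$ to stay in $R^{a/4}$, one also has $\beta_j(Z^N(t))\geq C_{a/4}$ on that event, so that all the logarithms appearing below remain bounded.

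Under the changed measure $\widetilde{\mathbb{P}}^N$, the drift of $Z^N$ is $\sum_j\mu^{\epsilon,j}_t h_j=d\phi^\epsilon_t/dt$, so the same Gronwall/Dini argument as in the proof of Theorem~\ref{LLN} gives $Z^N\to\phi^\epsilon$ uniformly on $[0,T]$ in $\widetilde{\mathbb{P}}$-probability (the initial discrepancy $|Z^N(0)-\phi^\epsilon(0)|\leq\tilde{\delta}+ac_1+1/N$ being absorbed by taking $\tilde{\delta}$ small, which accounts for the word ``suitable'' in the statement). Combined with Corollary~\ref{cogirsanov},
\begin{equation*}
  \mathbb{P}_y(\|Z^N-\phi^\epsilon\|_T<\delta) = \widetilde{\mathbb{E}}_y\bigl[\xi_T^{-1}\mathbf{1}_{\{\|Z^N-\phi^\epsilon\|_T<\delta\}}\bigr],
\end{equation*}
it therefore suffices to bound $\xi_T^{-1}$ from below on an event of large $\widetilde{\mathbb{P}}$-probability.

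Taking the logarithm of formula~\eqref{likelihood}, the key step is the convergence
\begin{equation*}
  \frac{1}{N}\log\xi_T\;\longrightarrow\;\sum_{j=1}^k\int_0^T f(\mu^{\epsilon,j}_t,\beta_j(\phi^\epsilon_t))\,dt = I_T(\phi^\epsilon|\mu^\epsilon)\quad\text{in }\widetilde{\mathbb{P}}\text{-probability.}
\end{equation*}
The continuous integral contribution $\sum_j\int_0^T(\beta_j(Z^N(t))-\mu^{\epsilon,j}_t)\,dt$ converges by bounded convergence from the uniform convergence of $Z^N$ to $\phi^\epsilon$. The jump contribution $N^{-1}\sum_{p,j}\delta_p(j)\log(\mu^{\epsilon,j}_{\tau_p}/\beta_j(Z^N(\tau_p^-)))$ is handled by writing it as its predictable $\widetilde{\mathbb{P}}$-compensator $\sum_j\int_0^T\mu^{\epsilon,j}_t\log(\mu^{\epsilon,j}_t/\beta_j(Z^N(t)))\,dt$ plus a martingale; the compensator converges, by uniform convergence of $Z^N$, to $\sum_j\int_0^T\mu^{\epsilon,j}_t\log(\mu^{\epsilon,j}_t/\beta_j(\phi^\epsilon_t))\,dt$, while the martingale fluctuation has variance $O(1/N)$ since the integrand is bounded by $L\,|\log(L/C_{a/4})|$, hence vanishes by Doob. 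Subintervals on which $\mu^{\epsilon,j}_t=0$ cause no problem: the $j$-th Poisson process is then simply switched off under $\widetilde{\mathbb{P}}$, contributing $0$ to the jump sum (with the convention $0\log 0=0$) and only $\beta_j(\phi^\epsilon_t)\,dt$ to both the limit and to $I_T$.

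On the good event $E_N=\{\|Z^N-\phi^\epsilon\|_T<\delta\}\cap\{\log\xi_T\leq N(I_T(\phi^\epsilon|\mu^\epsilon)+\eta/2)\}$, which has $\widetilde{\mathbb{P}}(E_N)\to 1$, the previous two steps yield
\begin{equation*}
  \mathbb{P}_y(\|Z^N-\phi^\epsilon\|_T<\delta)\geq \exp\{-N(I_T(\phi^\epsilon|\mu^\epsilon)+\eta/2)\}\,\widetilde{\mathbb{P}}(E_N)\geq \exp\{-N(I_T(\phi^\epsilon|\mu^\epsilon)+\eta)\}
\end{equation*}
for $N$ large enough, which is the desired inequality. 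The main obstacle is the compensator LLN for the jump part of $N^{-1}\log\xi_T$: its validity depends on the uniform lower bound $\beta_j(Z^N(t))\geq C_{a/4}>0$ on the approximation event, which is exactly why the preliminary construction of Lemmas~\ref{le11}--\ref{le13} was necessary to ensure $\phi^\epsilon\in R^{a/2}$, and on keeping the jump rates $\mu^{\epsilon,j}$ bounded above by $L$ so that the compensator integrand is bounded, allowing Doob's inequality to kill the martingale fluctuation at scale $N$.
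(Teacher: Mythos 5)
Your proposal is correct and follows essentially the same route as the paper: a Girsanov change of measure to the deterministic piecewise-constant rates $\mu^{\epsilon,j}_t$, a lower bound on $\xi_T^{-1}$ on a good event where the jump part of $\log\xi_T$ is close to its mean, and the law of large numbers under $\widetilde{\mathbb{P}}_y$ to show that event has probability tending to one. The only (cosmetic) difference is that you control the jump sum via its predictable compensator and a Doob/second-moment bound on the martingale remainder, whereas the paper exploits the piecewise-constant structure to reduce it to explicit Poisson counts on each subinterval and applies Chebyshev's inequality --- these are the same estimate in two languages.
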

\begin{proof} 
Note that $\mu^{\epsilon}$ can be choose as in Lemma \ref{le13}. We define some events $B_{j}$, $j=1,...,k$ for controlling the likelihood ratio. For $\gamma>0$ let
\begin{equation*}
  B_{j}=\Big\{\Big|\sum_{p=1}^{Q}\delta_{p}(j)\log\Big(\frac{\beta_{j}(Z^{N}(\tau_{p}^{-}))}{\mu^{\epsilon,j}_{\lfloor\tau_{p}/\epsilon\rfloor\epsilon}}\Big) -N\sum_{\ell=1}^{T/\epsilon}\mu^{\epsilon,j}_{\ell\epsilon}\log\Big(\frac{\beta_{j}(\phi^{\epsilon}_{\ell\epsilon})}{\mu^{\epsilon,j}_{\ell\epsilon}}\Big)\epsilon\Big|\leq N\gamma\Big\}
\end{equation*}
Where $Q$ was introduced first above theorem \ref{girsanov}.

In what follows we put $\tilde{\beta}_{j}(Z^{N}(t))=\mu^{\epsilon,j}_{t}$ and we have on $\{\|Z^{N}-\phi^{\epsilon}\|_{T}<\delta\}\cap(\bigcap_{j=1}^{k}B_{j})=\{\|Z^{N}-\phi^{\epsilon}\|_{T}<\delta\}\cap B$,
\begin{align*}
  &\xi^{-1}_{T}=\exp\Big\{\sum_{p=1}^{Q}\sum_{j=1}^{k}\delta_{p}(j)\log\Big(\frac{\beta_{j}(Z^{N}(\tau_{p}^{-}))}{\mu^{\epsilon,j}_{\tau_{p}^{-}}}\Big) +N\int_{0}^{T}\sum_{j=1}^{k}(\mu^{\epsilon,j}_{t}-\beta_{j}(Z^{N}(t)))dt\Big\}\\
  &\geq\exp\Big\{-N\sum_{\ell=1}^{T/\epsilon}\sum_{j=1}^{k}\mu^{\epsilon,j}_{\ell\epsilon}\log\Big(\frac{\mu^{\epsilon,j}_{\ell\epsilon}}{\beta_{j}(\phi^{\epsilon}_{\ell\epsilon})}\Big)\epsilon +N\int_{0}^{T}\sum_{j=1}^{k}(\mu^{\epsilon,j}_{t}-\beta_{j}(Z^{N}(t)))dt-kN\gamma\Big\}\\
  &\geq\exp\Big\{-N\sum_{\ell=1}^{T/\epsilon}\sum_{j=1}^{k}\mu^{\epsilon,j}_{\ell\epsilon}\log\Big(\frac{\mu^{\epsilon,j}_{\ell\epsilon}}{\beta_{j}(\phi^{\epsilon}_{\ell\epsilon})}\Big)\epsilon +N\int_{0}^{T}\sum_{j=1}^{k}(\mu^{\epsilon,j}_{t}-\beta_{j}(\phi^{\epsilon}_{t}))dt-N(kTC\delta+k\gamma)\Big\}\\
\end{align*}
We note here that the first inequality is true because the $\mu^{\epsilon,j}_{t}$ is constant on the intervals $[\ell\epsilon, (\ell+1)\epsilon[$ and the second one come from the Lipschitz continuity of the rates $\beta_{j}$.
Since the integrand is continuous, we deduce from the convergence of the Riemann sums that when  $\epsilon$ is small enough we have 
\begin{align*}
  &\xi^{-1}_{T}\geq\exp\Big\{-N\int_{0}^{T}\sum_{j=1}^{k}\Big[\mu^{\epsilon,j}_{t}\log\Big(\frac{\mu^{\epsilon,j}_{t}}{\beta_{j}(\phi^{\epsilon}_{t})}\Big) -\mu^{\epsilon,j}_{t}+\beta_{j}(\phi^{\epsilon}_{t})\Big]dt-N (kTC\delta+k\gamma)\Big\}\\
   &\geq\exp\{-N(I_{T}(\phi^{\epsilon}|\mu^{\epsilon})+(kTC\delta+k\gamma))\}\quad\text{on the event}\quad\{\|Z^{N}-\phi^{\epsilon}\|_{T}<\delta\}\cap B.
\end{align*}
Then  for any $\eta>0$, there exists $\delta>0$ and $\gamma>0$ such that for $N$ large enough we have
\begin{equation*}
  \xi^{-1}_{T}\geq\exp\{-N(I_{T}(\phi^{\epsilon}|\mu^{\epsilon})+\eta/2)\}
\end{equation*}
Moreover from corollary \ref{cogirsanov}
\begin{align*}
  &\mathbb{P}_{y}(\|Z^{N}-\phi^{\epsilon}\|_{T}<\delta)\geq\widetilde{\mathbb{E}}\Big(\xi^{-1}_{T}.\mathfrak{1}_{\{\|Z^{N}-\phi^{\epsilon}\|_{T}<\delta\}}\Big)\\
  &\geq\widetilde{\mathbb{E}}_{y}\Big(\xi^{-1}_{T}.\mathfrak{1}_{\{\{\|Z^{N}-\phi^{\epsilon}\|_{T}<\delta\}\cap B\}}\Big)\\
  &\geq\exp\{-N(I_{T}(\phi^{\epsilon}|\mu^{\epsilon})+\eta/2)\}\widetilde{\mathbb{P}}_{y}(\{\|Z^{N}-\phi^{\epsilon}\|_{T}<\delta\}\cap B)
\end{align*}
\end{proof}

To conclude this proof it is enough to establish the following lemma:

\begin{lemma}
For $z\in A$, $\phi\in\mathcal{AC}_{T,A}$, $\phi_{0}=z$, there exists $a_{0}$ such that for any $a<a_{0}$, $\epsilon>0$ the polygonal approximation $\phi^{\epsilon}$ of $\phi^{a}$ defined by \eqref{approxa} has the property that there exists $\tilde{\delta}>0$ such that for all $y$, $|y-z|<\tilde{\delta}$
\begin{equation*}
  \lim_{N\to\infty}\widetilde{\mathbb{P}}_{y}(\{\|Z^{N}-\phi^{\epsilon}\|_{T}<\delta\}\cap B)=1
\end{equation*}
\end{lemma}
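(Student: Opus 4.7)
The plan is to decouple the event $\{\|Z^N-\phi^\epsilon\|_T<\delta\}\cap B$ into its two parts and prove each separately under $\widetilde{\mathbb P}_y$, then conclude by a union bound. The key observation is that under $\widetilde{\mathbb P}_y$ the intensities are the deterministic piecewise-constant functions $N\mu^{\epsilon,j}_t$, uniformly bounded by $NL$, so $Z^N$ becomes a Poisson-driven SDE with no state feedback in the drift — essentially as simple as the setting of Theorem~\ref{LLN}.

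First, for the sup-norm piece I would write
\begin{equation*}
Z^N(t)=\frac{[Ny]}{N}+\frac{1}{N}\sum_{j=1}^k h_j\,P_j\!\Bigl(N\!\int_0^t\mu^{\epsilon,j}_s\,ds\Bigr)
\end{equation*}
and apply the uniform Poisson LLN $\sup_{u\le v}|N^{-1}P_j(Nu)-u|\to 0$ a.s.\ already used in the proof of Theorem~\ref{LLN}. This directly yields $\|Z^N-\psi\|_T\to 0$ a.s., where $\psi(t)=y+\sum_j h_j\int_0^t\mu^{\epsilon,j}_s\,ds$; no Gronwall step is needed since the drift is deterministic. Because $\phi^\epsilon$ solves the same linear equation with initial value $\phi^a_0=(1-a)z+az_0$, we have $\|\psi-\phi^\epsilon\|_T=|y-\phi^a_0|\le\tilde\delta+c_1 a$. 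Choosing $\tilde\delta$ small enough, and if needed shrinking $a$, so that $\tilde\delta+c_1 a<\delta/2$ finishes this piece.

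Second, for $B_j$ I would work on the event of the first piece. Since $\phi^\epsilon\in R^{a/2}$, $\beta_j(\phi^\epsilon_{\ell\epsilon})\ge C_{a/2}>0$, and $\|Z^N-\phi^\epsilon\|_T<\delta$ together with the Lipschitz bound on $\beta_j$ gives $\beta_j(Z^N(\tau_p^{-}))\ge C_{a/2}/2$ once $\delta<C_{a/2}/(2C)$. Hence whenever $\mu^{\epsilon,j}_{\ell\epsilon}>0$,
\begin{equation*}
\Bigl|\log\frac{\beta_j(Z^N(\tau_p^{-}))}{\mu^{\epsilon,j}_{\ell\epsilon}}-\log\frac{\beta_j(\phi^\epsilon_{\ell\epsilon})}{\mu^{\epsilon,j}_{\ell\epsilon}}\Bigr|\le\frac{2C\delta}{C_{a/2}}.
\end{equation*}
When $\mu^{\epsilon,j}_{\ell\epsilon}=0$, no type-$j$ jump occurs on $[\ell\epsilon,(\ell+1)\epsilon)$ under $\widetilde{\mathbb P}_y$ and the convention $0\log 0=0$ makes the corresponding contributions vanish on both sides of $B_j$. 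Let $N_\ell^j$ count the type-$j$ jumps on $[\ell\epsilon,(\ell+1)\epsilon)$; under $\widetilde{\mathbb P}_y$ it is Poisson with mean $N\epsilon\mu^{\epsilon,j}_{\ell\epsilon}$, so $N_\ell^j/N\to\epsilon\mu^{\epsilon,j}_{\ell\epsilon}$ in probability. Summing over the $T/\epsilon$ grid points and using the displayed bound, I would shrink $\delta$ once more to absorb the Lipschitz error within $\gamma/2$, and the Poisson LLN brings the remaining residual below $\gamma/2$ with probability tending to one, yielding $\widetilde{\mathbb P}_y(B_j)\to 1$.

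The one non-routine point will be keeping the log ratios finite — i.e.\ $\beta_j(Z^N(\tau_p^{-}))$ bounded away from zero at every type-$j$ jump time. This is exactly what the chain of approximations in Lemmas~\ref{le11}--\ref{le13} was designed to deliver: $\phi^\epsilon\in R^{a/2}$ forces the uniform lower bound $C_{a/2}$ on the $\beta_j$ along $\phi^\epsilon$, and shrinking $\delta$ relative to $C_{a/2}$ transfers it to $Z^N$. Everything else is a finite-sum Poisson LLN applied grid point by grid point.
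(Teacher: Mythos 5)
Your proposal is correct and follows essentially the same route as the paper: treat the sup-norm event via a law of large numbers under $\widetilde{\mathbb{P}}_y$, then handle each $B_j$ by replacing $\beta_j(Z^N(\tau_p^-))$ with $\beta_j(\phi^\epsilon_{\lfloor\tau_p/\epsilon\rfloor\epsilon})$ at Lipschitz cost $O(Q\delta/C_{a/2})$ and controlling the centered Poisson jump counts on each grid interval (the paper uses Chebyshev where you invoke the weak LLN, which is equivalent here since there are finitely many intervals). Your observation that under $\widetilde{\mathbb{P}}_y$ the intensities are deterministic, so no Gronwall step is needed and $\|\psi-\phi^\epsilon\|_T=|y-\phi^a_0|$ exactly, is a clean simplification of the paper's appeal to Theorem \ref{LLN}, and your explicit treatment of the $\mu^{\epsilon,j}_{\ell\epsilon}=0$ case is a point the paper leaves implicit.
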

\begin{proof}
It is enough to prove that $\lim_{N\to\infty}\widetilde{\mathbb{P}}_{y}(\|Z^{N}-\phi^{\epsilon}\|_{T}<\delta)=1$ and that for all $1\leq j\leq k$, $\lim_{N\to\infty}\widetilde{\mathbb{P}}_{y}(\{\|Z^{N}-\phi^{\epsilon}\|_{T}<\delta\}\cap B_{j}^{c})=0$. The first limit follows from Theorem \ref{LLN} for processes under the probability $\widetilde{\mathbb{P}}_{y}$ provided that we choose $a_{0}$ and $\tilde{\delta}<\delta/2$ in suitable way.
We now establish that $\widetilde{\mathbb{P}}_{y}(\|Z^{N}-\phi^{\epsilon}\|_{T}<\delta\cap B_{j}^{c})\to0$ as $N\to\infty$, for any $1\leq j\leq k$.

We have $\sup_{p}|Z^{N}(\tau_{p})-\phi^{\epsilon}_{\tau_{p}}|<\delta$ on $\{\|Z^{N}-\phi^{\epsilon}\|_{T}<\delta\}$ and we can choose $\epsilon$ small enough such that $\sup_{p}|\phi^{\epsilon}_{\tau_{p}}-\phi^{\epsilon}_{\lfloor\tau_{p}/\epsilon\rfloor\epsilon}|<\delta$ and thus $\sup_{p}|Z^{N}(\tau_{p})-\phi^{\epsilon}_{\lfloor\tau_{p}/\epsilon\rfloor\epsilon}|<2\delta$.

Note that we have on $\{\|Z^{N}-\phi^{\epsilon}\|_{T}<\delta\}$
\begin{align*}
  \Big|\sum_{p=1}^{Q}\delta_{p}(j)\log\Big(\frac{\beta_{j}(Z^{N}(\tau_{p}^{-}))}{\mu^{\epsilon,j}_{\lfloor\tau_{p}/\epsilon\rfloor\epsilon}}\Big) -\sum_{p=1}^{Q}\delta_{p}(j)\log\Big(\frac{\beta_{j}(\phi^{\epsilon}_{\lfloor\tau_{p}/\epsilon\rfloor\epsilon})}{\mu^{\epsilon,j}_{\lfloor\tau_{p}/\epsilon\rfloor\epsilon}}\Big)\Big|
  &\leq\Big|\sum_{p=1}^{Q}\delta_{p}(j)\log\Big(\frac{\beta_{j}(Z^{N}(\tau_{p}^{-}))}{\beta_{j}(\phi^{\epsilon}_{\lfloor\tau_{p}/\epsilon\rfloor\epsilon})}\Big) \Big| \\ 
  &\leq\frac{2CQ\delta}{C_{a}}\\
\end{align*}
since $|\beta_{j}(Z^{N}(\tau_{p}^{-}))-\beta_{j}(\phi^{\epsilon}_{\lfloor\tau_{p}/\epsilon\rfloor\epsilon})|<2C\delta$. Let $m_{\ell}$ the number of jumps in the interval $[(\ell-1)\epsilon,\ell\epsilon[$ we have
\begin{align*}
  &\Big|\sum_{p=1}^{Q}\delta_{p}(j)\log\Big(\frac{\beta_{j}(Z^{N}(\tau_{p}^{-}))}{\mu^{\epsilon,j}_{\lfloor\tau_{p}/\epsilon\rfloor\epsilon}}\Big) -N\sum_{\ell=1}^{T/\epsilon}\mu^{\epsilon,j}_{\ell\epsilon}\log\Big(\frac{\beta_{j}(\phi^{\epsilon}_{\ell\epsilon})}{\mu^{\epsilon,j}_{\ell\epsilon}}\Big)\epsilon\Big| \\
  &\leq\Big|\sum_{p=1}^{Q}\delta_{p}(j)\log\Big(\frac{\beta_{j}(\phi^{\epsilon}_{\lfloor\tau_{p}/\epsilon\rfloor\epsilon})}{\mu^{\epsilon,j}_{\lfloor\tau_{p}/\epsilon\rfloor\epsilon}}\Big)-N\sum_{\ell=1}^{T/\epsilon}\mu^{\epsilon,j}_{\ell\epsilon}\log\Big(\frac{\beta_{j}(\phi^{\epsilon}_{\ell\epsilon})}{\mu^{\epsilon,j}_{\ell\epsilon}}\Big)\epsilon \Big| \\
  &+\Big|\sum_{p=1}^{Q}\delta_{p}(j)\log\Big(\frac{\beta_{j}(Z^{N}(\tau_{p}^{-}))}{\mu^{\epsilon,j}_{\lfloor\tau_{p}/\epsilon\rfloor\epsilon}}\Big) -\sum_{p=1}^{Q}\delta_{p}(j)\log\Big(\frac{\beta_{j}(\phi^{\epsilon}_{\lfloor\tau_{p}/\epsilon\rfloor\epsilon})}{\mu^{\epsilon,j}_{\lfloor\tau_{p}/\epsilon\rfloor\epsilon}}\Big) \Big| \\
  &\leq\Big|\sum_{\ell=1}^{T/\epsilon}\log\Big(\frac{\beta_{j}(\phi^{\epsilon}_{\ell\epsilon})}{\mu^{\epsilon,j}_{\ell\epsilon}}\Big) \Big(\sum_{p=1}^{m_{\ell}}\delta_{p}(j)-N\mu^{\epsilon,j}_{\ell\epsilon}\epsilon\Big)\Big|+\frac{2CQ\delta}{C_{a}}.
 \end{align*}
As the rate of jumps are constant on the interval $[(\ell-1)\epsilon,\ell\epsilon[$ under $\widetilde{\mathbb{P}}^{N}$, $\sum_{p=1}^{m_{\ell}}\delta_{p}(j)$ is the number of jumps of a Poisson process $P_{j}$ on this interval. So it is a Poisson random variable with mean $N\mu^{\epsilon,j}_{\ell\epsilon}\epsilon$. We deduce from Chebyshev's inequality that
\begin{equation*}
  \widetilde{\mathbb{P}}_{y}\Big(\Big|\log\Big(\frac{\beta_{j}(\phi^{\epsilon}_{\ell\epsilon})}{\mu^{\epsilon,j}_{\ell\epsilon}}\Big) \Big(\sum_{p=1}^{m_{\ell}}\delta_{p}(j)-N\mu^{\epsilon,j}_{\ell\epsilon}\epsilon\Big)\Big|>\frac{N\gamma\epsilon}{2T}\Big) \leq\frac{4T^{2}\sup_{\ell\leq T/\epsilon}\Big(\log^{2}\Big(\frac{\beta_{j}(\phi^{\epsilon}_{\ell\epsilon})}{\mu^{\epsilon,j}_{\ell\epsilon}}\Big)N\mu^{\epsilon,j}_{\ell\epsilon}\epsilon\Big)}{N^{2}\gamma^{2}\epsilon^{2}}.
\end{equation*}
As $C_{a}\leq\beta_{j}(\phi^{\epsilon}_{t})\leq \sigma$ and $\mu^{\epsilon,j}_{t}\leq L$ we have $\sup_{\ell\leq T/\epsilon}\Big(\log^{2}\Big(\frac{\beta_{j}(\phi^{\epsilon}_{\ell\epsilon})}{\mu^{\epsilon,j}_{\ell\epsilon}}\Big)\mu^{\epsilon,j}_{\ell\epsilon}\Big)\leq C(L,a)$. Thus
\begin{align*}
  \widetilde{\mathbb{P}}_{y}(\|Z^{N}-\phi^{\epsilon}\|_{T}<\delta\}\cap B_{j}^{c})&\leq \widetilde{\mathbb{P}}_{y}\Big(\Big|\sum_{\ell=1}^{T/\epsilon}\log\Big(\frac{\beta_{j}(\phi^{\epsilon}_{\ell\epsilon})}{\mu^{\epsilon,j}_{\ell\epsilon}}\Big) \Big(\sum_{p=1}^{m_{\ell}}\delta_{p}(j)-N\mu^{\epsilon,j}_{\ell\epsilon}\epsilon\Big)\Big|+\frac{2CQ\delta}{C_{a}}>N\gamma\Big) \\
  &\leq\frac{4T^{2}C(L,a)}{N\gamma^{2}\epsilon}+\widetilde{\mathbb{P}}_{y}\Big(\frac{2CQ\delta}{C_{a}}\geq \frac{N\gamma}{2}\Big).
\end{align*}
The number of jumps during the period time $T$ under the probability $\widetilde{\mathbb{P}}_{z}$ is the sum of $T/\epsilon$ Poisson random variables with mean $N\sum_{j=1}^{k}\mu^{\epsilon,j}_{\ell\epsilon}\epsilon$. we take $\gamma=\frac{8C\delta}{C_{a}}\sum_{\ell=1}^{T/\epsilon}\sum_{j=1}^{k}\mu^{\epsilon,j}_{\ell\epsilon}\epsilon$  where $\delta$ is chosen such that $\delta/C_{a}$ is small. Therefore, as long as $\sum_{\ell=1}^{T/\epsilon}\sum_{j=1}^{k}\mu^{\epsilon,j}_{\ell\epsilon}>0$, the law of large number for Poisson variables give us
\begin{equation*}
  \widetilde{\mathbb{P}}_{y}\Big(\frac{2CQ\delta}{C_{a}}\geq \frac{N\gamma}{2}\Big)=\widetilde{\mathbb{P}}_{y}\Big(\frac{Q}{N}\geq 2\sum_{\ell=1}^{T/\epsilon}\sum_{j=1}^{k}\mu^{\epsilon,j}_{\ell\epsilon}\epsilon\Big)\rightarrow0
\end{equation*}
as $N\rightarrow\infty$.
\end{proof}

We now deduce from Lemma \ref{preliapprox} the next result follows the argument from
in the proof of  Lemma 3 in \cite{dolgoarshinnykhsample}.
\begin{lemma}\label{lower1}
For $z\in A$, $\phi\in\mathcal{AC}_{T,A}$, $\phi_{0}=z$ and any $\eta>0$, $\delta>0$ there exist $\tilde{\delta}>0$ and $N_{\eta,\delta}$ such that for all $N>N_{\eta,\delta}$,
\begin{equation}
  \inf_{y:|y-z|<\tilde{\delta}}\mathbb{P}_{y}(\|Z^{N}-\phi\|_{T}<\delta)\geq\exp\{-N(I_{T}(\phi)+\eta)\}.
\end{equation}
\end{lemma}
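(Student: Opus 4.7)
The plan is to reduce the statement to Lemma \ref{preliapprox} by a three-step regularization of the target path $\phi$, chaining the approximation lemmas \ref{le11}, \ref{le12}, \ref{le13} so that both the rate-function cost and the supremum distance are controlled simultaneously.

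First, I dispose of the trivial case $I_T(\phi) = \infty$, where the bound is automatic. Assuming $I_T(\phi) < \infty$, I fix $\eta, \delta > 0$ and build a sequence of approximations $\phi \leadsto \phi^a \leadsto \phi^L \leadsto \phi^\epsilon$. By Lemma \ref{le11}, I choose $a > 0$ small enough that $I_T(\phi^a) \le I_T(\phi) + \eta/4$, keeping in mind $\|\phi - \phi^a\|_T \le c_1 a$ and $\phi^a \in R^a$. Applying Lemma \ref{le12} to $\phi^a$ yields $\phi^L \in R^{a/2}$ and $\mu^L \in \mathcal{A}_d(\phi^L)$ with $\mu^{L,j}_t < L$, $\|\phi^a - \phi^L\|_T < c_1 a/2$, and $I_T(\phi^L|\mu^L) \le I_T(\phi^a) + \eta/4$. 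Finally, Lemma \ref{le13} (applied with parameter $a/2$) produces a polygonal approximation $\phi^\epsilon \in R^{a/4}$ with a piecewise-constant driving density $\mu^\epsilon \in \mathcal{A}_d(\phi^\epsilon)$ bounded by $L$, such that $\|\phi^L - \phi^\epsilon\|_T < c_1 a/4$ and $I_T(\phi^\epsilon|\mu^\epsilon) \le I_T(\phi^L|\mu^L) + \eta/4$.

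Chaining the triangle inequality gives $\|\phi - \phi^\epsilon\|_T < 2c_1 a$, while the rate bounds telescope into
$$I_T(\phi^\epsilon | \mu^\epsilon) \le I_T(\phi) + 3\eta/4.$$
Choosing $a$ small enough that $2c_1 a < \delta/2$ ensures that $\{\|Z^N - \phi^\epsilon\|_T < \delta/2\} \subset \{\|Z^N - \phi\|_T < \delta\}$. Now I invoke Lemma \ref{preliapprox} with tolerance $\delta/2$ and parameter $\eta/4$: since $\phi^\epsilon$ and $\mu^\epsilon$ satisfy its hypotheses (polygonal, piecewise-constant driving density bounded by $L$), there exist $\tilde\delta > 0$ and $N_{\eta,\delta}$ such that for all $y$ with $|y - z| < \tilde\delta$ and $N > N_{\eta,\delta}$,
$$\mathbb{P}_y(\|Z^N - \phi^\epsilon\|_T < \delta/2) \ge \exp\{-N(I_T(\phi^\epsilon|\mu^\epsilon) + \eta/4)\} \ge \exp\{-N(I_T(\phi) + \eta)\}.$$
The inclusion of events above then gives the claimed lower bound on $\mathbb{P}_y(\|Z^N - \phi\|_T < \delta)$, uniformly in $y$.

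The only mildly delicate point is initial-condition bookkeeping: $\phi^\epsilon_0 = (1-a)z + a z_0$ differs from $z$ by at most $c_1 a$, so when applying Lemma \ref{preliapprox} (whose neighborhood is centered at $\phi^\epsilon_0$), I shrink $\tilde\delta$ further, say to $\tilde\delta/2 - c_1 a$, so that $|y - z| < \tilde\delta'$ forces $|y - \phi^\epsilon_0|$ to lie within the neighborhood the lemma requires. Once $a$ is fixed small in terms of $\delta$ and $\eta$, and $\epsilon$ is fixed small in terms of $a$ as in Lemma \ref{le13}, all the remaining parameters ($L$, $\tilde\delta$, $N_{\eta,\delta}$) are produced by the cited lemmas, and the proof is complete.
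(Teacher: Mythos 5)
Your proposal is correct and follows essentially the same route as the paper: chaining Lemmas \ref{le11}, \ref{le12} and \ref{le13} with an $\eta/4$ budget at each stage, choosing $a$ so that the accumulated sup-norm error is below $\delta/2$, and then invoking Lemma \ref{preliapprox} on the resulting polygonal path with bounded piecewise-constant density. Your extra remark on recentering the $\tilde\delta$-neighborhood at $\phi^{\epsilon}_{0}=(1-a)z+az_{0}$ rather than at $z$ is a small bookkeeping point the paper leaves implicit, and it is handled correctly.
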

\begin{proof}
For $\delta, \eta>0$ let $\phi\in\mathcal{AC}_{T,A}$, $\phi_{0}=z$ such that $I_{T}(\phi)<\infty$ then using Lemma \ref{le11} we have that there exists  $a_{\eta}>0$ such that for all $a<a_{\eta}$ there exists $\phi^{a}\in R^{a}$ such that $\|\phi-\phi^{a}\|_{T}<c_{1}a$ and $I_{T}(\phi^{a})\leq I_{T}(\phi)+\eta/4$. 
As $I_{T}(\phi^{a})<\infty$ using the lemma \ref{le12} we deduce that there exists $L>0$ and $\phi^{a,L}\in R^{a/2}$ is such that $\|\phi^{a}-\phi^{a,L}\|_{T}<c_{1}\frac{a}{2}$ and $I_{T}(\phi^{a,L}|\mu^{a,L})\leq I_{T}(\phi^{a})+\eta/4$ where $\mu^{a,L}\in\mathcal{A}_{d}(\phi^{a,L})$ such that $\mu^{a,L,j}_{t}<L$, $j=1,...,k$. 
Now we can deduce from Lemma \ref{le13} that for all $\epsilon>0$ the polygonal approximation $\phi^{a,L,\epsilon}$ of $\phi^{a,L}$ satisfies $\|\phi^{a,L}-\phi^{a,L,\epsilon}\|_{T}<c_{1}\frac{a}{2}$ and $I_{T}(\phi^{a,L,\epsilon}|\mu^{a,L,\epsilon})\leq I_{T}(\phi^{a,L}|\mu^{a,L})+\eta/4$ where $\mu^{a,L,\epsilon}\in\mathcal{A}_{d}(\phi^{a,L,\epsilon})$ is such that $\mu^{a,L,\epsilon,j}_{t}<L$, $j=1,...,k$.
Now we choose $a$ such that $2c_{1}a<\delta/2$ and we have 
\begin{align*}
  \inf_{y:|y-z|<\tilde{\delta}}\mathbb{P}_{y}\Big(\|Z^{N}-\phi\|_{T}<\delta\Big)&\geq \inf_{y:|y-z|<\tilde{\delta}}\mathbb{P}_{y}\Big(\|Z^{N}-\phi\|_{T}<\frac{\delta}{2}+2c_{1}a\Big) \\
  &\geq \inf_{y:|y-z|<\tilde{\delta}}\mathbb{P}_{y}\Big(\|Z^{N}-\phi^{a}\|_{T}<\frac{\delta}{2}+c_{1}a\Big) \\
  &\geq\inf_{y:|y-z|<\tilde{\delta}}\mathbb{P}_{y}\Big(\|Z^{N}-\phi^{a,L}\|_{T}<\frac{\delta}{2}+c_{1}\frac{a}{2}\Big) \\
  &\geq\inf_{y:|y-z|<\tilde{\delta}}\mathbb{P}_{y}\Big(\|Z^{N}-\phi^{a,L,\epsilon}\|_{T}<\frac{\delta}{2}\Big)\\
  &\geq\exp\{-N(I_{T}(\phi^{a,L,\epsilon}|\mu^{a,L,\epsilon})+\eta/4)\}\\
  &\geq\exp\{-N(I_{T}(\phi^{a,L}|\mu^{a,L})+\eta/2)\}\\
  &\geq\exp\{-N(I_{T}(\phi^{a})+3\eta/4)\}\\
  &\geq\exp\{-N(I_{T}(\phi)+\eta)\}
\end{align*}
where we have used the lemma \ref{preliapprox} at the $5^{th}$ inequality.
\end{proof}

 We finish the proof of the lower bound by the following theorem
\begin{theorem}\label{lowerbound1}
 For any  open subset $G$ of $D_{T,A}$ and $z\in A$,
\begin{equation}\label{lower}
\liminf_{N\to\infty}\frac{1}{N}\log\mathbb{P}_{z}^{N}(G)\geq -\inf_{\phi\in G,\phi_{0}=z}I_{T}(\phi).
\end{equation}
\end{theorem}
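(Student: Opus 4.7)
The plan is to reduce the theorem to a direct application of Lemma~\ref{lower1}. The quantity $\inf_{\phi\in G,\,\phi_{0}=z}I_{T}(\phi)$ may be $+\infty$, in which case the bound is trivial; so I would fix an arbitrary $\phi\in G$ with $\phi_{0}=z$ and $I_{T}(\phi)<\infty$ and aim to show $\liminf_{N\to\infty}N^{-1}\log\mathbb{P}_{z}^{N}(G)\geq -I_{T}(\phi)$. Since $G$ is open in $D_{T,A}$ and $\phi\in\mathcal{AC}_{T,A}$ is continuous on $[0,T]$, the Skorokhod and uniform topologies agree at $\phi$, so there exists $\delta>0$ with $\{\psi\in D_{T,A}:\|\psi-\phi\|_{T}<\delta\}\subset G$.

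Next I would apply Lemma~\ref{lower1} at the point $y=z$, which is legitimate because $|z-z|=0<\tilde{\delta}$ for any $\tilde{\delta}>0$ produced by the lemma. This gives, for every $\eta>0$ and all $N$ large enough,
\begin{equation*}
\mathbb{P}_{z}^{N}(G)\;\geq\;\mathbb{P}_{z}\bigl(\|Z^{N}-\phi\|_{T}<\delta\bigr)\;\geq\;\exp\bigl\{-N(I_{T}(\phi)+\eta)\bigr\}.
\end{equation*}
Taking logarithms, dividing by $N$, and passing to the $\liminf$ yields
\begin{equation*}
\liminf_{N\to\infty}\frac{1}{N}\log\mathbb{P}_{z}^{N}(G)\;\geq\;-(I_{T}(\phi)+\eta),
\end{equation*}
and letting $\eta\downarrow 0$ removes the $\eta$.

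Finally, since $\phi$ was an arbitrary path in $G$ with $\phi_{0}=z$ and finite rate, taking the supremum of the right-hand side over such $\phi$ (equivalently, the negative infimum of $I_{T}(\phi)$) gives the desired bound $-\inf_{\phi\in G,\,\phi_{0}=z}I_{T}(\phi)$. All of the real work has already been done in Lemma~\ref{lower1} (and upstream, in Lemmas~\ref{le11}--\ref{le13} and~\ref{preliapprox}); the only slightly delicate point at this stage is the passage from Skorokhod-openness to a sup-norm ball around $\phi$, which is standard because $\phi$ is continuous and so is not a ``true'' Skorokhod element. Hence I do not anticipate any substantive obstacle in this final step.
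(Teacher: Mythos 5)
Your proposal is correct and follows essentially the same route as the paper: both reduce the theorem to the local estimate of Lemma~\ref{lower1} applied at $y=z$, using openness of $G$ to place a sup-norm ball around a path of (near-)minimal rate inside $G$; whether one first fixes a near-minimizer $\phi^{\eta}$ with $I_{T}(\phi^{\eta})\leq I+\eta$ (as the paper does) or argues for arbitrary $\phi$ and optimizes at the end (as you do) is immaterial. Your remark on the Skorokhod versus uniform topology is a fine point the paper glosses over, and in fact the needed inclusion holds for any $\phi$ since the Skorokhod distance is dominated by the uniform distance.
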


\begin{proof}
Note that in fact \eqref{loexplower} and \eqref{lower} are equivalent. We only have to show that \eqref{lower} follows from \eqref{loexplower}. To this end let $I=\inf_{\phi\in G,\phi_{0}=z}I_{T}(\phi)<\infty$ then, for $\eta>0$ there exists a $\phi^{\eta}\in G$, $\phi^{\eta}_{0}=z$ such that $I_{T}(\phi^{\eta})\leq I+\eta$. Moreover we can choose $\delta=\delta(\phi^{\eta})$ small enough such that $\{\|Z^{N}-\phi^{\eta}\|_{T}<\delta\}\subset G$. And then $\mathbb{P}_{z}(\|Z^{N}-\phi^{\eta}\|_{T}<\delta)\leq \mathbb{P}_{z}^{N}(G)$. This implies from the inequality \eqref{loexplower} that for all $\eta>0$,
\begin{align*}
\liminf_{N\to\infty}\frac{1}{N}\log\mathbb{P}_{z}^{N}(G)&\geq\liminf_{N\to\infty}\frac{1}{N}\log\mathbb{P}_{z}(\|Z^{N}-\phi^{\eta}\|_{T}<\delta) \\
&\geq-I_{T}(\phi^{\eta}) \\
&\geq -I-\eta
\end{align*}
and then 
\begin{equation*}
\liminf_{N\to\infty}\frac{1}{N}\log\mathbb{P}_{z}^{N}(G)\leq-I.
\end{equation*}
\end{proof}

\begin{corollary}
For any open subset $G$ of $D_{T,A}$ and any compact subset $K$ of $A$,
  \begin{equation*}
\liminf_{N\to\infty}\frac{1}{N}\log\inf_{z\in K}\mathbb{P}_{z}(Z^{N}\in G)\geq-\sup_{z\in K}\inf_{\phi\in G, \phi_{0}=z}I_{T}(\phi).
\end{equation*}
\end{corollary}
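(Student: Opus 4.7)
The plan is to reduce the corollary to Lemma \ref{lower1} by a standard finite-cover argument that exploits the compactness of $K$. Set $J(z):=\inf_{\phi\in G,\phi_{0}=z}I_{T}(\phi)$ and $S:=\sup_{z\in K}J(z)$. If $S=\infty$ the claimed inequality reads $\liminf\geq-\infty$ and there is nothing to prove, so from now on assume $S<\infty$, which in particular forces $J(z)<\infty$ for every $z\in K$.

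Fix $\eta>0$. For each $z\in K$, choose $\phi^{z}\in G$ with $\phi^{z}_{0}=z$ and $I_{T}(\phi^{z})\leq J(z)+\eta\leq S+\eta$. Since $G$ is open in $D_{T,A}$, there is $\delta_{z}>0$ with
\begin{equation*}
\{\psi\in D_{T,A}:\|\psi-\phi^{z}\|_{T}<\delta_{z}\}\subset G.
\end{equation*}
Applying Lemma \ref{lower1} with this $\phi^{z}$, $\delta_{z}$, and $\eta$ produces $\tilde{\delta}_{z}>0$ and $N_{z}\in\mathbb{N}$ such that for every $y$ with $|y-z|<\tilde{\delta}_{z}$ and every $N>N_{z}$,
\begin{equation*}
\mathbb{P}_{y}(Z^{N}\in G)\;\geq\;\mathbb{P}_{y}(\|Z^{N}-\phi^{z}\|_{T}<\delta_{z})\;\geq\;\exp\{-N(I_{T}(\phi^{z})+\eta)\}\;\geq\;\exp\{-N(S+2\eta)\}.
\end{equation*}

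The open balls $\{B(z,\tilde{\delta}_{z}):z\in K\}$ cover the compact set $K$, so we extract a finite subcover $\{B(z_{i},\tilde{\delta}_{z_{i}})\}_{i=1}^{m}$ and set $N_{0}:=\max_{1\leq i\leq m}N_{z_{i}}$. For each $N>N_{0}$ and any $y\in K$, choose $i$ with $|y-z_{i}|<\tilde{\delta}_{z_{i}}$; the previous display yields $\mathbb{P}_{y}(Z^{N}\in G)\geq\exp\{-N(S+2\eta)\}$, and taking the infimum over $y\in K$ gives
\begin{equation*}
\frac{1}{N}\log\inf_{y\in K}\mathbb{P}_{y}(Z^{N}\in G)\;\geq\;-(S+2\eta)\qquad\text{for all }N>N_{0}.
\end{equation*}
Passing to the $\liminf$ and then letting $\eta\downarrow 0$ yields the desired inequality.

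There is no genuine obstacle here: all the analytic work has already been done in Lemma \ref{lower1}, which provides a neighborhood of uniformity around each base point $z$; the role of compactness is purely combinatorial, allowing us to replace the potentially uncountable collection of threshold indices $\{N_{z}\}_{z\in K}$ by a single $N_{0}$. One minor point worth noting is that the suboptimal path $\phi^{z}$ is chosen for each $z$ separately, so we do not need any continuity of $z\mapsto J(z)$ (such as Lemma \ref{semiconz}); it is enough that $J(z_{i})\leq S$ at the finitely many centers of the subcover.
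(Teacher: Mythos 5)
Your proof is correct and follows essentially the same finite-subcover route the paper takes (reducing to Lemma \ref{lower1}, using compactness to extract a single threshold $N_{0}$, and sending $\eta\downarrow 0$). In fact, you are slightly more careful than the paper's write-up: you make explicit the choice of a near-minimizer $\phi^{z}\in G$ with $\phi^{z}_{0}=z$ and the radius $\delta_{z}$ for which the $\delta_{z}$-ball around $\phi^{z}$ lies in $G$, whereas the paper asserts the intermediate bound $\frac{1}{N}\log\mathbb{P}_{y}(Z^{N}\in G)\geq-\inf_{\phi\in G,\phi_{0}=z}I_{T}(\phi)$ directly (absorbing the slack into $I_{K}^{\eta}$); you also handle the case $S=\infty$ at the outset. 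Your closing remark that no semicontinuity of $z\mapsto J(z)$ is needed is right as well — Lemma \ref{semiconz} is only invoked in the paper for the corresponding upper-bound corollary.
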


\begin{proof}
The arguments are the same as in the proof of Corollary 5.6.15 in \cite{Dembo2009}. Let
\begin{equation*}
I_{K}:=\sup_{z\in K}\inf_{\phi\in G, \phi_{0}=z}I_{T}(\phi).
\end{equation*}
For  $\eta>0$ fix, let $I_{K}^{\eta}:=\max\{I_{K}+\eta, \eta^{-1}\}$. Then from \eqref{lower1} it follows that for any $z\in K$, there exists a $N_{z}$ such that for all $N>N_{z}$ and $y\in B(z,\frac{1}{N_{z}})$,
\begin{equation*}
\frac{1}{N}\log\mathbb{P}_{y}(Z^{N}\in G)\geq-\inf_{\phi\in G, \phi_{0}=z}I_{T}(\phi)\geq-I_{K}^{\eta}.
\end{equation*}
And then
\begin{equation*}
\frac{1}{N}\log\inf_{y\in B(z,\frac{1}{N_{z}})}\mathbb{P}_{y}(Z^{N}\in G)\geq-I_{K}^{\eta}.
\end{equation*}
As $K$ is compact, there exits a finite sequence $(z_{i})_{1\leq i\leq m}\subset K$  such that  $K\subset\bigcup_{i=1}^{m}B(z_{i},\frac{1}{N_{z_{i}}})$. Then for $N\geq \max_{1\leq i\leq m}N_{z_{i}}$,
\begin{equation*}
\frac{1}{N}\log\inf_{y\in K}\mathbb{P}_{y}(Z^{N}\in G)\geq-I_{K}^{\eta}.
\end{equation*}
It first remains to take $\liminf$ as $N\to\infty$ and then let $\eta$ tend to $0$ to have result.
\end{proof}

\section{The Upper Bound} \label{Secupper}

For all $\phi\in D_{T,A}$,  and $F\subset D_{T,A}$ we define
\begin{equation}\label{hset}
  \rho_{T}(\phi, F)=\inf_{\psi\in F}\|\phi-\psi\|_{T}.
\end{equation}
For $z\in A$, $\delta,s>0$ we define the set
\begin{equation*}
   F_{\delta}^{s}=\{\phi\in D_{T,A}: \rho_{T}(\phi, \Phi(s))\geq\delta\},
\end{equation*} 
 where $\Phi(s)=\{\psi\in D_{T,A}: I_{T}(\psi)\leq s\}$.
 
We start by proving the following lemma which will be enough to conclude the upper bound. 
\begin{lemma}\label{upperb}
For $z\in A$, $\delta$, $\eta$ and $s>0$ there exists $N_{0}\in\mathds{N}$ such that
\begin{equation}\label{loexpupper}
  \mathbb{P}^{N}_{z}(F_{\delta}^{s})\leq\exp\{-N(s-\eta)\}
\end{equation}
whenever $N\geq N_{0}$.
\end{lemma}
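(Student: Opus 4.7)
The strategy is to discretize time and the state space, reduce \eqref{loexpupper} to a finite union bound over ``cell trajectories'', and then bound the probability of each such trajectory by a Chernov--type estimate using the exponential martingales associated with the compensated Poisson processes driving $Z^N$.

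Fix $\epsilon>0$ with $T/\epsilon\in\mathds{N}$, to be chosen later, and a finite partition $\{C_\alpha\}_{\alpha\in\mathcal{G}}$ of $A$ into cells of diameter at most $\delta/8$, each with a chosen center $c_\alpha\in C_\alpha$. For $\vec{\alpha}=(\alpha_0,\dots,\alpha_{T/\epsilon})\in\mathcal{G}^{T/\epsilon+1}$ let $\phi^{\vec{\alpha}}$ denote the polygonal path interpolating the $(c_{\alpha_\ell})_\ell$, and set
\begin{equation*}
E_{\vec{\alpha}}=\{Z^N(\ell\epsilon)\in C_{\alpha_\ell}\text{ for all }\ell=0,\dots,T/\epsilon\}.
\end{equation*}
Applying Lemma \ref{le17} to control the total number of jumps of $Z^N$ on each subinterval $[\ell\epsilon,(\ell+1)\epsilon]$ (a sum of $k$ Poisson variables with parameter at most $N\sigma\epsilon$) yields, for $\epsilon$ small enough and $N$ large,
\begin{equation*}
\mathbb{P}_z^N(\mathcal{O}_\epsilon^c)\le\exp\{-N(s+1)\},\qquad\mathcal{O}_\epsilon:=\bigcap_{\ell=0}^{T/\epsilon-1}\bigl\{\sup_{t\in[\ell\epsilon,(\ell+1)\epsilon]}|Z^N(t)-Z^N(\ell\epsilon)|<\delta/8\bigr\}.
\end{equation*}
Consequently, on $\mathcal{O}_\epsilon\cap E_{\vec{\alpha}}$ we have $\|Z^N-\phi^{\vec{\alpha}}\|_T<\delta/2$, and so on $\mathcal{O}_\epsilon\cap E_{\vec{\alpha}}\cap F_\delta^s$ we have $\rho_T(\phi^{\vec{\alpha}},\Phi(s))\ge\delta/2$. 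Since $\Phi(s+1)$ is compact and $I_T$ is lower semicontinuous, a standard compactness argument then yields an $\eta'=\eta'(s,\delta)>0$ such that any $\phi\in\mathcal{AC}_{T,A}$ with $\rho_T(\phi,\Phi(s))\ge\delta/2$ satisfies $I_T(\phi)\ge s+\eta'$; hence $I_T(\phi^{\vec{\alpha}})\ge s+\eta'$ on the event above.

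The core step is the probability bound for each cell trajectory: for every $\vec{\alpha}$ we claim
\begin{equation*}
\mathbb{P}_z^N(E_{\vec{\alpha}}\cap\mathcal{O}_\epsilon)\le\exp\{-N(I_T(\phi^{\vec{\alpha}})-C_1(\delta+\epsilon))\},
\end{equation*}
where $C_1$ depends only on $T$, $\sigma$ and $C$. To prove it, pick $\theta^{\vec{\alpha}}:[0,T]\to\mathbb{R}^d$ piecewise constant on the intervals $[\ell\epsilon,(\ell+1)\epsilon)$, each $\theta^{\vec{\alpha}}_\ell$ being (approximately) the maximizer of $\theta\mapsto\ell(\phi^{\vec{\alpha}}_{\ell\epsilon},(\phi^{\vec{\alpha}}_{(\ell+1)\epsilon}-\phi^{\vec{\alpha}}_{\ell\epsilon})/\epsilon,\theta)$ in the definition of $L$. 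The associated exponential process
\begin{equation*}
M_t^{\theta^{\vec{\alpha}}}=\exp\Bigl\{\sum_{p:\tau_p\le t}\sum_j\delta_p(j)\langle\theta^{\vec{\alpha}}_{\tau_p},h_j\rangle-N\int_0^t\sum_j\beta_j(Z^N_s)\bigl(e^{\langle\theta^{\vec{\alpha}}_s,h_j\rangle}-1\bigr)ds\Bigr\}
\end{equation*}
is a non-negative $\mathbb{P}^N$-martingale with $\mathbb{E}[M_T^{\theta^{\vec{\alpha}}}]=1$. On $\mathcal{O}_\epsilon\cap E_{\vec{\alpha}}$ the sup-norm proximity $\|Z^N-\phi^{\vec{\alpha}}\|_T<\delta/2$ together with the Lipschitz continuity of the $\beta_j$ allow one to bound $\log M_T^{\theta^{\vec{\alpha}}}$ from below by $N\sum_\ell\epsilon\,\ell(\phi^{\vec{\alpha}}_{\ell\epsilon},\dot\phi^{\vec{\alpha}}_{\ell\epsilon+},\theta^{\vec{\alpha}}_\ell)-NC_1(\delta+\epsilon)$, which by construction of the $\theta^{\vec{\alpha}}_\ell$ converges to $NI_T(\phi^{\vec{\alpha}})-NC_1(\delta+\epsilon)$ as $\epsilon\to 0$. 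Markov's inequality applied to $M_T^{\theta^{\vec{\alpha}}}$ then gives the claimed bound.

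Combining the three steps and the union bound over the $|\mathcal{G}|^{T/\epsilon+1}=:M$ cell sequences, for $\delta$ and $\epsilon$ small enough that $C_1(\delta+\epsilon)<\eta'/2$,
\begin{equation*}
\mathbb{P}_z^N(F_\delta^s)\le\mathbb{P}_z^N(\mathcal{O}_\epsilon^c)+M\exp\{-N(s+\eta'/2)\}\le\exp\{-N(s-\eta)\}
\end{equation*}
for $N\ge N_0$. The main obstacle is the probability bound on $E_{\vec{\alpha}}\cap\mathcal{O}_\epsilon$: the Legendre-transform maximizer may be unbounded precisely when $I_T(\phi^{\vec{\alpha}})=\infty$ (a slope not in the conical hull of the $h_j$), and these ``forbidden'' cell trajectories must be handled by a separate Poisson large-deviation estimate (again via Lemma \ref{le17}), since the exponential martingale argument requires a bounded $\theta^{\vec{\alpha}}$.
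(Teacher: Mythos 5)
Your route is the classical Freidlin--Wentzell/Shwartz--Weiss one: discretize time and space, take a union bound over cell trajectories, and bound each trajectory's probability by Markov's inequality applied to the exponential martingale with a near-optimal piecewise-constant $\theta^{\vec{\alpha}}$. The paper does something genuinely different: it never dualizes. It pushes the process into the interior via $Z^N_a=(1-a)Z^N+az_0$, takes the polygonal interpolation $\Upsilon$ of $Z^N_a$ with the \emph{explicit} control $\mu$ given by the rescaled Poisson increments, and bounds $\mathbb{E}_z\big(e^{\alpha N I_T(\Upsilon|\mu)}\mathbf{1}_{B_\epsilon}\big)$ for $\alpha<1$ by iterated conditioning and a direct computation with the Poisson distribution, using Assumptions \ref{assump1}.\ref{assump13}--\ref{assump1}.\ref{assump14} to couple $a=h(\epsilon)$ with $g(\epsilon)$ so that the error terms vanish. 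That design is not cosmetic; it is what neutralizes the vanishing of the rates at $\partial A$.

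This points to the genuine gap in your core estimate. You claim $\log M_T^{\theta^{\vec{\alpha}}}\ge N\sum_\ell\epsilon\,\ell(\phi^{\vec{\alpha}}_{\ell\epsilon},\dot\phi^{\vec{\alpha}}_{\ell\epsilon+},\theta^{\vec{\alpha}}_\ell)-NC_1(\delta+\epsilon)$ with $C_1$ depending only on $T$, $\sigma$, $C$. Both error terms in that comparison are proportional to the size of $\theta^{\vec{\alpha}}$: replacing $\beta_j(Z^N_s)$ by $\beta_j(\phi^{\vec{\alpha}}_s)$ in the compensator costs $N\int_0^T\sum_j|\beta_j(Z^N_s)-\beta_j(\phi^{\vec{\alpha}}_s)|\,|e^{\langle\theta^{\vec{\alpha}}_s,h_j\rangle}-1|\,ds\le NkTC\delta\,\sup_{\ell,j}(e^{\langle\theta^{\vec{\alpha}}_\ell,h_j\rangle}+1)$, and replacing the empirical increments $Z^N((\ell+1)\epsilon)-Z^N(\ell\epsilon)$ by $\phi^{\vec{\alpha}}_{(\ell+1)\epsilon}-\phi^{\vec{\alpha}}_{\ell\epsilon}$ in the jump sum costs $N(T/\epsilon)\sup_\ell|\theta^{\vec{\alpha}}_\ell|\,O(\delta)$. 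A near-maximizer satisfies $e^{\langle\theta^{\vec{\alpha}}_\ell,h_j\rangle}\approx\mu^j_\ell/\beta_j(\phi^{\vec{\alpha}}_{\ell\epsilon})$, which is unbounded over the cell trajectories whose centers approach the set where some $\beta_j$ vanishes --- and these trajectories cannot be discarded, since they can carry finite, even small, rate. So $C_1$ necessarily depends on $\vec{\alpha}$ and blows up near $\partial A$; your final choice ``$\delta,\epsilon$ small enough that $C_1(\delta+\epsilon)<\eta'/2$'' cannot be made uniformly over $\vec{\alpha}$. This is exactly the obstruction the paper's introduction attributes to \cite{Shwartz1995} (rates bounded away from zero) versus the present setting, and it is why the proof here routes everything through $B^a$ and Assumption \ref{assump1}.\ref{assump14}. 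Your closing remark only quarantines the trajectories with $I_T(\phi^{\vec{\alpha}})=\infty$; the problem already occurs for trajectories with finite rate function but small rates. To repair your approach you would need to first replace $Z^N$ by a version valued in $B^a$ (or restrict to cell trajectories in $B^a$ and control the rest), with $a$ tied to $\epsilon$ and $\delta$ through Assumption \ref{assump1}.\ref{assump14}, which essentially reconstructs the paper's argument.
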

\begin{proof}
Let $Z^{N}_{a}(t)=(1-a)Z^{N}(t)+az_{0}$ then $\|Z^{N}-Z^{N}_{a}\|<c_{1}a$ and for all $c_{1}a<\delta(d-1)/d$ we have
\begin{align*}
 \mathbb{P}^{N}_{z}(F_{\delta}^{s})&= \mathbb{P}_{z}\Big(\rho_{T}(Z^{N},\Phi(s))\geq\delta\Big) \\
 &\leq\mathbb{P}_{z}\Big(\rho_{T}(Z^{N}_{a},\Phi(s))\geq\frac{\delta}{d}\Big).
\end{align*}
We now approximate the paths $Z^{N}$ by smoother paths. Let $\epsilon>0$ be such that $T/\epsilon\in\mathds{N}$. We construct a polygonal approximation of $Z^{N}_{a}$ defined for all $t\in [\ell\epsilon,(\ell+1)\epsilon[$ by
\begin{equation*}
  \Upsilon_{t}=\Upsilon^{a,\epsilon}_{t}=Z^{N}_{a}(\ell\epsilon)\frac{(\ell+1)\epsilon-t}{\epsilon}+Z^{N}_{a}((\ell+1)\epsilon)\frac{t-\ell\epsilon}{\epsilon}.
\end{equation*}
The event $\{\|Z^{N}_{a}-\Upsilon\|_{T}<\frac{\delta}{2d}\}\cap\{\rho_{T}(Z^{N}_{a},\Phi(s))\geq\frac{\delta}{d}\}$ is contained in $\{\rho_{T}(\Upsilon,\Phi(s))\geq\frac{\delta}{2d}\}$ and
\begin{align}\label{major2}
  \mathbb{P}_{z}\Big(\rho_{T}(Z^{N}_{a},\Phi(s))\geq\frac{\delta}{d}\Big)&\leq\mathbb{P}_{z}\Big(\rho_{T}(\Upsilon,\Phi(s))\geq\frac{\delta}{2d}\Big) +\mathbb{P}_{z}\Big(\{\|Z^{N}_{a}-\Upsilon\|_{T}\geq\frac{\delta}{2d}\}\Big) \nonumber\\
  &\leq\mathbb{P}_{z}(I_{T}(\Upsilon)\geq s)+\mathbb{P}_{z}\Big(\|Z^{N}_{a}-\Upsilon\|_{T}\geq\frac{\delta}{2d}\Big)
\end{align}
We now  bound $\mathbb{P}_{z}(I_{T}(\Upsilon)\geq s)$. For any choice $\mu\in\mathcal{A}_{d}(\Upsilon)$ we have $I_{T}(\Upsilon)\leq I_{T}(\Upsilon|\mu)$ and
\begin{equation*}
  \mathbb{P}_{z}(I_{T}(\Upsilon)\geq s)\leq\mathbb{P}_{z}(I_{T}(\Upsilon|\mu)\geq s).
\end{equation*}
Let $\mu^{j}_{t}$, $j=1,...,k$ be constant on the intervals $[\ell\epsilon,(\ell+1)\epsilon[$ and equal to
\begin{equation}\label{mu1}
  \mu^{j}_{t}=\frac{1-a}{N\epsilon}\Big[P_{j}\Big(N\int_{0}^{(\ell+1)\epsilon}\beta_{j}(Z^{N}(s)ds\Big)-P_{j}\Big(N\int_{0}^{\ell\epsilon}\beta_{j}(Z^{N}(s)ds\Big)\Big]
\end{equation}
Since $\Upsilon$ is piecewise linear, for $t\in]\ell\epsilon,(\ell+1)\epsilon[$
\begin{equation*}
  \frac{d\Upsilon_{t}}{dt}=\frac{(1-a)}{\epsilon}(Z^{N}((\ell+1)\epsilon)-Z^{N}(\ell\epsilon))=\sum_{j=1}^{k}\mu^{j}_{t}h_{j}.
\end{equation*}
Then the $\mu^{j}_{t}$ given by \eqref{mu1} belong to $\mathcal{A}_{d}(\Upsilon)$.
\par To control the change in $\Upsilon$ over the intervals of length $\epsilon$ define  $g(\epsilon)=K\sqrt{\log^{-1}(\epsilon^{-1})}$ where $K>0$ is fixed, and define a collection of events $B=\{B_{\epsilon}\}_{\epsilon>0}$
\begin{equation*}
 B_{\epsilon}=\bigcap_{\ell=0}^{T/\epsilon-1}B_{\epsilon}^{\ell}
\end{equation*}
where
\begin{equation*}
  B_{\epsilon}^{\ell}=\Big\{\sup_{\ell\epsilon\leq t_{1},t_{2}\leq (\ell+1)\epsilon}|Z^{N}_{i}(t_{1})-Z^{N}_{i}(t_{2})|\leq g(\epsilon)\quad\text{for}\quad i=1,...,d\Big\}.
\end{equation*}
We have
\begin{equation}\label{major1}
  \mathbb{P}_{z}(I_{T}(\Upsilon|\mu)>s)\leq\mathbb{P}_{z}(\{I_{T}(\Upsilon|\mu)>s\}\cap B_{\epsilon})+\mathbb{P}(B^{c}_{\epsilon})
\end{equation}
and using the Chebyshev inequality we have that for all $0<\alpha<1$
\begin{equation}\label{cheby}
  \mathbb{P}_{z}(\{I_{T}(\Upsilon|\mu)>s\}\cap B_{\epsilon})\leq\frac{\mathbb{E}_{z}(\exp\{\alpha N I_{T}(\Upsilon|\mu)\}\mathfrak{1}_{B_{\epsilon}})}{\exp\{\alpha N s\}}.
\end{equation}
We need to show that the expectation above is appropriately small for $\alpha$ arbitrarily close to 1. For this we first prove the following lemma

\begin{lemma}
For all $0<\alpha<1$, $j=1,...,k$ and $\ell=0,...,T/\epsilon-1$, there exist $Z^{-}_{j}$ and $Z^{+}_{j}$ which conditionally upon $\mathcal{F}_{\ell}$ are Poisson random variables with mean  $N\epsilon\beta^{j-}_{\ell}=N\epsilon(\beta_{j}(Z^{N}(\ell\epsilon))-C d g(\epsilon))_{+}$ and $N\epsilon\beta^{j+}_{\ell}=N\epsilon(\beta_{j}(Z^{N}(\ell\epsilon))+C d g(\epsilon))$ respectively such that if
\begin{equation*}
 \Theta_{j}^{\ell}=\exp\Big\{\alpha N \int_{\ell\epsilon}^{(\ell+1)\epsilon}f(\mu^{j}_{t}, \beta_{j}(\Upsilon_{t}))dt\Big\}\mathfrak{1}_{B_{\epsilon}^{\ell}}
\end{equation*}
and
\begin{align*}
\Xi_{j}^{\ell}&=\exp\{2\alpha NCdg(\epsilon)\epsilon\}\times\Big[\exp\Big\{\alpha N\epsilon f\Big(\frac{(1-a)Z^{-}_{j}}{\epsilon N}, \beta^{a,j}_{\ell}\Big)\Big\}\\
&+\exp\Big\{\alpha N\epsilon f\Big(\frac{(1-a)Z^{+}_{j}}{\epsilon N}, \beta^{a,j}_{\ell}\Big)\Big\}\Big]
\end{align*}
with $\beta^{a,j}_{\ell}=(\beta_{j}(\Upsilon_{\ell\epsilon})-C d g(\epsilon))_{+}$, then

\begin{equation}\label{convex1}
  \Theta_{j}^{\ell}\leq\Xi_{j}^{\ell}\quad\text{a.s}
\end{equation}
\end{lemma}

\begin{proof}
On $B^{\ell}_{\epsilon}$, with $\epsilon$ such that $g(\epsilon)<1$ and $t\in[\ell\epsilon, (\ell+1)\epsilon]$,
using the Lipshitz continuity of the rates $\beta_{j}$ we have
\begin{equation*}
  |\beta_{j}(Z^{N}(t))-\beta_{j}(Z^{N}(\ell\epsilon))|\leq C|Z^{N}(t)-Z^{N}(\ell\epsilon)|\leq C d g(\epsilon),\quad j=1,...,k
\end{equation*}
Then we have
\begin{equation*}
  \Big|N\int_{\ell\epsilon}^{(\ell+1)\epsilon}\beta_{j}(Z^{N}(t))dt-N\epsilon\beta_{j}(Z^{N}(\ell\epsilon))\Big|\leq N\epsilon C d g(\epsilon),\quad j=1,...,k.
\end{equation*}
As $\mu^{j}_{t}$, $j=1,...,k$ satisfy \eqref{mu1}, we can write
\begin{equation}\label{twoz1z2}
  \frac{(1-a)Z^{-}_{j}}{\epsilon N}\leq\mu^{j}_{\ell\epsilon}\leq\frac{(1-a)Z^{+}_{j}}{\epsilon N}\quad\text{a.s}.
\end{equation}
where for example
\begin{align*}
  Z^{-}_{j} &= P_{j}\Big(N\int_{0}^{\ell\epsilon}\beta_{j}(Z^{N}(s))ds+\epsilon N(\beta_{j}(Z^{N}(\ell\epsilon))-C d g(\epsilon))_{+}\Big)-P_{j}\Big(N\int_{0}^{\ell\epsilon}\beta_{j}(Z^{N}(s))ds\Big) \\
  Z^{+}_{j} &= P_{j}\Big(N\int_{0}^{\ell\epsilon}\beta_{j}(Z^{N}(s))ds+\epsilon N(\beta_{j}(Z^{N}(\ell\epsilon))+C d g(\epsilon))\Big)-P_{j}\Big(N\int_{0}^{\ell\epsilon}\beta_{j}(Z^{N}(s))ds\Big).
\end{align*}
Moreover it is easy to see that on $B^{\ell}_{\epsilon}$ we have
\begin{equation*}
  \max_{1\leq i\leq d}|\Upsilon_{t}^{i}-\Upsilon^{i}_{\ell\epsilon}|<(1-a)g(\epsilon)<g(\epsilon)\quad\text{for}\quad t\in[\ell\epsilon, (\ell+1)\epsilon].
\end{equation*}
And then
\begin{equation*}
  |\beta_{j}(\Upsilon_{t})-\beta_{j}(\Upsilon_{\ell\epsilon})|\leq C|\Upsilon_{t}-\Upsilon_{\ell\epsilon}|\leq C d g(\epsilon)
\end{equation*}
we deduce that
\begin{equation*}
  \beta_{j}(\Upsilon_{t})\geq(\beta_{j}(\Upsilon_{\ell\epsilon})-C d g(\epsilon))_{+}=\beta^{a,j}_{\ell}
\end{equation*}
and
\begin{equation*}
  \beta_{j}(\Upsilon_{t})\leq\beta_{j}(\Upsilon_{\ell\epsilon})+C d g(\epsilon)=\beta^{a,j}_{\ell}+2Cdg(\epsilon).
\end{equation*}
Thus
\begin{align*}
 & f(\mu^{j}_{t}, \beta_{j}(\Upsilon_{t}))=\mu_{t}^{j}\log\frac{\mu_{t}^{j}}{\beta_{j}(\Upsilon_{t})}-\mu_{t}^{j}+\beta_{j}(\Upsilon_{t})\\
&\leq\mu_{t}^{j}\log\frac{\mu_{t}^{j}}{\beta_{\ell}^{a,j}}-\mu_{t}^{j}+\beta_{\ell}^{a,j}+2Cdg(\epsilon)+\mu_{t}^{j}\log\frac{\beta_{\ell}^{a,j}}{\beta_{j}(\Upsilon_{t})}\\
&\leq f(\mu^{j}_{t}, \beta_{\ell}^{a,j})+2C d g(\epsilon)\quad\text{since}\quad\log\frac{\beta_{\ell}^{a,j}}{\beta_{j}(\Upsilon_{t})}<0.
\end{align*}
As $\mu^{j}_{t}=\mu^{j}_{\ell\epsilon}$ is constant over the interval $[\ell\epsilon, (\ell+1)\epsilon[$, we deduce that on $B^{\ell}_{\epsilon}$
\begin{equation}\label{ineqexpf}
  \exp\Big\{\alpha N \int_{\ell\epsilon}^{(\ell+1)\epsilon}f(\mu^{j}_{t}, \beta_{j}(\Upsilon_{t}))dt\Big\} \leq\exp\{\alpha N \epsilon f(\mu^{j}_{\ell\epsilon},\beta_{\ell}^{a,j})+2\alpha N C d \epsilon g(\epsilon)\}.
\end{equation}
From \eqref{twoz1z2}, \eqref{ineqexpf} and the convexity of $f(\nu,\omega)$ in $\nu$ we deduce the inequality of lemma.
\end{proof}

\par The next proposition gives us a bound for the conditionnal expectation of the right hand side of the inequality \eqref{convex1}.
\begin{proposition}
Let $a=h(\epsilon)=\Big[-\log g^{1/2}(\epsilon)\Big]^{-\frac{1}{\nu}}$. For all $0<\alpha<1$ there exist $\epsilon_{\alpha}$, $K_{\alpha}$ and $\tilde{K}$ such that for all $\epsilon\leq\epsilon_{\alpha}$ we have
\begin{align*}
  &\max_{q=-,+}\Big\{\mathbb{E}_{z}\Big(\exp\Big\{\alpha N\epsilon f\Big(\frac{(1-a)Z^{q}_{j}}{\epsilon N}, \beta^{a,j}_{\ell}\Big)\Big\}|\mathcal{F}^{N}_{\ell\epsilon}\Big)\Big\} \\
  &\leq K_{\alpha}\exp\{N\epsilon\tilde{K}(1-\alpha+2h(\epsilon)+2dg(\epsilon))\}.
\end{align*}
\end{proposition}

\begin{proof}
 Conditionally upon $\mathcal{F}^{N}_{\ell\epsilon}$, $Z_{j}^{q}$ is a Poisson variable with mean $N\epsilon\beta_{\ell}^{j,q}$. Moreover we have by the definition
\begin{equation*}
  \max\{|\beta_{\ell}^{a,j}-\beta_{\ell}^{j-}|, |\beta_{\ell}^{a,j}-\beta_{\ell}^{j+}|\}\leq \tilde{C}(a+2d g(\epsilon))
\end{equation*}
let $\tilde{\epsilon}=\epsilon/(1-a)$ and $\tilde{\alpha}=(1-a)\alpha$ then we have
\begin{align}\label{Inequal1}
 & \mathbb{E}_{z}\Big(\exp\Big\{\alpha N\epsilon f\Big(\frac{(1-a)Z^{q}_{j}}{\epsilon N}, \beta^{a,j}_{\ell}\Big)\Big\}|\mathcal{F}^{N}_{\ell\epsilon}\Big)=\mathbb{E}_{z}\Big(\exp\Big\{\alpha N\epsilon f\Big(\frac{Z^{q}_{j}}{\tilde{\epsilon} N}, \beta^{a,j}_{\ell}\Big)\Big\}|\mathcal{F}^{N}_{\ell\epsilon}\Big) \nonumber\\
  &=\sum_{m\geq0}\exp\Big\{\alpha N\epsilon f\Big(\frac{m}{\tilde{\epsilon} N}, \beta^{a,j}_{\ell}\Big)\Big\}\frac{(N\epsilon\beta_{\ell}^{j,q})^{m}\exp\{-N\epsilon\beta_{\ell}^{j,q}\}}{m!} \nonumber\\
  &=\sum_{m\geq0}\exp\Big\{\alpha N\epsilon \Big(\frac{m}{\tilde{\epsilon} N}\log\Big(\frac{m}{\tilde{\epsilon} N\beta^{a,j}_{\ell}}\Big)-\frac{m}{\tilde{\epsilon} N} +\beta^{a,j}_{\ell} \Big)\Big\}\frac{(N\epsilon\beta_{\ell}^{j,q})^{m}\exp\{-N\epsilon\beta_{\ell}^{j,q}\}}{m!} \nonumber\\
  &\leq\exp\{N\epsilon\tilde{C}(a+2d g(\epsilon))\}\sum_{m\geq0}\frac{m^{\tilde{\alpha}m}\exp\{-\tilde{\alpha}m\}}{m!} (N\epsilon\beta^{a,j}_{\ell})^{m(1-\tilde{\alpha})}\Big(\frac{\beta^{j,q}_{\ell}}{\beta^{a,j}_{\ell}}\Big)^{m} \exp\{-N\epsilon\beta^{a,j}_{\ell}(1-\alpha)\} \nonumber\\
  &\leq\exp\{N\epsilon C_{1}(a+2d g(\epsilon))\}\sum_{m\geq0}\frac{m^{\tilde{\alpha}m}\exp\{-\tilde{\alpha}m\}}{m!} (N\epsilon\beta^{a,j}_{\ell})^{m(1-\tilde{\alpha})}\Big(\frac{\beta^{j,q}_{\ell}}{\beta^{a,j}_{\ell}}\Big)^{m} \exp\{-N\epsilon\beta^{a,j}_{\ell}(1-\tilde{\alpha})\}.
\end{align}
Moreover the function $v(x)=x^{m(1-\tilde{\alpha})}\exp\{-2x(1-\tilde{\alpha})\}$ reaches its maximum at $x=m/2$ thus we have
\begin{equation*}
  x^{m(1-\tilde{\alpha})}\exp\{-2x(1-\tilde{\alpha})\}\leq\Big(\frac{m}{2}\Big)^{m(1-\tilde{\alpha})}\exp\{-m(1-\tilde{\alpha})\}\quad\forall x
\end{equation*}
In particular
\begin{equation*}
  (N\epsilon\beta^{a,j}_{\ell})^{m(1-\tilde{\alpha})} \exp\{-2N\epsilon\beta^{a,j}_{\ell}(1-\tilde{\alpha})\}\leq\Big(\frac{m}{2}\Big)^{m(1-\tilde{\alpha})}\exp\{-m(1-\tilde{\alpha})\}.
\end{equation*}
Thus
\begin{align}\label{Inequal2}
&\sum_{m\geq0}\frac{m^{\tilde{\alpha}m}\exp\{-\tilde{\alpha}m\}}{m!} (N\epsilon\beta^{a,j}_{\ell})^{m(1-\tilde{\alpha})}\Big(\frac{\beta^{j,q}_{\ell}}{\beta^{a,j}_{\ell}}\Big)^{m} \exp\{-N\epsilon\beta^{a,j}_{\ell}(1-\tilde{\alpha})\} \nonumber \\
&\leq \exp\{N\epsilon\beta^{a,j}_{\ell}(1-\tilde{\alpha})\}\sum_{m\geq0}\frac{m^{m} \exp\{-m\}}{m!}\Big(\frac{\beta^{j,q}_{\ell}/\beta^{a,j}_{\ell}}{2^{(1-\tilde{\alpha})}}\Big)^{m}
\end{align}
Moreover for $q=-$ we have 
\begin{equation*}
 \frac{\beta^{j,-}_{\ell}}{\beta^{a,j}_{\ell}}\leq\frac{\beta_{j}(Z^{N}(\ell\epsilon))}{\beta_{j}(Z^{N,a}(\ell\epsilon))-Cdg(\epsilon)}
 \end{equation*}
If $\beta_{j}(Z^{N}(\ell\epsilon))<\lambda_{1}$ we have using the assumptions \ref{assump1} \ref{assump13} and  \ref{assump1} \ref{assump14}
\begin{align*}
 \frac{\beta^{j,-}_{\ell}}{\beta^{a,j}_{\ell}}&\leq\frac{\beta_{j}(Z^{N,a}(\ell\epsilon))}{\beta_{j}(Z^{N,a}(\ell\epsilon))-Cdg(\epsilon)}\leq\frac{C_{a}}{C_{a}-Cdg(\epsilon)}\\
 &\leq\frac{1}{1-\frac{Cdg(\epsilon)}{g^{1/2}(\epsilon)}}\rightarrow1\quad\text{as}\quad\epsilon\rightarrow0.
 \end{align*}
If $\beta_{j}(Z^{N}(\ell\epsilon))\geq\lambda_{1}$, we have
\begin{align*}
 \frac{\beta^{j,-}_{\ell}}{\beta^{a,j}_{\ell}}&\leq\frac{\beta_{j}(Z^{N}(\ell\epsilon))}{\beta_{j}(Z^{N}(\ell\epsilon))-C\bar{C}a-Cdg(\epsilon)}\leq\frac{\lambda_{1}}{\lambda_{1}-C\bar{C}h(\epsilon)-Cdg(\epsilon)}\\
 &\rightarrow1\quad\text{as}\quad\epsilon\rightarrow0.
 \end{align*} 
And for $q=+$ We have
\begin{equation*}
 \frac{\beta^{j,+}_{\ell}}{\beta^{a,j}_{\ell}}\leq\frac{\beta_{j}(Z^{N}(\ell\epsilon))+Cdg(\epsilon)}{\beta_{j}(Z^{N,a}(\ell\epsilon))-Cdg(\epsilon)}
 \end{equation*}
If $\beta_{j}(Z^{N}(p\epsilon))<\lambda_{1}$ we have using the assumptions \ref{assump1} \ref{assump13}  and \ref{assump1} \ref{assump14}
\begin{align*}
 \frac{\beta^{j,+}_{\ell}}{\beta^{a,j}_{\ell}}&\leq\frac{\beta_{j}(Z^{N,a}(\ell\epsilon))+Cdg(\epsilon)}{\beta_{j}(Z^{N,a}(\ell\epsilon))-Cdg(\epsilon)} \\
 &\leq\frac{C_{a}+Cdg(\epsilon)}{C_{a}-Cdg(\epsilon)}\leq\frac{1+\frac{Cdg(\epsilon)}{g^{1/2}(\epsilon)}}{1-\frac{Cdg(\epsilon)}{g^{1/2}(\epsilon)}}\rightarrow1\quad{as}\quad\epsilon\rightarrow0.
 \end{align*}
If $\beta_{j}(Z^{N}(\ell\epsilon))\geq\lambda_{1}$, we have
\begin{align*}
 \frac{\beta^{j,+}_{\ell}}{\beta^{a,j}_{\ell}}&\leq\frac{\beta_{j}(Z^{N}(\ell\epsilon))+Cdg(\epsilon)}{\beta_{j}(Z^{N}(\ell\epsilon))-C\bar{C}h(\epsilon)-Cdg(\epsilon)}\\
 &\leq\frac{\lambda_{1}+Cdg(\epsilon)}{\lambda_{1}-C\bar{C}h(\epsilon)-Cdg(\epsilon)}\rightarrow1\quad\text{as}\quad\epsilon\rightarrow0.
 \end{align*}
Then there exists $\epsilon_{\alpha}$ such that $\frac{\beta_{\ell}^{j,q}}{\beta_{\ell}^{a,j}}<2^{(1-\alpha)/2}<2^{(1-\tilde{\alpha})/2}$ for all $\epsilon<\epsilon_{\alpha}$.
\\ Thus for $\epsilon$ small enough we have
\begin{align}\label{Inequal3}
&\exp\{N\epsilon\beta_{\ell}^{a,j}(1-\tilde{\alpha})\}\sum_{m\geq0}\frac{m^{m}e^{-m}}{m!}\Big(\frac{\beta_{\ell}^{j,q}/\beta_{\ell}^{a,j}}{2^{(1-\tilde{\alpha})}}\Big)^{m} \nonumber\\ 
&\leq e^{N\epsilon\theta(1-\tilde{\alpha})}\sum_{m\geq0}\frac{m^{m}e^{-m}}{m!}\Big(\frac{1}{2^{(1-\alpha)/2}}\Big)^{m}\\
&= e^{N\epsilon\theta(1-\tilde{\alpha})} K_{\alpha}.\nonumber
\end{align}
Since the series above converges. We deduce from \eqref{Inequal1}, \eqref{Inequal2} and \eqref{Inequal3} that
\begin{align*}
 \mathbb{E}_{z}\Big(\exp\Big\{\alpha N\epsilon f\Big(\frac{(1-a)Z^{q}_{j}}{\epsilon N}, \beta^{a,j}_{\ell}\Big)\Big\}|\mathcal{F}^{N}_{\ell\epsilon}\Big)&\leq K_{\alpha}\exp\{N\epsilon C_{2}(1-\alpha+a)\}\exp\{N\epsilon\tilde{C}(a+cdg(\epsilon))\}\\
 &\leq K_{\alpha}\exp\{N\epsilon\tilde{K}(1-\alpha+2h(\epsilon)+2dg(\epsilon))\}.
 \end{align*} 
\end{proof}

Thus, we have
\begin{equation*}
 \mathbb{E}_{z}(\Theta_{j}^{\ell}|\mathcal{F}^{N}_{\ell\epsilon})\leq\mathbb{E}_{z}(\Xi_{j}^{\ell}|\mathcal{F}{N}_{\ell\epsilon})\leq2K_{\alpha}\exp\{N\epsilon\tilde{K}_{1}(1-\alpha+2h(\epsilon)+4dg(\epsilon))\}.
\end{equation*}

The next lemma gives us a upper bound for the quantity $\mathbb{E}_{z}\Big(\exp\{\alpha NI_{T}(\Upsilon|\mu)\}\mathbf{1}_{B_{\epsilon}}\Big)$.
\begin{lemma}
We have the following inequality
\begin{equation}\label{ineqsup1}
\mathbb{E}_{z}\Big(\exp\{\alpha N I_{T}(\Upsilon|\mu)\}\mathbf{1}_{B_{\epsilon}}\Big)\leq (2K_{\alpha})^{\frac{kT}{\epsilon}}\exp\{kNT\tilde{K}_{1}(1-\alpha+h(\epsilon)+4dg(\epsilon))\}
\end{equation}
\end{lemma}
\begin{proof}
We know that $\Xi^{\ell}_{j}$, $j=1,...,k$ are conditionnally independent given $\mathcal{F}^{N}_{\ell\epsilon}$. Taking iterative conditional expectations with 
respect to  $\mathcal{F}^{N}_{(\frac{T}{\epsilon}-1)\epsilon}$, $\mathcal{F}^{N}_{(\frac{T}{\epsilon}-2)\epsilon}$,...,$\mathcal{F}^{N}_{\epsilon}$, we get that for all $0<\alpha<1$ and $\epsilon<\epsilon_{\alpha}$
\begin{align*}
\mathbb{E}_{z}\Big(\exp\{\alpha NI_{T}(\Upsilon|\mu)\}\mathbf{1}_{B_{\epsilon}}\Big)&=\mathbb{E}_{z}\Big(\prod_{\ell=0}^{\frac{T}{\epsilon}-1}\exp\Big\{\alpha N\int_{\ell\epsilon}^{(\ell+1)\epsilon}\sum_{j}f(\mu^{j}_{t},\beta_{j}(\Upsilon_{t}))dt\Big\}\mathbf{1}_{B^{\ell}_{\epsilon}}\Big)\\
&=\mathbb{E}_{z}\Big(\mathbb{E}_{z}\Big(\prod_{\ell=0}^{\frac{T}{\epsilon}-1}\prod_{j=1}^{k}\Theta_{j}^{\ell}|\mathcal{F}^{N}_{(\frac{T}{\epsilon}-1)\epsilon}\Big)\Big)\leq\mathbb{E}^{N}\Big(\mathbb{E}_{z}\Big(\prod_{\ell=0}^{\frac{T}{\epsilon}-1}\prod_{j=1}^{k}\Xi_{j}^{\ell}|\mathcal{F}^{N}_{(\frac{T}{\epsilon}-1)\epsilon}\Big)\Big)\\
&\leq\mathbb{E}_{z}\Big(\prod_{\ell=0}^{\frac{T}{\epsilon}-2}\prod_{j=1}^{k}\Xi_{j}^{\ell}\mathbb{E}^{N}\Big(\prod_{j=1}^{k}\Xi_{j}^{\frac{T}{\epsilon}-1}|\mathcal{F}^{N}_{(\frac{T}{\epsilon}-1)\epsilon}\Big)\Big)\\
&\leq\prod_{p=0}^{\frac{T}{\epsilon}-1}(2K_{\alpha})^{k}\exp\{kN\epsilon\tilde{\tilde{C}}(1-\alpha+h(\epsilon)+4dg(\epsilon))\}\\
&=(2K_{\alpha})^{\frac{kT}{\epsilon}}\exp\{kNT\tilde{K}_{1}(1-\alpha+h(\epsilon)+4dg(\epsilon))\}.
\end{align*}
\end{proof}
In the next Lemma, we give a upper bound for $\mathbb{P}_{z}(B_{\epsilon}^{c})$.
\begin{lemma}
For any $s>0$ there exists $\epsilon_{0}>0$, $N_{0}\in\mathbb{N}$ and $K>0$ such that
\begin{equation}\label{ineqsup2} 
\mathbb{P}_{z}(B_{\epsilon}^{c})<\frac{dkT}{\epsilon}\exp\{-sN\} 
\end{equation}
for all $\epsilon<\epsilon_{0}$ and $N>N_{0}$ where $g(\epsilon)=K\sqrt{\log^{-1}(\epsilon^{-1})}$. 
\end{lemma}
\begin{proof}
 For all $j=1,...,k$ and $\ell=1,...,T/\epsilon$ we can write
\begin{equation*}
 \int_{0}^{(\ell+1)\epsilon}\beta_{j}(Z^{N}_{s})ds<\int_{0}^{\ell\epsilon}\beta_{j}(Z^{N}_{s})ds+\sigma\epsilon.
\end{equation*}
Moreover, we have
\begin{equation*}
 B_{\epsilon}^{c}=\bigcup_{i=1,...,d}\bigcup_{\ell=1,...,T/\epsilon}\Big\{\sup_{(\ell-1)\epsilon\leq t_{1},t_{2}\leq \ell\epsilon}|Z^{N}_{i}(t_{1})-Z^{N}_{i}(t_{2})|>g(\epsilon)\Big\}.
\end{equation*}
Thus
\begin{equation*}
 \mathbb{P}_{z}(B_{\epsilon}^{c})\leq\sum_{i=1}^{d}\sum_{\ell=1}^{T/\epsilon}\mathbb{P}\Big\{\sup_{(\ell-1)\epsilon\leq t_{1},t_{2}\leq \ell\epsilon}|Z^{N}_{i}(t_{1})-Z^{N}_{i}(t_{2})|>g(\epsilon)\Big\}.
\end{equation*}
Using \eqref{EqPoisson1} and denoting by $Z^{N}_{i}(.)$ the $i^{th}$ coordinate of $Z^{N}(.)$ we have, since $|h_{j}^{i}|\leq1$ for all $1\leq j\leq k$, $1\leq i\leq d$,
\begin{align*}
& \sup_{(\ell-1)\epsilon\leq t_{1},t_{2}\leq \ell\epsilon}|Z^{N}_{i}(t_{1})-Z^{N}_{i}(t_{2})| \\
 &=\sup_{(\ell-1)\epsilon\leq t_{1},t_{2}\leq \ell\epsilon}\Big|\sum_{j}\frac{h^{i}_{j}}{N}\Big[P_{j}\Big(N\int_{0}^{t_{1}}\beta_{j}(Z^{N}(s))ds\Big)-P_{j}\Big(N\int_{0}^{t_{2}}\beta_{j}(Z^{N}(s))ds\Big)\Big]\Big|\\
 &\leq\frac{1}{N}\sum_{j}\Big[P_{j}\Big(N\int_{0}^{\ell\epsilon}\beta_{j}(Z^{N}(s))ds\Big)-P_{j}\Big(N\int_{0}^{(\ell-1)\epsilon}\beta_{j}(Z^{N}(s))ds\Big)\Big]\\
 &\leq\frac{1}{N}\sum_{j}\Big[P_{j}\Big(N\int_{0}^{(\ell-1)\epsilon}\beta_{j}(Z^{N}(s))ds+N\sigma\epsilon\Big)-P_{j}\Big(N\int_{0}^{(\ell-1)\epsilon}\beta_{j}(Z^{N}(s))ds\Big)\Big]\\
 &\leq\frac{1}{N}\sum_{j}Z_{j}.
\end{align*}
Where $Z_{j}$  $j=1,...,k$ are independent Poisson random variables with mean $N\sigma\epsilon$.
Then 
\begin{equation*}
 \mathbb{P}_{z}\Big\{\sup_{(\ell-1)\epsilon\leq t_{1},t_{2}\leq \ell\epsilon}|Z^{N}_{i}(t_{1})-Z^{N}_{i}(t_{2})|>g(\epsilon)\Big\}\leq k\mathbb{P}_{z}(N^{-1}Z_{1}>g(\epsilon)/k)
\end{equation*}
And it follows from  lemma \ref{le17} that there exist a constants $K>0$, $\epsilon_{0}>0$ and $N_{0}\in\mathbb{N}$ such that
\begin{equation*}
 \mathbb{P}_{z}\Big\{\sup_{(\ell-1)\epsilon\leq t_{1},t_{2}\leq \ell\epsilon}|Z^{N}_{i}(t_{1})-Z^{N}_{i}(t_{2})|>g(\epsilon)\Big\}\leq k\exp\{-sN\}
\end{equation*}
For all $\epsilon<\epsilon_{0}$ and $N>N_{0}$. And then
\begin{equation*}
\mathbb{P}_{z}(B_{\epsilon}^{c})<\frac{dkT}{\epsilon}\exp\{-sN\}. 
\end{equation*}
\end{proof}

\par Now, we find a upper bound for $\mathbb{P}_{z}(\|Z^{N,a}-\Upsilon\|_{T}\geq\delta/2d)$ in \eqref{major2}.
\begin{lemma}
 For all $\delta, s>0$ there exist $\epsilon_{\alpha}>0$, $N_{0}\in\mathbb{N}$ such that
\begin{equation}\label{ineqsup3}
\mathbb{P}_{z}(\|Z^{N}_{a}-\Upsilon\|_{T}>\delta/2d)<\frac{dkT}{\epsilon}\exp\{-sN\}, 
\end{equation}
for all $\epsilon<\epsilon_{\alpha}$ and $N>N_{0}$.
\end{lemma}
\begin{proof}
Using \eqref{EqPoisson1} we write for all $t\in[\ell\epsilon,(\ell+1)\epsilon[$
\begin{align*}\label{major3}
 |Z^{N}_{a,i}(t)-\Upsilon^{i}_{t}|&\leq\sum_{j}\frac{1}{N}\Big[P_{j}\Big(N\int_{0}^{(\ell+1)\epsilon}\beta_{j}(Z^{N}(s))ds\Big)-P_{j}\Big(N\int_{0}^{\ell\epsilon}\beta_{j}(Z^{N}(s))ds\Big)\Big]\\
 &\leq\frac{1}{N}\sum_{j}\Big[P_{j}\Big(N\int_{0}^{\ell\epsilon}\beta_{j}(Z^{N}(s))ds+N\sigma\epsilon\Big)-P_{j}\Big(N\int_{0}^{\ell\epsilon}\beta_{j}(Z^{N}(s))ds\Big)\Big]\\
 &\leq\frac{1}{N}\sum_{j}Z_{j}
\end{align*}
where the $Z_{j}$ are as in the proof of the last lemma. Let $\epsilon_{1}$ be the maximal $\epsilon$ such that $\delta/2kd^{2}>g(\epsilon)$. Then we have from lemma \ref{le17} that
for all $\epsilon<\epsilon_{\alpha}=\min\{\epsilon_{0},\epsilon_{1}\}$ and $N>N_{0}$
\begin{align*}
 \mathbb{P}_{z}(\|Z^{N}_{a}-\Upsilon\|_{T}>\delta/2d)&\leq\mathbb{P}_{z}\Big(\bigcup_{i=1}^{d}\{|Z^{N}_{a,i}(t)-\Upsilon^{i}_{t}|>\frac{\delta}{2d^2}\}\quad\text{for some}\quad t\in[0,T]\Big)\\
 & \leq \frac{T}{\epsilon}\max_{0\leq \ell\leq T/\epsilon-1}\mathbb{P}_{z}\Big(\bigcup_{i=1}^{d}\{|Z^{N}_{a,i}(t)-\Upsilon^{i}_{t}|>\frac{\delta}{2d^2}\}\quad\text{for some}\quad t\in[\ell\epsilon,(\ell+1)\epsilon[\Big)\\
 &\leq \frac{dkT}{\epsilon}\mathbb{P}_{z}(Z_{1}/N>\delta/2kd^2)\leq \frac{dkT}{\epsilon} \exp\{-sN\}.
\end{align*}
\end{proof}

The end of the proof of the lemma \ref{upperb} can be done by using \eqref{ineqsup1}, \eqref{ineqsup2}, \eqref{ineqsup3}.
 We have thus for all $\delta>0$, $0<\alpha<1$, $\epsilon<\min\{\epsilon_{0},\epsilon_{\frac{\delta}{2d}},\epsilon_{1}\}$ and $a=h(\epsilon)=\Big[-\log g^{1/2}(\epsilon)\Big]^{-\frac{1}{\nu}}$,
\begin{align*}
 \mathbb{P}_{z}(\rho_{T}(Z^{N},\Phi(s))\geq\delta)&\leq\mathbb{P}_{z}(I_{T}(\Upsilon|\mu)\geq s)+\mathbb{P}(\|Z^{N}_{a}-\Upsilon\|_{T}\geq\delta/d)\\
 &\leq\frac{\mathbb{E}_{z}(\exp\{\alpha N I_{T}(\Upsilon|\mu)\}\mathfrak{1}_{B_{\epsilon}})}{\exp\{\alpha N s\}}+\mathbb{P}_{z}(B_{\epsilon}^{c})+\mathbb{P}_{z}(\|Z^{N}_{a}-\Upsilon\|_{T}\geq\delta/2d)\\
 &\leq (2K_{\alpha})^{\frac{kT}{\epsilon}}\exp\{kNT\tilde{K}_{1}(1-\alpha+h(\epsilon)+4dg(\epsilon))\}\\
 &\times\exp\{-\alpha N s\}+\frac{2dTk}{\epsilon}\exp\{-sN\}.
\end{align*}
Here, we take $1-\alpha$ and $\epsilon$ small enough to ensure that $kT\tilde{K}_{1}(1-\alpha+h(\epsilon)+4dg(\epsilon))<\eta/4$
and $(1-\alpha)s<\eta/4$. We also take $N$ large enough so that $kT\log(2K_{\alpha})/N\epsilon<\eta/4$ and $\log(2dkT/\epsilon)/N<\eta/4$ and we have
\begin{align*}
 \mathbb{P}_{z}(\rho_{T}(Z^{N},\Phi(s))\geq\delta)&\leq\exp\{-N(s-3\eta/4)\}+\frac{2dT}{\epsilon}\exp\{-sN\}\\
 &\leq \frac{dkT}{\epsilon}.\exp\{-N(s-3\eta/4)\}\leq\exp\{-N(s-\eta)\}.
\end{align*}
Thus
\begin{equation*}
\mathbb{P}_{z}^{N}(F_{\delta}^{s})\leq\exp\{-N(s-\eta)\}.
\end{equation*}
\end{proof}

\par We conclude the proof of the upper bound by the following theorem
\begin{theorem}\label{upperbound1}
For any closed subset $F$ of $D_{T,A}$ and $z\in A$
\begin{equation}\label{upper}
\limsup_{N\to\infty}\frac{1}{N}\log\mathbb{P}^{N}_{z}(F)\leq -\inf_{\phi\in F,\phi_{0}=z}I_{T}(\phi).
\end{equation}
\end{theorem}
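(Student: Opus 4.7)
The plan is to deduce Theorem \ref{upperbound1} from Lemma \ref{upperb} by a standard argument that combines the semicontinuity of the infimum with respect to the initial condition (Lemma \ref{semiconz}) with the goodness of the rate function (compactness of the level sets $\Phi(s)$). Let $F$ be a closed subset of $D_{T,A}$ and set $I_F := \inf_{\phi\in F,\,\phi_0=z} I_T(\phi)$. If $I_F=0$ the bound is trivial, so I fix $0<s<I_F$ and $\eta>0$, with the goal of showing $\limsup_{N\to\infty} N^{-1}\log \mathbb{P}^N_z(F)\leq -s+\eta$.

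First, by Lemma \ref{semiconz} I choose $\tilde\delta>0$ small enough so that
$$\inf_{y\in A,\,|y-z|\leq \tilde\delta}\;\inf_{\phi\in F,\,\phi_0=y} I_T(\phi) > s.$$
This means exactly that the closed set $F_z:=F\cap\{\phi\in D_{T,A}:|\phi_0-z|\leq\tilde\delta\}$ is disjoint from the level set $\Phi(s)$. Since $I_T$ is a good rate function, $\Phi(s)$ is compact in the uniform topology (its elements are absolutely continuous and form an equicontinuous family), and therefore the uniform distance from the closed set $F_z$ to the compact set $\Phi(s)$ is strictly positive: there exists $\delta>0$ with $\rho_T(F_z,\Phi(s))\geq\delta$, that is, $F_z\subset F^s_\delta$ with $F^s_\delta$ as in Lemma \ref{upperb}.

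Second, the deterministic initial condition $Z^N(0)=[Nz]/N$ satisfies $|Z^N(0)-z|\leq\sqrt{d}/N$, so there exists $N_1$ such that, for every $N\geq N_1$, every path of $Z^N$ starts at distance at most $\tilde\delta$ from $z$, giving the pathwise inclusion $\{Z^N\in F\}\subset\{Z^N\in F_z\}\subset\{Z^N\in F^s_\delta\}$. Applying Lemma \ref{upperb} with the chosen $(s,\delta,\eta)$ then yields, for all $N$ sufficiently large,
$$\mathbb{P}^N_z(Z^N\in F)\;\leq\;\mathbb{P}^N_z(Z^N\in F^s_\delta)\;\leq\;\exp\{-N(s-\eta)\}.$$
Taking $\limsup$ in $N$, then letting $\eta\downarrow 0$ and $s\uparrow I_F$, I obtain \eqref{upper}. (If $I_F=+\infty$, the same argument is valid for arbitrarily large $s$, so the $\limsup$ is $-\infty$.)

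The only subtle point is the upgrade from disjointness to positive separation in the sup-norm $\rho_T$; it is legitimate precisely because $\Phi(s)$ is uniformly compact, a consequence of the goodness of $I_T$ that was established via equicontinuity of the level sets. Beyond this, every step is a routine compactness/covering argument, so I expect no real obstacle.
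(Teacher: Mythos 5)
Your proof is correct and follows essentially the same route as the paper: reduce to Lemma \ref{upperb} by separating the relevant closed subset of $F$ from the compact level set $\Phi(s)$ in sup-norm and invoking the goodness of $I_T$. The only difference is that you additionally invoke Lemma \ref{semiconz} and the bound $|Z^N(0)-z|\le \sqrt{d}/N$ to justify the inclusion $\{Z^N\in F\}\subset\{Z^N\in F^s_\delta\}$, a point the paper passes over silently by separating $F_z=\{\phi\in F:\phi_0=z\}$ from $\Phi(s)$ even though $Z^N$ starts at $[Nz]/N$ rather than at $z$; your version is the more careful one.
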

\begin{proof}
Show that if the inequality  \eqref{upper} is true then the inequality  \eqref{loexpupper} is also true. To this end, we remark that for all $\delta$ and $s>0$, $F_{\delta}^{s}$ defined by \eqref{hset} is closed and $I_{T}(\phi)>s$ for all $\phi\in F_{\delta}^{s}$. Therefore $\inf\{I_{T}(\phi): \phi\in F_{\delta}^{s}, \phi_{0}=z\}\geq\inf\{I_{T}(\phi): \phi\in F_{\delta}^{s}\}\geq s$. We deduce from inequality \eqref{upper} that
\begin{equation*}
\limsup_{N\to\infty}\frac{1}{N}\log\mathbb{P}_{z}^{N}(F_{\delta}^{s})\leq -s.
\end{equation*}
Then for all $\eta>0$ there exists $N_{0}\in\mathbb{N}$ such that for all $N>N_{0}$ we have
\begin{equation*}
\mathbb{P}_{z}^{N}(F_{\delta}^{s})\leq \exp\{-N(s-\eta)\}.
\end{equation*}

We now assume that the inequality \eqref{loexpupper} is satisfied and we need to prove that this implies  \eqref{upper}. To this end let $F\in D_{T,A}$ a closed set,  choose $\eta>0$ and let $$s=\inf_{\phi\in F, \phi_{0}=z}I_{T}(\phi)-\eta/2.$$ The closed set $F_{z}=\{\phi\in F: \phi_{0}=z\}$ does not intersect the compact set $\Phi(s)$. Therefore $$\delta=\inf_{\phi\in F_{z}}\inf_{\psi\in\Phi(s)}\|\phi-\psi\|_{T}>0.$$ We use the inequality  \eqref{loexpupper}  to have for any $\delta, \eta$ and $s>0$ there exists $N_{0}\in\mathbb{N}$ such that for all $N>N_{0}$,
\begin{align*}
\mathbb{P}_{z}^{N}(F)&\leq\mathbb{P}_{z}^{N}(F_{\delta}^{s})\\
&\leq \exp\{-N(s-\eta/2)\} \\
&\leq \exp\Big\{-N\Big(\inf_{\phi\in F,\phi_{0}=z}I_{T}(\phi)-\eta\Big)\Big\},
\end{align*}
then 
\begin{equation*}
\limsup_{N\to\infty}\frac{1}{N}\log\mathbb{P}_{z}^{N}(F)\leq -\inf_{\phi\in F,\phi_{0}=z}I_{T}(\phi).
\end{equation*}
\end{proof}

\begin{corollary}
For any open subset $F$ of $D_{T,A}$ and any compact subset $K$ of $A$,
  \begin{equation*}
\limsup_{N\to\infty}\frac{1}{N}\log\sup_{z\in K}\mathbb{P}_{z}(Z^{N}\in F)\leq-\inf_{z\in K}\inf_{\phi\in F, \phi_{0}=z}I_{T}(\phi).
\end{equation*}
\end{corollary}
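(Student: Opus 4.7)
The plan is to carry over the argument of the previous corollary (the lower bound version) to the upper bound setting, replacing Theorem \ref{lowerbound1} by Lemma \ref{upperb}. First, I note that although $F$ is written as open in the statement, the upper bound only makes sense for closed $F$ (as in Theorem \ref{upperbound1}), and I shall proceed accordingly.

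I would set $I := \inf_{z\in K}\inf_{\phi\in F,\phi_0=z}I_T(\phi)$, dismiss the trivial case $I = 0$, fix $\eta > 0$, and put $s := I - \eta/2$. For each $z \in K$ the fiber $\{\phi \in F : \phi_0 = z\}$ is then disjoint from the level set $\Phi(s)$, since every element of the fiber has rate $\geq I > s$.

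The crucial local step is the following separation: for each $z \in K$ there exist radii $\epsilon_z, \delta_z > 0$ such that whenever $y \in A$ with $|y-z| < \epsilon_z$ and $\phi \in F$ with $\phi_0 = y$, one has $\rho_T(\phi, \Phi(s)) \geq \delta_z$. I would prove this by contradiction: a failure would produce $y_n \to z$, paths $\phi_n \in F$ with $\phi_{n,0}=y_n$, and $\psi_n \in \Phi(s)$ with $\|\phi_n-\psi_n\|_T \to 0$; extracting a convergent subsequence $\psi_{n_k} \to \psi$ by the compactness of $\Phi(s)$ yields $\phi_{n_k} \to \psi$, hence $\psi_0 = z$ and, using the closedness of $F$, $\psi \in F$; but then $I_T(\psi) \leq s < I \leq \inf_{\phi \in F, \phi_0=z} I_T(\phi)$, a contradiction. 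This is essentially the same lower-semicontinuity phenomenon underlying Lemma \ref{semiconz}.

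With the separation in hand, observe that if $y \in B(z,\epsilon_z) \cap A$ and $N$ is large enough that $[Ny]/N$ also lies in $B(z,\epsilon_z)$, then under $\mathbb{P}_y$ the process $Z^N$ starts inside that neighborhood, so $\{Z^N \in F\} \subset \{Z^N \in F_{\delta_z}^s\}$. Invoking Lemma \ref{upperb}, whose proof produces an $N_0$ independent of the starting point (it relies only on the uniform constants $\sigma, C, C_a, \lambda_1, \lambda_2$ and on Poissonian large-deviation estimates insensitive to $z$), yields
\[
\mathbb{P}_y(Z^N \in F) \leq \mathbb{P}_y(Z^N \in F_{\delta_z}^s) \leq \exp\{-N(s - \eta/2)\} = \exp\{-N(I - \eta)\}
\]
for all $N$ past a threshold depending only on $z$. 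A finite subcover of $K$ by balls $B(z_i,\epsilon_{z_i})$, combined with the corresponding maximum of thresholds, gives a global $N_0$ and hence $\sup_{y \in K}\mathbb{P}_y(Z^N \in F) \leq \exp\{-N(I-\eta)\}$; taking $\frac{1}{N}\log$, then $\limsup_{N\to\infty}$, and finally letting $\eta \downarrow 0$ completes the argument. The main obstacle is the local separation step, where one must combine the compactness of the level sets (good rate function) with the closedness of $F$ to preclude accumulation of paths starting near $z$ onto $\Phi(s)$.
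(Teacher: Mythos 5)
Your argument is correct and follows the same overall skeleton as the paper's proof (fix $\eta$, establish a local estimate on a small ball around each $z\in K$, extract a finite subcover by compactness, let $\eta\downarrow 0$ --- the scheme of Corollary 5.6.15 in Dembo--Zeitouni); you are also right that ``open'' in the statement is a slip for ``closed''. The difference lies in how the local estimate is obtained. The paper invokes Lemma \ref{semiconz} to get $\inf_{\phi\in F,\phi_0=y}I_T(\phi)\geq\inf_{\phi\in F,\phi_0=z}I_T(\phi)-\eta$ for $y$ near $z$, and then applies the pointwise bound \eqref{upper} of Theorem \ref{upperbound1} at each such $y$; as written, that step requires the threshold $N$ implicit in the $\limsup$ of \eqref{upper} to be uniform over the ball, which the paper leaves tacit. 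You instead go one level deeper and bypass both Lemma \ref{semiconz} and Theorem \ref{upperbound1}: you prove directly, from the compactness of the level set $\Phi(s)$ and the closedness of $F$, that every path of $F$ starting within $\epsilon_z$ of $z$ stays at distance at least $\delta_z$ from $\Phi(s)$, so that $\{Z^N\in F\}\subset\{Z^N\in F_{\delta_z}^{s}\}$, and then apply Lemma \ref{upperb}, whose constants and threshold are indeed uniform in the starting point. Your separation step is precisely the content of Lemma \ref{semiconz} combined with the separation argument already used to deduce Theorem \ref{upperbound1} from Lemma \ref{upperb}, so nothing genuinely new is required; but your version makes the uniformity in $N$ explicit, which is the cleaner way to write this corollary. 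One minor point: you should also dispose of the case $I=\infty$, e.g. by replacing $s=I-\eta/2$ with $s=\min\{I-\eta/2,\eta^{-1}\}$, as the paper does with $I_K^{\eta}$.
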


\begin{proof}
The arguments are the same as in the proof of Corollary 5.6.15 in \cite{Dembo2009}. Let
\begin{equation*}
I_{K}:=\inf_{z\in K}\inf_{\phi\in F, \phi_{0}=z}I_{T}(\phi).
\end{equation*}
For  $\eta>0$ fix, let $I_{K}^{\eta}:=\min\{I_{K}-\eta, \eta^{-1}\}$. Then from Lemma \ref{semiconz} it follows that for any $z\in K$, there exists a $N_{z}$ such that for all $N>N_{z}$ and $y\in B(z,\frac{1}{N_{z}})$,
\begin{equation*}
\inf_{\phi\in F, \phi_{0}=y}I_{T}(\phi)\geq\inf_{\phi\in F, \phi_{0}=z}I_{T}(\phi)-\eta\geq I_{K}^{\eta}.
\end{equation*}
Therefore we have from \eqref{upper} that 
\begin{equation*}
\frac{1}{N}\log\mathbb{P}_{y}(Z^{N}\in F)\leq-\inf_{\phi\in F, \phi_{0}=y}I_{T}(\phi)\leq-I_{K}^{\eta}.
\end{equation*}
And then
\begin{equation*}
\frac{1}{N}\log\sup_{y\in B(z,\frac{1}{N_{z}})}\mathbb{P}_{y}(Z^{N}\in F)\leq-I_{K}^{\eta}.
\end{equation*}
As $K$ is compact, there exits a finite sequence $(z_{i})_{1\leq i\leq m}\subset K$  such that  $K\subset\bigcup_{i=1}^{m}B(z_{i},\frac{1}{N_{z_{i}}})$. Then for $N\geq \max_{1\leq i\leq m}N_{z_{i}}$,
\begin{equation*}
\frac{1}{N}\log\sup_{y\in K}\mathbb{P}_{y}(Z^{N}\in F)\leq-I_{K}^{\eta}.
\end{equation*}
It first remains to take $\limsup$ as $N\to\infty$ and then let $\eta$ tend to $0$ to have result.
\end{proof}

\section{Time of exit from a domain}
Let $O$ the domain of attraction of a stable point of the dynamical system \eqref{ODE} and $\widetilde{\partial O}$ be the part of boundary of $O$ that the stochastic system \eqref{EqPoisson1} can cross. We now give an approximate value for the exit time $\tau^{N}_{O}$ from $O$ for large $N$ as well as the exponential asymptotic of its mean $\mathbb{E}_{z}(\tau^{N}_{O})$. To this end, for $z, y \in \bar{O}$, we define the following functionals
\begin{align*}
V_{\bar{O}}(z,y,T)&:= \inf_{\phi\in D_{T,\bar{O}}, \phi_{0}=z, \phi_{T}=y} I_{T}(\phi) \\
V_{\bar{O}}(z,y)&:= \inf_{T>0} V_{\bar{O}}(z,y,T)  \\
 V_{\widetilde{\partial O}}&:= \inf_{y \in \widetilde{\partial O}} V_{\bar{O}}(z^{*},y).
\end{align*}
The following theorem is a consequence of the large deviation principle established above, the law of large numbers and some technical arguments. The proof could be found in Section 7 of \cite{Kratz2014}.

\begin{theorem}
Given $\eta>0$, for all $z\in O$,
\begin{equation*}
\lim_{N\to\infty}\mathbb{P}_{z}\big(\exp\{N(V_{\widetilde{\partial O}}-\eta)\}<\tau^{N}_{O}<\exp\{N(V_{\widetilde{\partial O}}+\eta)\}\big)=1.
\end{equation*}
Moreover, for all $\eta>0$, $z\in O$ and $N$ large enough,
\begin{equation*}
\exp\{N(V_{\widetilde{\partial O}}-\eta)\}\leq\mathbb{E}_{z}(\tau^{N}_{O})\leq\exp\{N(V_{\widetilde{\partial O}}+\eta)\}.
\end{equation*}
\end{theorem}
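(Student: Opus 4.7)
The plan is to follow the classical Freidlin--Wentzell strategy, combining the already established large deviation bounds with the strong Markov property of $Z^{N}$ and a cycle decomposition. The goal is to estimate, for fixed horizon $T$, both the probability of exiting from $O$ in time $T$ and the probability of not exiting by repeated attempts, then patch these together.

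First I would establish two per-cycle estimates. Fix $\rho>0$ small so that $B(z^{*},\rho)\subset O$ and denote by $\widetilde{V}$ the level $V_{\widetilde{\partial O}}$.
Upper bound on one-cycle exit probability: using Theorem~\ref{upperbound1} (the upper bound of the LDP) applied to the closed set of paths starting at $y\in\partial B(z^{*},\rho)$ and exiting $O$ within time $T$, together with Lemma~\ref{semiconz} for uniformity in the starting point, I would obtain, for $T$ large enough,
\begin{equation*}
\sup_{y\in B(z^{*},\rho)}\mathbb{P}_{y}(\tau^{N}_{O}\leq T)\leq \exp\{-N(\widetilde{V}-\eta/4)\},
\end{equation*}
because any path exiting $O$ from near $z^{*}$ in time $T$ has rate at least $\widetilde{V}-\eta/8$ up to a correction for the $\rho$--neighborhood.
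Lower bound on one-cycle exit probability: by definition of $\widetilde{V}$, there exist $T_{0}>0$, $y^{*}\in\widetilde{\partial O}$ and $\phi^{*}\in\mathcal{AC}_{T_{0},\bar{O}}$ with $\phi^{*}_{0}=z^{*}$, $\phi^{*}_{T_{0}}=y^{*}$ and $I_{T_{0}}(\phi^{*})<\widetilde{V}+\eta/8$. A small perturbation pushes this path slightly outside $\overline{O}$; applying Theorem~\ref{lowerbound1} (the LDP lower bound, with uniformity in the initial condition coming from Lemma~\ref{semiconz}) yields
\begin{equation*}
\inf_{y\in B(z^{*},\rho)}\mathbb{P}_{y}(\tau^{N}_{O}\leq T_{0}+1)\geq \exp\{-N(\widetilde{V}+\eta/4)\}.
\end{equation*}

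Next I would combine these estimates with a standard Markov chain argument. Define stopping times $\sigma_{0}=0$ and, inductively, $\tau_{k}$ the first time after $\sigma_{k}$ that $Z^{N}$ enters $B(z^{*},\rho)$, and $\sigma_{k+1}=\tau_{k}+T$. By the law of large numbers (Theorem~\ref{LLN}), starting from any point of $O$ the deterministic flow reaches $B(z^{*},\rho/2)$ in bounded time, so, with probability tending to $1$, $\tau_{k}-\sigma_{k}$ is bounded by some deterministic $T_{1}$ on the event $\{\sigma_{k}<\tau^{N}_{O}\}$; on the complement a separate LDP argument shows the process would have had to exit anyway. Writing $\xi_{k}=\mathbf{1}_{\{\tau^{N}_{O}\in[\sigma_{k},\sigma_{k+1})\}}$ and using the strong Markov property with the two per-cycle estimates above, the number of cycles $\mathcal{N}$ before exit is stochastically sandwiched between two geometric random variables with parameters $\exp\{-N(\widetilde{V}\pm\eta/4)\}$. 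This immediately gives $\mathbb{P}_{z}(\tau^{N}_{O}>\exp\{N(\widetilde{V}+\eta)\})\to 0$ (enough trials occur before the deadline to exit) and $\mathbb{P}_{z}(\tau^{N}_{O}<\exp\{N(\widetilde{V}-\eta)\})\to 0$ (too few trials occur), which is the convergence in probability claimed in the first displayed equation.

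Finally, for the expectation, the upper bound follows from $\mathbb{E}_{z}(\mathcal{N})\leq \exp\{N(\widetilde{V}+\eta/2)\}$ together with $\mathbb{E}(\sigma_{k+1}-\sigma_{k}\mid\sigma_{k}<\tau^{N}_{O})\leq T+T_{1}$, which gives $\mathbb{E}_{z}(\tau^{N}_{O})\leq (T+T_{1})\exp\{N(\widetilde{V}+\eta/2)\}\leq\exp\{N(\widetilde{V}+\eta)\}$ for $N$ large. The lower bound uses Chebyshev:
\begin{equation*}
\mathbb{E}_{z}(\tau^{N}_{O})\geq \exp\{N(\widetilde{V}-\eta)\}\,\mathbb{P}_{z}\bigl(\tau^{N}_{O}\geq \exp\{N(\widetilde{V}-\eta)\}\bigr),
\end{equation*}
and the right factor tends to $1$ by the first part. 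The main obstacle is the uniformity of the one-cycle bounds in the starting point $y\in B(z^{*},\rho)$ and the careful handling of excursions that leave $B(z^{*},\rho)$ without exiting $O$; these are delicate but follow from Lemma~\ref{semiconz} combined with compactness of $\bar{O}$ and the lower semicontinuity of $I_{T}$ on the relevant level sets.
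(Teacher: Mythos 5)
The paper does not prove this theorem; it states that the proof is the Freidlin--Wentzell exit-time argument carried out in Section 7 of \cite{Kratz2014}, and your proposal follows exactly that strategy (per-cycle upper and lower exit estimates near the equilibrium, a cycle decomposition via the strong Markov property, geometric sandwiching of the number of attempts, and the standard expectation bounds). So in outline you are reproducing the intended proof.

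That said, your sketch passes over the points that make the cited proof long, and two of them are genuine gaps rather than routine details. First, the rates $\beta_{j}$ vanish on $\partial A$, and part of $\partial O$ may lie in $\partial A$; this is precisely why the statement involves only $\widetilde{\partial O}$. Your per-cycle lower bound must produce an \emph{open} tube of paths, around a slight extension of $\phi^{*}$ beyond $\bar{O}$, all of which exit $O$, and must verify that the extension keeps $I$ finite and close to $V_{\widetilde{\partial O}}$; near the degenerate boundary this requires the regularization machinery ($\Phi_{a}$, the sets $R^{a}$, Assumption \ref{assump1}) and is not a ``small perturbation'' in general. Second, your claim that from any $y\in O$ the deterministic flow reaches $B(z^{*},\rho/2)$ in bounded time is not uniform in $y$ near $\partial O$, and the increments $\sigma_{k+1}-\sigma_{k}$ are not deterministically bounded; the standard fix is a separate estimate showing that
\begin{equation*}
\limsup_{N\to\infty}\frac{1}{N}\log\sup_{y\in \bar{O}}\mathbb{P}_{y}\bigl(\tau^{N}_{O}\wedge \tau^{N}_{B(z^{*},\rho)}>T_{1}\bigr)<0
\end{equation*}
for $T_{1}$ large, which you allude to (``a separate LDP argument'') but do not supply; it rests on $z^{*}$ being the only invariant set of the flow in $\bar{O}$ and on the uniform upper bound over compact sets of initial conditions. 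Relatedly, the expectation upper bound needs a lower bound on the one-step exit probability that is uniform over all of $O$ (or over $\bar O$ minus a neighborhood of $\widetilde{\partial O}$), not only over $B(z^{*},\rho)$; combining the confinement estimate with your per-cycle lower bound gives this, but the combination has to be stated. With these three ingredients filled in, your argument is the proof of \cite{Kratz2014}.
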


\frenchspacing
\bibliographystyle{plain}

\end{document}